\tikzset{
	labl/.style={anchor=south, rotate=90, inner sep=.5mm}
}
\newcommand{\C}{\mathbb{C}}
\newcommand{\F}{\mathbb{F}}
\newcommand{\Fq}{\mathbb{F}_q}
\newcommand{\Fqn}{\mathbb{F}_{q^n}}
\newcommand{\Fqs}{\mathbb{F}_{q^s}}
\newcommand{\Fqsp}{\mathbb{F}_{q^{t}}}
\newcommand{\LL}{\mathbb{L}}
\newcommand{\KK}{\mathbb{K}}
\newcommand{\Q}{\mathbb{Q}}
\newcommand{\Z}{\mathbb{Z}}
\newcommand{\N}{\mathbb{N}}
\newcommand{\R}{\mathbb{R}}
\newcommand{\X}{\mathbb{X}}
\newcommand{\El}{E_\lambda}
\newcommand{\Elx}{\El^{(x_0)}}
\newcommand{\Elxp}{\El^{(x_0')}}
\newcommand{\Ql}{\mathbb{Q}_{\ell}}
\newcommand{\Qlbar}{\overline{\mathbb{Q}}_{\ell}}
\newcommand{\Qpbar}{\overline{\mathbb{Q}}_{p}}
\newcommand{\Qq}{\mathbb{Q}_{q}}
\newcommand{\Qqs}{\mathbb{Q}_{q^s}}
\newcommand{\Qqsp}{\mathbb{Q}_{q^{t}}}
\newcommand{\Qlpbar}{\overline{\mathbb{Q}}_{\ell'}}
\newcommand{\Qbar}{\overline{\mathbb{Q}}}
\newcommand{\Gm}{\mathbb{G}_m}
\newcommand{\calE}{\mathcal{E}}
\newcommand{\calF}{\mathcal{F}}
\newcommand{\calG}{\mathcal{G}}
\newcommand{\calH}{\mathcal{H}}
\newcommand{\calL}{\mathcal{L}}
\newcommand{\calO}{\mathcal{O}}
\newcommand{\calM}{\mathcal{M}}
\newcommand{\calV}{\mathcal{V}}
\newcommand{\Weil}{\mathbf{Weil}}
\newcommand{\VVec}{\mathbf{Vec}}
\newcommand{\bfC}{\mathbf{C}}
\newcommand{\Rep}{\mathbf{Rep}}
\newcommand{\Fisoc}{\mathbf{F\textrm{-}Isoc}}
\newcommand{\Isoc}{\mathbf{Isoc}}
\newcommand{\LS}{\mathbf{LS}}
\newcommand{\Coef}{\mathbf{Coef}}
\newcommand{\Foi}{\Fisoc^\textrm{\dag}}
\newcommand{\oi}{\Isoc^\textrm{\dag}}
\DeclareMathOperator{\sms}{ss}
\DeclareMathOperator{\Spec}{Spec}
\DeclareMathOperator{\Gal}{Gal}
\DeclareMathOperator{\Hom}{Hom}
\DeclareMathOperator{\Ker}{Ker}
\DeclareMathOperator{\End}{End}
\DeclareMathOperator{\Aut}{Aut}
\DeclareMathOperator{\GL}{GL}
\DeclareMathOperator{\id}{id}
\DeclareMathOperator{\DGal}{DGal}
\DeclareMathOperator{\cst}{cst}
\DeclareMathOperator{\et}{\acute{e}t}
\newcommand{\pil}{\pi_1^\lambda}
\newcommand{\pie}{\pi_1^{\et}}
\newcommand{\iso}{\xrightarrow{\sim}}
\begin{document}
	
	\newtheorem{theo}[subsubsection]{Theorem}
	\newtheorem*{theo*}{Theorem}
	\newtheorem{conj}[subsubsection]{Conjecture}
	\newtheorem{prop}[subsubsection]{Proposition}
	\newtheorem{lemm}[subsubsection]{Lemma}
	\newtheorem*{lemm*}{Lemma}
	\newtheorem{coro}[subsubsection]{Corollary}

	\theoremstyle{definition}
	\newtheorem{defi}[subsubsection]{Definition}
	\newtheorem*{defi*}{Definition}
	
	\newtheorem{rema}[subsubsection]{Remark}
	\newtheorem{exam}[subsubsection]{Example}
	\newtheorem{nota}[subsubsection]{Notation}
	\newtheorem{cons}[subsubsection]{Construction}
	\numberwithin{equation}{subsubsection}
	
	\title{The monodromy groups of lisse sheaves and overconvergent $F$-isocrystals}
	\author{Marco D'Addezio}
	\date{\today}
	
\address{Max-Planck-Institut für Mathematik, Vivatsgasse 7, 53111, Bonn, Germany}
\email{ daddezio@mpim-bonn.mpg.de}

	\begin{abstract}
		
		It has been proven by Serre, Larsen--Pink and Chin, that over a smooth curve over a finite field, the monodromy groups of compatible semi-simple pure lisse sheaves have “the same” $\pi_0$ and neutral component. We generalize their results to compatible systems of semi-simple lisse sheaves and overconvergent $F$-isocrystals over arbitrary smooth varieties. For this purpose, we extend the theorem of Serre and Chin on Frobenius tori to overconvergent $F$-isocrystals. To put our results into perspective, we briefly survey recent developments of the theory of lisse sheaves and overconvergent $F$-isocrystals. We use the Tannakian formalism to make explicit the similarities between the two types of coefficient objects. 
		
	\end{abstract}
	
	\maketitle

	\tableofcontents
	
	\section{Introduction}
	\subsection{Background}
		L.~Lafforgue in 2002 proved the Langlands reciprocity conjecture for $\GL_r$ over function fields of positive characteristic, \cite{Laf}. This result gives a correspondence between irreducible lisse $\Qlbar$-sheaves over a smooth connected curve over a finite field and cuspidal automorphic representations. The theorem has a $p$-adic counterpart recently proved by Abe in \cite{Abe}. In Abe's work lisse sheaves are replaced by \textit{overconvergent $F$-isocrystals}, previously introduced by Berthelot. The two results prove Deligne's conjecture \cite[Conjecture 1.2.10]{Weil2} for curves. The lack of a Langlands correspondence for higher dimensional varieties (even at the level of the formulation) forced one to generalize Deligne's conjecture reducing geometrically to the case of curves. One of the difficulties is that one cannot rely on a Lefschetz theorem for the étale fundamental group in positive characteristic (see for example \cite[Lemma 5.4]{Esn}). This means that in general, given a smooth variety $X_0$ over a finite field $\Fq$, it does not exist a smooth curve $C_0\subseteq X_0$ over $\Fq$ with the property that every irreducible lisse sheaf over $X_0$ remains irreducible when restricted to $C_0$. On the other hand, for a given lisse sheaf one can find a suitable smooth curve where the lisse sheaf remains irreducible, \cite{Katz}. The same property holds for overconvergent $F$-isocrystals, \cite{AE}. 
	
	\subsection{Main results}
	We refer to lisse sheaves and overconvergent $F$-isocrystals uniformly as \textit{coefficient objects} (Definition \ref{coef-ob:d}). Let $X_0$ be a smooth connected variety over $\Fq$. Suppose that $\calE_0$ is a coefficient object over $X_0$ such that all the eigenvalues of the Frobenii at closed points are algebraic numbers. Thanks to the known cases of Deligne's conjecture (Theorem~\ref{companions-smooth-theorem}), $\calE_0$ sits in an \textit{$E$-compatible system} $\{\calE_{\lambda, 0}\}_{\lambda\in \Sigma}$, where $E$ is a number field. With this we mean that there exists a set $\Sigma$ of finite places of $E$, containing all the finite places which do not divide $p$, and $\{\calE_{\lambda, 0}\}_{\lambda\in \Sigma}$ is a family of pairwise $E$-compatible $\El$-coefficient objects (Definition \ref{comp:d}), one for each $\lambda\in \Sigma$. 
	
	We use the new results presented above to study the problem of $\lambda$-independence of the \textit{monodromy groups}. Let $\F$ be an algebraic closure of $\Fq$ and $x$ an $\F$-point of $X_0$. For every $\lambda\in \Sigma$, let $G(\calE_{\lambda, 0},x)$ be the \textit{(arithmetic) monodromy group} of $\calE_{\lambda, 0}$ (Definition \ref{monodromy-groups-d}) and $\rho_{\lambda,0}$ the tautological representation of $G(\calE_{\lambda, 0},x)$. We prove the following generalization of \cite[Theorem~1.4]{Chin2}.
	
	\begin{theo}[Theorem~\ref{neutral-component-theorem}]
		\label{intro-neutral-component-theorem}	Suppose that for every $\lambda\in \Sigma$ the coefficient object $\calE_{\lambda, 0}$ is semi-simple. 
		After possibly replacing $E$ with a finite extension, there exists a connected split reductive group $G_0$ over $E$ such that for every $\lambda\in \Sigma$ the extension of scalars $G_0\otimes_E \El$ is isomorphic to the neutral component of $G(\calE_{\lambda, 0},x)$. Moreover, there exists a faithful $E$-linear representation $\rho_0$ of $G_0$ and isomorphisms $\varphi_{\lambda,0}: G_{0}\otimes_E E_{\lambda} \xrightarrow{\sim} G(\calE_{\lambda, 0},x)^{\circ}$ for every $\lambda\in \Sigma$ such that $\rho_0\otimes_E E_{\lambda}$ is isomorphic to $\rho_{\lambda,0}\circ \varphi_{\lambda,0}$.
	\end{theo}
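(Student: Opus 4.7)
The plan is to follow the Frobenius-torus strategy of \cite{Chin2}, adapted to compatible systems of coefficient objects. First, I would invoke Theorem~\ref{intro-Larsen-Pink-theorem} to reduce to the case in which $G(\calE_{\lambda,0},x)$ is already connected for every $\lambda\in\Sigma$: the finite étale cover of $X_0$ trivializing the common group of connected components works simultaneously for all $\lambda$. From that point on the task is to produce a single split connected reductive $E$-group whose base changes recover these neutral components, together with a compatible faithful representation.

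The key input is the extension of Serre and Chin's Frobenius torus theorem to overconvergent $F$-isocrystals, announced in the introduction as one of the main new results of this paper. Applied to the compatible system $\{\calE_{\lambda,0}\}_{\lambda\in\Sigma}$, it produces a closed point $x_0$ of $X_0$ such that, simultaneously for all $\lambda$, the Zariski closure of the semi-simple part of $F_{x_0}$ is a maximal torus $T_{x_0,\lambda}\subseteq G(\calE_{\lambda,0},x)$. Since the characteristic polynomial of $F_{x_0}$ on $\rho_{\lambda,0}$ has coefficients in $E$, after enlarging $E$ to contain its roots and enough roots of unity, the pair $(T_{x_0,\lambda},\rho_{\lambda,0}|_{T_{x_0,\lambda}})$ descends to a split $E$-torus $T_0$ with a faithful $E$-linear representation $\rho_{T_0}\colon T_0\to \GL(V_0)$, and this descent is canonically independent of $\lambda$.

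Next I would verify $\lambda$-independence of the full root datum of $(G(\calE_{\lambda,0},x),T_{x_0,\lambda})$. The weights of $T_{x_0,\lambda}$ on $\rho_{\lambda,0}$ are controlled entirely by $(T_0,\rho_{T_0})$, hence $\lambda$-independent; using the Tannakian formalism emphasized throughout the paper the same holds for all tensor constructions, and in particular for the adjoint representation on $\mathfrak{g}_\lambda$, whose non-zero weights are the roots. That the coroots are also $\lambda$-independent can be extracted from the Frobenius torus theorem together with a standard auxiliary-representation argument. With a common root datum in hand, Chevalley's classification of split connected reductive groups over $E$ yields $G_0$ containing $T_0$ as a maximal split torus, together with a faithful representation $\rho_0$ of $G_0$ extending $\rho_{T_0}$.

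To conclude, for each $\lambda\in\Sigma$ the pairs $(G_0\otimes_E E_{\lambda},\rho_0\otimes_E E_{\lambda})$ and $(G(\calE_{\lambda,0},x),\rho_{\lambda,0})$ are connected split reductive groups that share a common maximal torus $T_0\otimes_E E_\lambda=T_{x_0,\lambda}$, the same root datum, and a representation with identical weights and multiplicities; the uniqueness in Chevalley's classification then produces the isomorphism $\varphi_{\lambda,0}$ intertwining the two representations, well defined up to an inner automorphism of the target. The main obstacle in this program is the Frobenius torus theorem for overconvergent $F$-isocrystals invoked in the second paragraph, which lies genuinely outside the $\ell$-adic setting and is exactly the new technical input of this paper; once that is granted, the remaining steps amount to recasting Chin's original descent in Tannakian language and extending it from curves to arbitrary smooth varieties.
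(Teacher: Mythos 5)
Your opening is exactly the paper's: reduce to the connected case via Theorem~\ref{Larsen-and-Pink-theorem} and Proposition~\ref{connected-after-cover-proposition}, then use Theorem~\ref{max-tori:t} to pick an $\F$-point whose Frobenius torus is simultaneously maximal in every $G(\calE_{\lambda,0},x)$, and observe that this torus together with the restriction of $\rho_{\lambda,0}$ descends to a common split $E$-torus $T_0$. The divergence is in the endgame, and that is where the real work lies.

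The paper does not try to read off the root datum directly. Instead it constructs, via Proposition~\ref{consequence-companions-proposition}, an isomorphism of Grothendieck semirings $K^+(\calE_{\lambda,0})\iso K^+(\calE_{\mu,0})$ that preserves characteristic polynomial functions, checks that it is compatible with restriction to $K^+(T_0)$, and then applies Chin's reconstruction theorem (Theorem~\ref{Chin-theorem}) as a black box to upgrade the semiring isomorphism to an isomorphism of split reductive groups respecting the chosen maximal tori and the tautological representations. Your proposal instead tries to match the root data by hand, and here there are two genuine gaps. First, you claim the roots are $\lambda$-independent because ``the Tannakian formalism'' identifies the adjoint representations across $\lambda$: but $\mathfrak{g}_\lambda$ is not a fixed tensor construction in $\rho_{\lambda,0}$ — it is a particular subrepresentation of $\rho_{\lambda,0}\otimes\rho_{\lambda,0}^\vee$ whose identification as a summand depends on the group structure you are trying to determine. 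To know that the same summand of $\calE_{\lambda,0}\otimes\calE_{\lambda,0}^\vee$ plays the role of $\mathfrak{g}_\lambda$ for every $\lambda$, you need precisely the kind of $\lambda$-independence of the full semiring (irreducible summands and their Frobenius eigenvalues) that Proposition~\ref{consequence-companions-proposition} supplies; invoking ``Tannakian formalism'' does not bridge this. Second, and more seriously, you dispose of the coroots with ``a standard auxiliary-representation argument.'' That step is not standard: a split connected reductive group is not determined by its character group, its Weyl group, and its roots in $X^*(T)$ (witness the Langlands-dual pairs such as $\mathrm{SO}_{2n+1}$ and $\mathrm{Sp}_{2n}$). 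Extracting the coroots from the representation-theoretic data at hand is exactly the nontrivial content of Chin's theorem, and sketching a bespoke argument for it without engaging with the actual mechanism — the compatibility of $K^+(G)\to K^+(T)$ — leaves a hole at the heart of the proof. In short, the Frobenius-torus preliminaries and the descent of $T_0$ to $E$ are right, but your endgame is an informal re-derivation of Theorem~\ref{Chin-theorem}, and the two places where that theorem is genuinely hard are the two places your sketch is vague.
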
 
	Note that in Theorem \ref{intro-neutral-component-theorem} we remove from [\textit{ibid.}, Theorem~1.4] the purity and $p$-plain assumptions (cf. §\ref{rational:d}) and we extend it to overconvergent $F$-isocrystals. Chin proves his result exploiting a reconstruction theorem for connected split reductive groups (Theorem \ref{Chin-theorem}). To apply his theorem, he extends the work of Serre in \cite{Serre} on \textit{Frobenius tori} of étale lisse sheaves, \cite[Lemma 6.4]{Chin2}. We further generalize Chin's result on Frobenius tori.
	
	\begin{theo}[Theorem \ref{max-tori:t}] \label{intro-max-tori-LP:t}
	Let $\calE_0$ be an algebraic coefficient object over $X_0$. There exists a Zariski-dense subset $\Delta\subseteq X(\F)$ such that for every $\F$-point $x\in \Delta$ and every object $\calF_0\in \langle \calE_0 \rangle$, the Frobenius torus $T(\calF_0,x)$ is a maximal torus of $G(\calF_0,x)$. Moreover, if $\calG_0$ is a coefficient object which is compatible with $\calE_0$, the subset $\Delta$ satisfies the same property for every object in $\langle \calG_0 \rangle$.
\end{theo}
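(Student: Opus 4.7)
The plan is to reduce to the curve case via the dominant curve theorem, apply Chin's theorem there, and then transfer the result through compatible systems using the invariance of Frobenius characteristic polynomials. By the Tannakian formalism, for every $\calF_0 \in \langle \calE_0 \rangle$ the monodromy group $G(\calF_0, x)$ is a quotient of $G(\calE_0, x)$ and the quotient morphism sends $T(\calE_0, x)$ onto $T(\calF_0, x)$. Since the image of a maximal torus under a faithfully flat morphism of algebraic groups is again a maximal torus, it suffices to produce a Zariski-dense subset $\Delta \subseteq X(\F)$ such that $T(\calE_0, x)$ is a maximal torus of $G(\calE_0, x)$ for every $x \in \Delta$, which in turn will follow if $\Delta$ meets every non-empty open $U_0 \subseteq X_0$.

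Fix such a $U_0$. By Theorem~\ref{dom-curve-theorem} I obtain a smooth curve $C_0$ with a morphism $\iota: C_0 \to U_0$ for which the neutral components of the geometric monodromy groups of $\iota^*\calE_0$ and $\calE_0$ agree, so that the two groups share the same rank. At any $y \in C_0(\F)$ the Frobenius torus of $\iota^*\calE_0$ equals $T(\calE_0, \iota(y))$, since both are defined intrinsically from the semisimple part of Frobenius acting on the common stalk. Applying Chin's theorem on curves \cite[Lemma~6.4]{Chin2} to $\iota^*\calE_0$ produces a Zariski-dense set of closed points of $C_0$ at which this Frobenius torus is maximal in $G(\iota^*\calE_0, y)$; the rank equality then forces maximality in the a priori larger group $G(\calE_0, \iota(y))$, yielding the desired element of $\Delta \cap U_0(\F)$.

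For the final assertion about a compatible $\calG_0$, the definition of compatibility says the characteristic polynomials of Frobenius at every closed point agree, so under a fixed embedding of a common number field the semisimple Frobenii have matching eigenvalues and $T(\calE_0, x)$ is canonically identified with $T(\calG_0, x)$. Since $\lambda$-independence of the rank of the monodromy group also holds across compatible systems (through the reductive quotient and Theorem~\ref{intro-neutral-component-theorem}), maximality is preserved, and the same $\Delta$ handles $\langle \calG_0 \rangle$ after repeating the Tannakian reduction of the first paragraph with $\calG_0$ in place of $\calE_0$.

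The main obstacle is invoking Chin's theorem when $\calE_0$ is an overconvergent $F$-isocrystal, as Chin's original argument treats only $\ell$-adic lisse sheaves. I expect this to be handled either by a direct Chebotarev-type argument in the $F$-isocrystal setting, exploiting the comparison with the étale fundamental group developed in §\ref{comparison-etale:ss}, or by invoking étale companions of $\iota^*\calE_0$ on $C_0$ and transferring via the Frobenius-torus identification used in the compatibility step.
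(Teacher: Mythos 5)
Your Tannakian reduction to $\calF_0 = \calE_0$ in the first paragraph matches the paper's opening step and is fine. Beyond that, however, the proposal has two significant problems.

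First, the compatibility step is circular. You appeal to Theorem~\ref{intro-neutral-component-theorem} (the $\lambda$-independence of the neutral component) to transfer maximality of the Frobenius torus from $\calE_0$ to a compatible $\calG_0$, but the proof of Theorem~\ref{intro-neutral-component-theorem} given in the paper \emph{depends on} the Frobenius-torus statement you are trying to establish (it is invoked there to produce a common maximal torus before applying Chin's reconstruction theorem). The non-circular substitute the paper uses is Proposition~\ref{max-tori-LP:p}: the reductive ranks of $G(\calE_0,x)$ and $G(\calG_0,x)$ agree because the two groups have the same dimension data, by Larsen--Pink \cite[Proposition 1]{LP1} applied via Lemma~\ref{dimension-data:l}. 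This yields the required rank equality without any appeal to the neutral-component theorem.

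Second, the curve reduction via Theorem~\ref{dom-curve-theorem} combined with Chin's curve result \cite[Lemma 6.4]{Chin2} does not resolve, and in fact concentrates, the main difficulty. Chin's lemma is for (pure, $p$-plain) \'etale lisse sheaves and does not apply to overconvergent $F$-isocrystals; moreover, the dominant-curve theorem for $F$-isocrystals as stated requires $\iota$-purity, whereas a general semi-simple $\calE_0$ is only a sum of $\iota$-pure pieces. The paper avoids the curve reduction entirely. Its key new ingredient is Corollary~\ref{finiteness-conj-Frob:c}: the set of $\GL_r$-conjugacy classes of Frobenius tori of an \emph{algebraic} coefficient object on an arbitrary smooth variety is finite. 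This is obtained by injecting the set of conjugacy classes into the set of $\R$-linear subspaces of $X_*(\Gm^r)_\R$ spanned by valuation vectors of Frobenius eigenvalues (Proposition~\ref{injective-map:p}) and proving the finiteness of the relevant set of valuations $V(\calE_0)$ (Proposition~\ref{valuations-finite:p}, which uses Deligne's conjectures via Theorem~\ref{Deligne-conjecture-general-theorem} and \cite[Lemma-Definition 4.3.2]{Ked}). With this finiteness in hand, the paper uses companions (Theorem~\ref{companions-smooth-theorem} plus Lemma~\ref{almost-etale:l}) to produce an \'etale lisse sheaf $\calV_0$ compatible with $\calE_0$, passes to a connected cover, and runs \v{C}ebotarev \`a la Serre on $\calV_0$; the Larsen--Pink dimension-data argument then transfers maximality back to $\calE_0$ and any compatible $\calG_0$ simultaneously. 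Your sketch, by gesturing at ``a direct Chebotarev-type argument'' or ``invoking \'etale companions,'' leaves out precisely this finiteness result for Frobenius tori of general algebraic coefficient objects — which is the mathematical content of the theorem beyond the \'etale case — and replaces the Larsen--Pink rank-independence input with the circular reference noted above.
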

	
	To prove the theorem, we extend first Serre--Chin result on the finiteness of conjugacy classes of Frobenius tori (Corollary \ref{finiteness-conj-Frob:c}). To do this we exploit the known cases of Deligne's conjecture. Using Serre's technique, this allows to prove Theorem \ref{intro-max-tori-LP:t} for algebraic étale lisse sheaves. In order to extend Theorem \ref{intro-max-tori-LP:t} to overconvergent $F$-isocrystals and to Weil lisse sheaves that are not étale, we use a \textit{dimension data} argument due to Larsen and Pink (Proposition \ref{max-tori-LP:p}). Thanks to Theorem \ref{intro-max-tori-LP:t}, we are able to prove Theorem \ref{intro-neutral-component-theorem} following Chin's method. Theorem \ref{intro-max-tori-LP:t} is also used in \cite{AD18} as a starting point to prove a certain rigidity result for \textit{convergent $F$-isocrystals} which admit an overconvergent extension. An additional outcome of Theorem \ref{intro-max-tori-LP:t} is provided by the following semi-simplicity statement for the Frobenii at closed points.
	
	\begin{coro}[Corollary \ref{semi-simple-at-points:c}]
		\label{intro-semi-simple-at-points:c}
Let $\calE_0$ be a semi-simple $\Qlbar$-coefficient object. The set of closed points $x_0'$ of $X_0$ where the Frobenius $F_{x'_0}$ is regular semi-simple\footnote{Cf. \cite[§12.2]{Bor91}.} is Zariski-dense in $X_0$.
	\end{coro}
	
In the article, we also generalize the result of Serre and Larsen--Pink on the $\lambda$-independence of the $\pi_0$ of the monodromy groups.
	
	\begin{theo}[Proposition~\ref{connected-components-etale-proposition} and Theorem~\ref{Larsen-and-Pink-theorem}]
		\label{intro-Larsen-Pink-theorem}
	For every $\El$-coefficient object $\calE_0$, the finite group schemes $\pi_0(G(\calE_0,x))$ and $\pi_0(G(\calE,x))$ are constant group schemes. If $\calF_0$ is a coefficient object compatible with $\calE_0$, then there exist canonical isomorphisms $\pi_0(G(\calE_0,x))\iso \pi_0(G(\calF_0,x))$ and $\pi_0(G(\calE,x))\iso \pi_0(G(\calF,x))$ as abstract finite groups.
	\end{theo}
	
	To prove such a theorem for overconvergent $F$-isocyrstals we have to relate their monodromy groups with the étale fundamental group of $X_0$. This is done in §\ref{comparison-etale:ss} and it relies on some previous work done by Crew in \cite{CrewMon}. The rest of the proof follows \cite[Proposition 2.2]{LP5}. Finally, we obtain an independence result for the “Lefschetz theorem” for coefficient objects.
	\begin{theo}
	\label{intro-isoindipendent:t}Let $f_0:(Y_0,y)\to (X_0,x)$ be a morphism of smooth geometrically connected pointed varieties. Let $\calE_0$ and $\calF_0$ be compatible  geometrically semi-simple coefficient objects over $X_0$. Let $\varphi_0: G(f_0^*\calE_0,y)\to G(\calE_0,x)$ and $\psi_0:G(f_0^*\calF_0,y)\to G(\calF_0,x)$ be the morphisms induced by $f_0^*$ and let $\varphi$ and $\psi$ be their restriction to the geometric monodromy groups.
	\begin{itemize}
		\item[{(i)}]If $\varphi$ is an isomorphism, the same is true for $\psi$.
		\item[{(ii)}] If $\varphi_0$ is an isomorphism, the same is true for $\psi_0$.
		
	\end{itemize}
	
\end{theo}	 
	\subsection{Relations with previous works}
	In \cite[Theorem~8.23]{Pal}, Pál gives a proof of a special case of Theorem~\ref{intro-neutral-component-theorem} for curves. It relies on a strong Chebotarev's density theorem for overconvergent $F$-isocrystals [\textit{ibid.}, Theorem~4.13], which is now proven in \cite{HP}. Using the result on Frobenius tori, we do not use Hartl--Pál's theorem. It is also worth mentioning that Drinfeld proved the independence of the entire arithmetic monodromy groups (not only the neutral component) over $\Qlbar$, \cite{Dri2}. He uses a stronger representation-theoretic reconstruction theorem (see Remark~\ref{work-drinfeld-remark}). With Frobenius tori, we prove in addition the existence of a number field $E$ such that the algebraic group $G(\calE_{\lambda, 0},x)$ is split reductive over $\El$ for every $\lambda\in \Sigma$.

	\subsection{The structure of the article}
	We define in §\ref{coef-objects-ss} the categories of coefficient objects and geometric coefficient objects, and we prove some basic results. We also recall some definitions related to the characteristic polynomials of the Frobenii at closed points, and we show that the property of a Weil lisse sheaf of being étale can be checked looking at one closed point (Proposition \ref{p-plain-etale:p}). 
	
	In §\ref{monodromy-groups-ss}, we define the arithmetic and the geometric monodromy groups of coefficient objects, using the Tannakian formalism. We also introduce the Tannakian fundamental groups classifying coefficient objects and geometric coefficient objects. We present a \textit{fundamental exact sequence} relating these groups (Proposition~\ref{fun-exact-seq:p}). The result is essentially entirely proven in the appendix for general \textit{neutral Tannakian categories with Frobenius}. Then in §\ref{comparison-etale:ss} we show that the groups of connected components of these fundamental groups are isomorphic to the arithmetic and the geometric étale fundamental group (Proposition~\ref{connected-components-etale-proposition}). We also prove a complementary result, namely Proposition \ref{connected-after-cover-proposition}.
	
	 In §\ref{rank1:ss} we recall the main result on rank 1 coefficient objects (Theorem \ref{rank-1-finite-order:t}). We introduce in §\ref{t-c:d} the notion of \textit{twist classes} and we prove some structural properties for them. In §\ref{Weights-ss} we recollect some theorems from Weil II that are now known for coefficient objects of both types. For example, we recall the main theorem on weights (Theorem \ref{theory-of-weights-theorem}). Then we present in §\ref{Deligne-conjectures-ss} the state of Deligne's conjectures. In §\ref{compatible-systems-ss} we give the definition of compatible systems of lisse sheaves and overconvergent $F$-isocrystals and we present a strong form of the companions conjecture, due to the work of Chin (Theorem~\ref{companions-smooth-theorem}).
	
	In §\ref{independence-of-monodromy-section}, we investigate the properties of $\lambda$-independence of the monodromy groups varying in a compatible system of coefficient objects. We start by proving in §\ref{the-group-of-ss} the $\lambda$-independence of the groups of connected components (Theorem \ref{intro-Larsen-Pink-theorem}). In §\ref{Frobenius-tori:ss} we extend the theory of Frobenius tori to algebraic coefficient objects and we prove Theorem \ref{intro-max-tori-LP:t} and Corollary \ref{intro-semi-simple-at-points:c}. In §\ref{the-neutral-component-ss} we prove instead Theorem~\ref{intro-neutral-component-theorem}. Finally, in §\ref{Lefschetz-theorem:ss} we prove Theorem \ref{intro-isoindipendent:t}.

	\subsection{Acknowledgements}
	I am grateful to my advisors Hélène Esnault and Vasudevan Srinivas for the helpful discussions. In particular, I thank Hélène Esnault for having spotted a gap in the original proof of Theorem \ref{isoindipendent:t}. I have also greatly benefited from many conversations with Emiliano Ambrosi on the
	contents of the article. I thank Abe Tomoyuki, who has kindly answered my questions on isocrystals and for all the valuable comments on a draft of this text. I thank Anna Cadoret, Chun Yin Hui, and Kiran Kedlaya for feedback. Finally, I thank the referee for his/her detailed comments and useful suggestions.

	\section{Notation and conventions}
	\subsubsection{}
	
	\label{varieties}
	We fix a prime number $p$ and a positive power $q$. Let $\Fq$ be a field with $q$ elements and $\F$ an algebraic closure of $\Fq$. For every positive integer $s$ we denote by $\Fqs$ the subfield of $\F$ with $q^s$ elements. We say that a separated scheme of finite type over a field $k$ is a \textit{variety} over $k$. When we do not specify $k$ it means that we take $\Fq$ as a base field. Note that for us a variety is not necessarily irreducible. A \textit{curve} for us is a one dimensional variety. We will mostly denote by $X_0$ a smooth variety over $\Fq$. When $X_0$ is connected we will sometimes consider $X_0$ as a variety over $k_{X_0}$, the algebraic closure of $\Fq$ in $\Gamma(X_0,\calO_{X_0})$. We denote by $X$ the extension of scalars $X_0\otimes_{\Fq}\F$ over $\F$. In general, we denote with a subscript $_0$ objects and morphisms defined over $\Fq$, and the suppression of the subscript will mean the base change to $\F$.  
	
	We write $|X_0|$ for the set of closed points of $X_0$. If $x_0$ is a closed point of $X_0$, the degree of $x_0$ is defined to be $\deg(x_0):=[\kappa(x_0):\Fq]$. A variety is said ($\F$-)\textit{pointed} if it is endowed with the choice of an $\F$-point. A \textit{morphism of pointed varieties} $(Y_0,y)\to (X_0,x)$ is a morphism of varieties $Y_0\to X_0$ which sends $y$ to $x$. An $\F$-point $x$ of $X_0$ determines a unique closed point of the variety that we denote by $x_0$. Moreover, the $\F$-point $x$ determines an identification $k_{X_0}=\Fqs$, for some $s\in \Z_{>0}$.
	
	\subsubsection{}
	\label{fields}
	The letter $\ell$ will denote a prime number. In general we allow $\ell$ to be equal to $p$. We fix an algebraic closure of $\Q$ and for every $\ell$ an algebraic closure of $\Ql$. For a number field $E$, we write $|E|_\ell$ for the set of finite places of $E$ dividing $\ell$. We also write $|E|_{\neq p}$ for the union $\bigcup_{\ell \neq p} |E|_\ell$ and $|E|$ for $\bigcup_{\ell} |E|_\ell$. If $\lambda\in |E|$, we denote by $E_{\lambda}$ the $\lambda$-adic completion of $E$ in $\Qlbar$. For a characteristic $0$ field $\KK$, we say that an element $a\in \KK$ is an \textit{algebraic number} if it is algebraic over $\Q$. If $a$ is an algebraic number we say that it is \textit{$p$-plain}\footnote{This is an abbreviation for the expression \textit{plain of characteristic $p$} in \cite{Chin2}.} if it is an $\ell$-adic unit for every $\ell\neq p$. 
	
	\subsection{Tannakian categories and group schemes}
		We will extensively make use the theory of Tannakian categories as presented in \cite{DM}.
	\subsubsection{}
	Let $\KK$ be a field. For every Tannakian category $\bfC$ over $\KK$, we say that an object in $\bfC$ is a \textit{trivial object} if it is isomorphic to $\mathbbm{1}^{\oplus n}$ for some $n\in \N$. We say that an object $V\in\bfC$ is \textit{irreducible} if the only subobjects of $V$ are $0$ and $V$ itself. We say that $V\in\bfC$ is \textit{absolutely irreducible} if for ever finite extension $\LL/\KK$, the extension of scalars $V\otimes_\KK \LL$ is irreducible. A \textit{Tannakian subcategory} of $\bfC$, for us, is a strictly full abelian subcategory, closed under $\otimes$, duals, subobjects (and thus quotients). If $V$ is an object of $\bfC$, we denote by $\langle V\rangle$ the smallest Tannakian subcategory of $\bfC$ containing $V$. We write $\VVec_\KK$ for the Tannakian category of finite dimensional $\KK$-vector spaces.	
	
	\subsubsection{}
	If $\omega$ is a fibre functor of $\bfC$, over an extension $\LL$, we say that the affine group scheme $\underline{\Aut}^{\otimes}(\omega)$ over $\LL$ is the \textit{Tannakian group} of $\bfC$ with respect to $\omega$. We say that the Tannakian group of $\langle V\rangle$ with respect to $\omega$ is the \textit{monodromy group} of $V$ (with respect to $\omega$). If the monodromy group of $V$ is finite, we say that $V$ is a \textit{finite object}.
	
	\subsubsection{}
	For every group scheme $G$, we denote by $\pi_0(G)$ the \textit{group of connected components} of $G$ and by $G^{\circ}$ the \textit{neutral component} of $G$. If $G$ is an algebraic group, the \textit{reductive rank} of $G$ is the dimension of any maximal torus of $G$. When $\KK$ is a characteristic $0$ field, we will say that a morphism $\varphi:G\to H$ of affine group schemes over $\KK$ is \textit{surjective} if it is faithfully flat and we will say that $\varphi$ is \textit{injective} if it is a closed immersion.

	\subsection{Weil lisse sheaves}
	We mainly use the notations and conventions for lisse sheaves as in \cite{Weil2}.
	\subsubsection{}
	If $X$ is a scheme and $x$ is a geometric point of $X$, we denote by $\pi_1^{\et}(X,x)$ the étale fundamental group of $X$. For a finite extension $k/\Fq$ with algebraic closure $\overline{k}$, we say that the inverse of the $q^{[k:\Fq]}$-power Frobenius of $\overline{k}$ is the \textit{geometric Frobenius of $k$} (with respect to $\overline{k}$). We denote by $F$ the geometric Frobenius of $\Fq$ with respect to $\F$. For every $n\in \Z_{>0}$ we denote by $W(\F/\F_{q^n})$ the Weil group of $\F_{q^n}$ (it is generated by $F^n$). We also denote by $W(X_0,x)$ the Weil group of $X_0$. For every closed point $x_0'$ of $X_0$ in the same connected component of $x$ we denote by $F_{x_0'}\subseteq W(X_0,x)$ the conjugacy class of the geometric Frobenius at $x_0'$, as in \cite[§1.1.8]{Weil2}.
	\subsubsection{}
	\label{Weil-forg}
	For every $\ell\neq p$ we have a category $\LS(X,\Qlbar)$ of \textit{lisse $\Qlbar$-sheaves} over $X$, that is the $2$-colimit of the categories $\LS(X,\El)$ of \textit{lisse $\El$-sheaves}, where $\El$ varies among the finite extensions of $\Ql$ in $\Qlbar$ (see \cite[§1.1.1]{Weil2} for more details). If $X_0$ is not geometrically connected over $\Fq$ these categories are not Tannakian (the unit object has too many endomorphisms). If $x$ is a geometric point of $X_0$, we denote by $X^{(x)}$ the connected component of $X$ containing $x$. The categories $\LS(X^{(x)},\El)$ and $\LS(X^{(x)},\Qlbar)$ are neutral Tannakian categories. 
	
	For a lisse $\Qlbar$-sheaf $\calV$ over $X$, an \textit{$n$-th Frobenius structure} on $\calV$ is an isomorphism $(F^n)^*\calV\iso\calV$ with $F$ the geometric Frobenius of $X$. The category of \textit{Weil lisse $\El$-sheaves} over $X_0$, denoted by $\Weil(X_0,\El)$, is defined to be the category of lisse $\El$-sheaves equipped with a ($1$-st) Frobenius structure. We will often refer to Weil lisse sheaves simply as \textit{lisse sheaves}. If $X_0$ is connected, the category $\Weil(X_0,\El)$ is a neutral Tannakian category. 
	
	For every geometric point $x$ of $X_0$ and every $\El$ we have a functor $$\Psi_{x,\El}:\Weil(X_0,\El)\to\LS(X,\El)\to \LS(X^{(x)},\El)$$
	obtained by firstly forgetting the Frobenius structure and then taking the inverse image with respect to the open immersion $X^{(x)}\hookrightarrow X$. If $\calV_0$ is a Weil lisse sheaf, we remove the subscript $_0$ to indicate the lisse sheaf $\Psi_{x,\El}(\calV_0)$.
	
	\subsubsection{}
	\label{Weil-rep}
	The choice of a geometric point $x$ of $X_0$, induces an equivalence between the category of Weil lisse $\Qlbar$-sheaves over $X_0$ and the finite-dimensional continuous $\Qlbar$-representations of the Weil group $W(X_0,x)$, \cite[1.1.12]{Weil2}. We say that a Weil lisse sheaf is an \textit{étale lisse sheaf} if the associated representation of the Weil group factors through the \'etale fundamental group.

	\subsubsection{}
	Notation as in §\ref{Weil-rep}. If $\calV_0$ is a Weil lisse $\El$-sheaf, for every closed point $x_0'\in |X_0|$ we write $P_{x_0'}(\calV_0,t)$ for $\det(1-tF_{x_0'}|\calV_{x})\in \El[t]$ (cf. \cite[1.1.8]{Weil2}). We say that the polynomial $P_{x_0'}(\calV_0,t)$ is the \textit{(Frobenius) characteristic polynomial} of $\calV_0$ at $x_0'$. For every natural number $n$, a lisse $\Qlbar$-sheaf is said to be \textit{pure} of \textit{weight} $n$, if for every closed point $x_0'$ of $X_0$, the eigenvalues of the elements in $F_{x_0'}$ are algebraic numbers and all the conjugates have complex absolute value $(\#\kappa(x_0'))^{n/2}$. If $\iota:\Qlbar\iso \C$ and $w$ is a real number, we say that a lisse sheaf is \textit{$\iota$-pure} of \textit{$\iota$-weight }$w$ if for every closed point $x_0'$ of $X_0$ the eigenvalues of $F_{x_0'}$, after applying $\iota$, have complex absolute value $(\#\kappa(x_0))^{w/2}$. Moreover, we say that a lisse $\Qlbar$-sheaf is \textit{mixed} (resp. \textit{$\iota$-mixed}) if it admits a filtration of lisse $\Qlbar$-sheaf with pure (resp. $\iota$-pure) successive quotients.
		\newpage
	\subsection{Overconvergent \texorpdfstring{$F$-}\ isocrystals }
	\label{notation-isocrystals}
	\subsubsection{}
	\label{p-adic-admissible}
	Let $k$ be a perfect field. We denote by $W(k)$ the ring of $p$-typical Witt vectors over $k$ and by $K(k)$ its fraction field. For every $s\in \Z_{>0}$, we denote by $\Z_{q^s}$ the ring of Witt vectors over $\Fqs$ and by $\Qqs$ its fraction field. We suppose chosen compatible inclusions $\Qqs \hookrightarrow \Qpbar$.

	Let $X_0$ be a smooth variety over $k$, we denote by $\oi(X_0/K(k))$ the category of Berthelot's \textit{overconvergent isocrystals} of $X_0$ over $K(k)$. See \cite{Ber} for a precise definition and \cite{Crew} or \cite{KedIsoc} for a shorter presentation. The category $\oi(X_0/K(k))$ is a $K(k)$-linear rigid abelian $\otimes$-category with unit object $\calO_{X_0}^\dag$, denoted by $K(k)_{X_0}$. The endomorphism ring of $K(k)_{X_0}$ is isomorphic to $K(k)^s$, where $s$ is the number of connected components of $X_0\otimes_k \overline{k}$.

	\subsubsection{}
	We will recall now the notation for the extension of scalars and the Frobenius structure of overconvergent isocrystals. We mainly refer to \cite[§1.4]{Abe}. For every finite extension $K(k) \hookrightarrow \KK$ we denote by $\oi(X_0/K(k))_\KK$ the category of \textit{$\KK$-linear overconvergent isocrystals} of $X_0$ over $K(k)$, namely the category of pairs $(\calM,\gamma)$, where $\calM\in \oi(X_0/K(k))$ and $\gamma: \KK\to \End(\calM)$ is a \textit{$\KK$-structure} (cf. \cite[§1.4.1]{Abe}). We will often omit $\gamma$ in the notation. If $\KK\subseteq \LL$ are finite extensions of $K(k)$ there exists a functor of extension of scalars $$(-)\otimes_\KK \LL:\ \oi(X_0/K(k))_\KK\to \oi(X_0/K(k))_\LL.$$ The category $\oi(X_0/K(k))_\KK$ is a $\KK$-linear rigid abelian $\otimes$-category. When $X_0$ is geometrically connected over $k$, the endomorphism ring of the unit object $\KK_{X_0}:=K(k)_{X_0}\otimes \KK$ is isomorphic to $\KK$.
	
	\subsubsection{}
	\label{inverse-image}
	
	Let us fix our notation on the inverse image functor for overconvergent isocrystals with $\KK$-structure. Let $X_0/\Fqs$ and $Y_0/\Fqsp$ be smooth varieties with $t\geq s$ and let us write $f_0:Y_0/\Fqsp\to X_0/\Fqs$ for a morphism of schemes $f_0:Y_0\to X_0$ which makes the following diagram commutative
	\begin{center}
		\begin{tikzcd}
			Y_0 \arrow{r}{f_0} \arrow[d] & X_0\arrow[d]\\
			\Spec(\Fqsp)  \arrow[r] & \Spec(\Fqs).\
		\end{tikzcd}
	\end{center}
We have an inverse image functor $\widetilde{f_0^*}:\oi(X_0/\Qqs) \to \oi(Y_0/\Qqsp)$ defined in \cite[2.3.2.(iv)]{Ber}. If $\calM$ is an object in $\oi(X_0/\Qqs)$, the overconvergent isocrystal $\widetilde{f_0^*}\calM$ has a canonical $\Qqsp$-structure as an object in $\oi(Y_0/\Qqsp)$. For every finite extension $\Qqsp\subseteq \KK$, the previous functor extends to a $\otimes$-functor $f_0^*:\oi(X_0/\Qqs)_\KK\to \oi(Y_0/\Qqsp)_\KK$ in a natural way. Let us briefly describe it. If $(\calM,\gamma)$ is an object in $\oi(X_0/\Qqs)_\KK$, then $\widetilde{f_0^*}\calM$ is endowed with the $\Qqsp\otimes_{\Qqs}\Qqsp$-structure obtained by making the first copy of $\Qqsp$ acting on $\widetilde{f_0^*}\calM$ via the canonical $\Qqsp$-structure mentioned above and the second copy acting via the restriction of $\widetilde{f_0^*}\gamma$ to $\Qqsp$. The inverse image $f_0^*(\calM,\gamma)$ is then defined to be $(\widetilde{f_0^*}\calM \otimes_{(\Qqsp\otimes_{\Qqs}\Qqsp)} \Qqsp,\widetilde{f_0^*}\gamma\otimes \id_{\Qqsp}).$ 
	
	\subsubsection{}
	
	We write $F: X_0\to X_0$ for the $q$-power Frobenius\footnote{Note that the letter $F$ will denote two different types of Frobenius endomorphisms, depending if we are working with lisse sheaves or isocrystals.}. Let $\KK$ be a finite field extension of $\Qq$. For every $\calM\in\oi(X_0/\Qq)_\KK$ and $n\in \Z_{>0}$, an \textit{$n$-th Frobenius structure} of $\calM$ is an isomorphism between $(F^n)^*\calM$ and $\calM$. We denote by $\Foi(X_0/\Qq)_\KK$ the category of\textit{ overconvergent $F$-isocrystals with $\KK$-structure}, namely the category of pairs $(\calM, \Phi)$ where $\calM\in \oi(X_0/\Qq)_\KK$ and $\Phi$ is a \textit{($1$-st) Frobenius structure} of $\calM$. For every positive integer $n$, the isomorphism $$\Phi_n:=\Phi\circ F^* \Phi \circ \cdots \circ (F^{n-1})^*\Phi$$ will be the \textit{$n$-th Frobenius structure} of $(\calM, \Phi)$. The category $\Foi(X_0/\Qq)_\KK$ is a $\KK$-linear rigid abelian $\otimes$-category. In this case, even if $X_0$ is connected but not geometrically connected over $\Fq$, the ring of endomorphisms of the unit object is isomorphic to $\KK$.
	
	When $X_0$ is a smooth variety over $\Fqs$, for every finite extension $\Qqs\subseteq \KK$, the category of $\KK$-linear isocrystals over $\Qqs$ with $s$-th Frobenius structure is equivalent to the category $\Foi(X_0/\Qq)_\KK$ (see \cite[Corollary 1.4.11]{Abe}). We will use this equivalence without further comments.
	
	\subsubsection{}
	\label{psi-iso}
	The functors of extension of scalars and of inverse image for overconvergent isocrystals with $\KK$-structure extend in the obvious way to overconvergent $F$-isocrystals. If $(X_0,x)$ is a smooth pointed variety over $\Fq$ geometrically connected over $\Fqs$ and $\KK$ is a finite extension of $\Qqs$, the morphism $f_0:X_0/\Fqs\to X_0/\Fq$ which is the identity on $X_0$ induces a functor $$\Psi_{x,\KK}:\Foi(X_0/\Qq)_\KK\to \oi(X_0/\Qqs)_{\KK}$$ that sends $(\calM,\Psi)$ to $f_0^*\calM$. We denote the objects in $\Foi(X_0/\Qq)_{\KK}$ with a subscript $_0$ and we will remove it when we consider the image by $\Psi_{x,{\KK}}$ in $\oi(X_0/\Qqs)_{\KK}$.

	\subsubsection{}
	\label{point:ss}
	For every finite extension $K(k)\subseteq \KK$, the category $\oi(\Spec(k)/K(k))_\KK$ is equivalent to $\VVec_{\KK}$ as a rigid abelian $\otimes$-category. Moreover, if $k\subseteq k'$ is an extension of finite fields, and $K(k')\subseteq \KK$, the Tannakian category $\Foi(\Spec(k')/K(k))_\KK$ is equivalent to the category of (finite-dimensional) $\KK$-vector spaces endowed with an automorphism, which is induced by the Frobenius structure.
	
	\subsubsection{}
	\label{(x0)}
	Let $(X_0,x)$ be a smooth pointed variety over $\Fq$, geometrically connected over $\Fqs$. Let $E_\lambda$ be a finite extension of $\Qqs$ in $\Qpbar$. The category $\oi(X_0/\Qqs)_{\El}$ admits a fibre functor over some finite extension of $\El$. Let $i_0: x_0/\Fqn \hookrightarrow X_0/\Fqs$ be the immersion of the closed point $x_0$ in $X_0$ where $n$ is the degree of $x_0$ (notation as in §\ref{inverse-image}). Let $\Elx$ be the compositum of $\El$ and $\Q_{q^{n}}$ in $\Qpbar$. Then the functor
	
	$$\omega_{x,\El}: \oi(X_0/\Qqs)_{\El} \xrightarrow{\otimes_{\El}\Elx} \oi(X_0/\Qqs)_{\Elx} \xrightarrow{i_0^*} \oi(x_0/\Q_{q^{n}})_{\Elx} \simeq \VVec_{\Elx}$$ is a fibre functor, as proven in \cite[Lemma 1.8]{CrewMon}. This shows that for every finite extension $\El$ of $\Qqs$, the category $\oi(X_0/\Qqs)_{\El}$ is \textit{Tannakian}. Moreover, the composition $\omega_{x,\El}\circ \Psi_{x,\El}$ is a fibre functor for $\Foi(X_0/\Qq)_{\El}$ over $\Elx$, that we will denote by the same symbol. Therefore, for every finite extension $\Qqs\subseteq \El$, the category $\Foi(X_0/\Qq)_{\El}$ is \textit{Tannakian}.\footnote{The category $\Foi(X_0/\Qq)_{\El}$ is actually Tannakian even when $\El$ is simply a finite extension of $\Qq$. For simplicity, in what follows, we will mainly work with finite extensions of $\Qqs$ in order to make $\oi(X_0/\Qqs)_{\El}$ a Tannakian category.}

	\subsubsection{}
	
	Let $i_0:x'_0\hookrightarrow X_0$ be the immersion of a closed point of degree $n$. Let $\KK$ be a finite extension of $\Qq$ and $\LL$ a finite extension of $\KK$ which contains $\Q_{q^{n}}$. For every $\calM_0\in \Foi(X_0/\Qq)_\KK$, we denote by $F_{x_0'}$ the $n$-th Frobenius structure of $i_0^*(\calM_0)\otimes_\KK \LL$. This is the \textit{(linearized geometric) Frobenius} of $\calM_0$ at $x_0'$. By §\ref{point:ss}, it corresponds to a linear automorphism of an $\LL$-vector space. The characteristic polynomial $$P_{x_0'}(\calM_0,t):=\det(1-tF_{x_0'}|i_0^*(\calM_0)\otimes_\KK \LL)\in \KK[t]$$ is the \textit{(Frobenius) characteristic polynomial} of $\calM_0$ at $x_0'$. See \cite[§4.2.1 and §A.3.1]{Abe} for more details.
	
	In analogy with lisse sheaves, we say that overconvergent $F$-isocrystals are pure or $\iota$-pure if they satisfy the similar conditions on the eigenvalues of the Frobenii at closed points and we say that they are mixed or $\iota$-mixed if they have analogous filtrations.
	
	\section{Coefficient objects}
	\label{generalities-section}
	\subsection{First definitions}
	\label{coef-objects-ss}
Following \cite{Ked}, we adopt a notation which allows us to work with lisse sheaves and overconvergent $F$-isocrystals at the same time. Let $X_0$ be a smooth variety over $\F_q$. 
	
	\begin{defi}[Coefficient objects]\label{coef-ob:d}
		For every prime $\ell\neq p$ and every finite field extension $\KK/\Ql$, we say that a Weil lisse $\KK$-sheaf is a \textit{$\KK$-coefficient object}. When $\KK$ is instead a finite field extension of $\Qq$, we say that an object in $\Foi(X_0/\Qq)_{\KK}$ is a \textit{$\KK$-coefficient object}. If $\KK$ is a field of one of the two types, we denote by $\Coef(X_0,\KK)$ the category of $\KK$-coefficient objects. We say that $\KK$ is \textit{the field of scalars} of $\Coef(X_0,\KK)$. For every prime $\ell$, the category of \textit{$\Qlbar$-coefficient objects} is the $2$-colimit of the categories $\Coef(X_0,\El)$ with $\El\subseteq \Qlbar$. It is denoted by $\Coef(X_0,\Qlbar)$.
	\end{defi}

	We will also work with categories of \textit{geometric coefficient objects}. These are built from the categories of coefficient objects by forgetting the Frobenius structure. To get Tannakian categories, in this case, we will put an additional assumption on the field of scalars. Let $(X_0,x)$ be a smooth connected pointed variety over $\Fq$, geometrically connected over $\Fqs$ for some $s\in \Z_{>0}$.

	\begin{defi}[Admissible fields]
		We say that a finite extension of $\Qqs$ is a \textit{$p$-adic admissible field (for $X_0$)}. To uniformize the notation, when $\ell$ is a prime different from $p$, we say that any finite field extension of $\Ql$ is an \textit{$\ell$-adic admissible field}. We will refer to this second kind of fields as \textit{étale admissible fields}. If $E_\lambda$ is an admissible field, we also say that the place $\lambda$ is \textit{admissible}.  
	\end{defi}

	\begin{defi}[Geometric coefficient objects]For every $p$-adic admissible field $\KK$, we defined in §\ref{psi-iso} a functor of Tannakian categories $\Psi_{x,\KK}:\Foi(X_0/\Qq)_{\KK}\to \oi(X_0/\Qqs)_{\KK}$ which forgets the Frobenius structure. We denote by $\Coef(X^{(x)},\KK)$ the smallest Tannakian subcategory of $\oi(X_0/\Qqs)_{\KK}$ containing the essential image of $\Psi_{x,\KK}$. We say that the category $\Coef(X^{(x)}, \KK)$ is the category of \textit{geometric $\KK$-coefficient objects (with respect to $x$)}.

		When $\KK$ is an étale admissible field, we have again a functor $\Psi_{x,\KK}: \Weil(X_0,\KK)\to \LS(X^{(x)},\KK)$ which forgets the Frobenius structure (see §\ref{Weil-forg}). The category of \textit{geometric $\KK$-coefficient objects (with respect to $x$)} is the smallest Tannakian subcategory of $\LS(X^{(x)},\KK)$ containing the essential image of $\Psi_{x,\KK}$ and it is denoted by $\Coef(X^{(x)}, \KK)$.
		
		 For every prime $\ell$, the category of geometric $\Qlbar$-coefficient objects is the $2$-colimit of the categories of geometric $\El$-coefficient objects where $\El$ varies among the admissible fields for $X_0$ in $\Qlbar$. It is denoted by $\Coef(X^{(x)}, \Qlbar)$ and we denote by $\Psi_{x,\Qlbar}$ the functor induced by the functors $\Psi_{x,\KK}$. If $\calE_0$ is a $\Qlbar$-coefficient object, we drop the subscript $_0$ to indicate $\Psi_{x,\Qlbar}(\calE_0)$, thus we write $\calE$ for $\Psi_{x,\Qlbar}(\calE_0)$. When $X_0$ is geometrically connected over $\Fq$ we drop the superscript $^{(x)}$ in the notation for the categories of geometric coefficient objects, as they do not depend on $x$. 
	\end{defi}
	
	\begin{defi}[Geometric properties]Let $\calE_0$ a $\Qlbar$-coefficient object over a smooth connected variety $X_0$ over $\Fq$. We say that $\calE_0$ is \textit{geometrically semi-simple}, \textit{geometrically trivial} or \textit{geometrically finite} if for one (or equivalently any) choice of an $\F$-point $x$, the associated geometric coefficient object $\calE$ is semi-simple, trivial or finite in $\Coef(X^{(x)},\Qlbar)$. When $X_0$ is not connected we say that a coefficient object has one of the previous properties if the restriction to each connected component of $X_0$ does.
	\end{defi}
	
	\begin{defi}[Cohomology of coefficient objects]Let $(X_0,x)$ be a smooth connected pointed variety over $\Fq$, geometrically connected over $\Fqs$ for some $s\in \Z_{>0}$ and let $\calE_0$ be an $E_\lambda$-coefficient over $X_0$. If $\calE_0$ is a lisse sheaf, we denote by $H^i(X^{(x)},\calE)$ (resp. $H^i_c(X^{(x)},\calE)$) the $\lambda$-adic étale cohomology (resp. the $\lambda$-adic étale cohomology with compact support) of $X^{(x)}$ with coefficients in $\calE$ and by $H^0(X_0,\calE_0)$ the fixed points of $(F^s)^*$ acting on $H^0(X^{(x)},\calE)$. When $E_\lambda$ is $p$-adic, we denote by $H^i(X^{(x)},\calE)$ (resp. $H^i_c(X^{(x)},\calE)$) the rigid cohomology (resp. the rigid cohomology with compact support) of $X_0$ with coefficients in $\calE$. We denote by $H^0(X_0,\calE_0)$ the $E_\lambda$-linear subspace of $H^0(X^{(x)},\calE)$ of fixed points under the action of $(F^s)^*$.
		\begin{rema}
			For both types of coefficient objects, if $E_{\lambda,X^{(x)}}$ is the unit object of \break$\Coef(X^{(x)},E_\lambda)$, the $E_\lambda$-vector space $\Hom(E_{\lambda,X^{(x)}},\calE)$ is canonically isomorphic to $H^0(X^{(x)},\calE)$. We also have a canonical isomorphism between $\Hom(E_{\lambda,X_0},\calE_0)$ and $H^0(X_0,\calE_0)$, where $E_{\lambda,X_0}$ is the unit object in $\Coef(X_0,E_\lambda)$.
		\end{rema}
		
	\end{defi}
	\begin{prop}\label{F-equiv:p}
The functor $(F^s)^*$ is a $\otimes$-autoequivalence of $\Coef(X^{(x)}, \El)$. In particular, the category $\Coef(X^{(x)}, \El^{(x_0)})$ endowed with the endofunctor $(F^s)^*$ is a neutral Tannakian category with Frobenius, as defined in \ref{a-defi}.
	\end{prop}
	\begin{proof}
		For lisse sheaves the result is well-known. In the $p$-adic case see \cite[Remark in §1.1.3]{Abe} or \cite[Corollary 6.2]{Laz} for a proof which does not use arithmetic $\mathscr{D}$-modules.
	\end{proof}
	\begin{coro}\label{irreducible-have-F-structure:c}
		Any irreducible object in $\Coef(X^{(x)},\El)$ admits an $n$-th Frobenius structure for some $n\in \Z_{>0}$.
	\end{coro}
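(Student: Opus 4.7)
The plan is to realize $\calE$ as an irreducible Jordan--Hölder factor of an object coming from $\Coef(X_0,\El)$, and then use Proposition \ref{F-equiv:p} together with the finite length of Tannakian objects to deduce periodicity under $(F^s)^*$.

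First I would observe that $\Coef(X^{(x)},\El)$ is by definition the smallest Tannakian subcategory of $\oi(X_0/\Qqs)_{\El}$ (resp. of $\LS(X^{(x)},\El)$) containing the essential image of $\Psi_{x,\El}$. Since $\Psi_{x,\El}$ is an exact $\otimes$-functor between abelian categories, its essential image is already closed under finite direct sums, tensor products, and duals, and every object of $\Coef(X^{(x)},\El)$ is therefore a subquotient of $\calG:=\Psi_{x,\El}(\calG_0)$ for a single coefficient object $\calG_0\in \Coef(X_0,\El)$. Applied to our irreducible $\calE$, this produces such a $\calG_0$.

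Next, iterating the Frobenius structure of $\calG_0$ yields an isomorphism $(F^s)^*\calG_0 \iso \calG_0$ in $\Coef(X_0,\El)$, and pushing this through $\Psi_{x,\El}$ (which commutes with $(F^s)^*$) gives an isomorphism $(F^s)^*\calG \iso \calG$ in $\Coef(X^{(x)},\El)$. By Proposition \ref{F-equiv:p}, $(F^s)^*$ is a $\otimes$-autoequivalence of $\Coef(X^{(x)},\El)$, so it permutes the finite set of isomorphism classes of irreducible Jordan--Hölder factors of $\calG$. Letting $m$ be the order of this permutation, one gets $(F^{sm})^*\calE \cong \calE$, an $sm$-th Frobenius structure.

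I do not anticipate any genuine obstacle: the two nontrivial inputs are Proposition \ref{F-equiv:p} (already available) and the finite-length property of objects in a Tannakian category (standard, since each such object corresponds to a finite-dimensional representation of the Tannakian group). The only mild bookkeeping point is to confirm that the essential image of $\Psi_{x,\El}$ is stable under the Tannakian operations so that one $\calG_0$ suffices, but this is immediate from the exact tensor structure of $\Psi_{x,\El}$.
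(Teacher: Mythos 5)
Your proof is correct and follows essentially the same route as the paper's: realize the irreducible object as a subquotient of $\Psi_{x,\El}(\calG_0)$ for some $\calG_0$ with Frobenius structure, then invoke Proposition \ref{F-equiv:p} to see that $(F^s)^*$ permutes the finitely many isomorphism classes of irreducible subquotients, and take a power. The only difference is that you spell out why a single $\calG_0$ suffices (closure of the essential image under the tensor operations), which the paper compresses into "by definition"; this is a harmless elaboration, not a different argument.
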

	\begin{proof}
		By definition, an irreducible object $\calF$ in $\Coef(X^{(x)},\El)$ is a subquotient of some geometric coefficient object $\calE$ that admits a Frobenius structure. By Proposition \ref{F-equiv:p}, the functor $(F^s)^*$ is an autoequivalence, thus it permutes the isomorphism classes of the irreducible subquotients of $\calE$. This implies that there exists $n>0$ such that $(F^{ns})^* \calF \simeq \calF$, as we wanted.
	\end{proof}
	
	\begin{rema}
		\label{geom-remark}
		
		When $X_0$ is geometrically connected over $\Fq$, the category $\Coef(X, \Qq)$ is the same category as the one considered by Crew to define the fundamental group at the end of §2.5 in \cite{CrewMon}. The author is not aware whether this category is equivalent to the one considered by Abe to define, for example, the fundamental group in \cite[§2.4.17]{Abe}. By Corollary \ref{irreducible-have-F-structure:c}, the category $\Coef(X,\Qq)$ is a Tannakian subcategory of the one defined by Abe.
		
	\end{rema}

	\begin{defi}We say that a $\KK$-coefficient object is \textit{constant} if it is geometrically trivial. We denote by $\Coef_{\cst}(X_0,\El)$ the (strictly) full subcategory of $\Coef(X_0,\El)$ of constant $\El$-coefficient objects. We define similarly $\Coef_{\cst}(X_0,\Qlbar)\subseteq \Coef(X_0,\Qlbar)$.
	\end{defi}

	\begin{defi}For every prime $\ell$, the category $\Coef(\Spec(\F_q),\Qlbar)$ is canonically equivalent to the category of $\Qlbar$-vector spaces endowed with an automorphism. For $a\in \Qlbar^{\times}$ we write $\Qlbar^{(a)}$ for the rank $1$ coefficient object over $\Spec(\F_q)$ associated to the vector space $\Qlbar$ endowed with the multiplication by $a$.
		Let $p_{X_0}:X_0\to \Spec(\Fq)$ be the structural morphism. For every $\Qlbar$-coefficient object $\calE_0$ and every $a\in \Qlbar^\times$, we say that $\calE_0\otimes p_{X_0}^*\left(\Qlbar^{(a)}\right)$ is the \textit{twist} of $\calE_0$ by $a$ and we denote it by $\calE_0^{(a)}$. A twist is said to be \textit{algebraic} if $a$ is algebraic.
	\end{defi}
	
	\begin{rema}
		The operation of twisting coefficient objects by an element $a\in \Qlbar^{\times}$ gives an exact autoequivalence of the category $\Coef(X_0, \Qlbar)$. In particular, for every coefficient object, the property of being absolutely irreducible is preserved by any twist.
	\end{rema}

	\begin{defi}
		If $\calE_0$ is a $\Qlbar$-coefficient object, for every closed point $x_0$ of $X_0$ we denote by $P_{x_0}(\calE_0,t)$ the \textit{(Frobenius) characteristic polynomial} of $\calE_0$ at $x_0$.
	\end{defi}

	\begin{defi}
		\label{rational:d}
		Let $\ell$ be a prime number, $\KK$ a field endowed with an inclusion $\tau: \KK  \hookrightarrow \Qlbar$. We say that a $\Qlbar$-coefficient object $\calE_0$ is \textit{$\KK$-rational with respect to $\tau$} if the characteristic polynomials at closed points have coefficients in the image of $\tau$. 
		A \textit{$\KK$-rational coefficient object} is the datum of $\tau: \KK \hookrightarrow \Qlbar$ and a $\Qlbar$-coefficient object that is $\KK$-rational with respect to $\tau$. We will also say that an $\El$-coefficient object is $E$-rational if it is $E$-rational with respect to the natural embedding $E\hookrightarrow \El\subseteq \Qlbar$. We say that a coefficient object is \textit{algebraic} if it is $\Qbar$-rational for one (or equivalently any) map $\tau:\Qbar\hookrightarrow \Qlbar$. A coefficient object is said \textit{$p$-plain} if it is algebraic and all the eigenvalues at closed points are $p$-plain (see \ref{fields} for the notation).
	\end{defi}

	We can compare two $\KK$-rational coefficient objects with different fields of scalars looking at their Frobenius characteristic polynomials.
	
	\begin{defi}\label{comp:d}
		Let $\calE_0$ and $\calF_0$ be two coefficient objects that are $\KK$-rational with respect to $\tau$ and $\tau'$ respectively. We say that $\calE_0$ and $\calF_0$ are \textit{$\KK$-compatible} if their characteristic polynomials at closed points are the same as polynomials in $\KK[t]$, after the identifications given by $\tau$ and $\tau'$.
	\end{defi}
	Our general aim in our article will be to convert the numerical data provided by the Frobenius characteristic polynomials at closed points to structural properties of the coefficient objects. As an example, we prove the following general statement for Weil lisse sheaves.

	\begin{prop}\label{p-plain-etale:p}
		Let $X_0$ be a connected variety over $\Fq$ and let $\calV_0$ be a Weil lisse $E_\lambda$-sheaf over $X_0$. If all the eigenvalues of the Frobenius at some closed point $x_0$ of $X_0$ are $\ell$-adic units, then $\calV_0$ is an étale lisse sheaf. In particular, an extension of étale lisse sheaves in $\Weil(X_0,\El)$ is étale.
	\end{prop}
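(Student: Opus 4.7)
The plan is to reduce to the case where $\calV_0$ is irreducible and then apply Deligne's structure theorem for irreducible Weil sheaves on a normal variety (\cite[\S 1.3]{Weil2}): every such sheaf is isomorphic to $\calE_0 \otimes p_{X_0}^*(\Qlbar^{(\alpha)})$ for some irreducible étale lisse sheaf $\calE_0$ and some $\alpha \in \Qlbar^\times$. The reduction is clean: the Jordan--Hölder filtration of $\calV_0$ in the abelian category of Weil lisse sheaves yields a factorization of $P_{x_0}(\calV_0,t)$ as a product of the analogous polynomials of the irreducible subquotients, so each such subquotient inherits the hypothesis that its Frobenius eigenvalues at $x_0$ are $\ell$-adic units.

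For an irreducible subquotient $\calW_0 \simeq \calE_0 \otimes p_{X_0}^*(\Qlbar^{(\alpha)})$, the eigenvalues of $F_{x_0}$ on $\calW_x$ are $\alpha^{\deg(x_0)} \beta_i$, where the $\beta_i$ are the Frobenius eigenvalues of $\calE_0$ at $x_0$. Since $\calE_0$ is étale, the image of $\pie(X_0,x)$ sits in a compact subgroup of $\GL_r(\Qlbar)$, so the $\beta_i$ are automatically $\ell$-adic units. The hypothesis on $\calW_0$ then forces $\alpha^{\deg(x_0)}$, and hence $\alpha$ itself, to be an $\ell$-adic unit. Consequently $\Qlbar^{(\alpha)}$ is étale on $\Spec(\Fq)$ and $\calW_0$, being the tensor product of two étale sheaves, is also étale.

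To conclude, fix any Frobenius lift $\phi \in W(X_0,x)$ and let $\rho$ denote the Weil representation attached to $\calV_0$. The eigenvalues of $\rho(\phi)$ coincide, with multiplicity, with the union of the eigenvalues of $\phi$ acting on the irreducible subquotients, all of which have now been shown to be étale. Hence every eigenvalue of $\rho(\phi)$ is an $\ell$-adic unit, so the Jordan decomposition of $\rho(\phi)$ shows that the cyclic subgroup $\langle \rho(\phi) \rangle$ is relatively compact in $\GL_r(\Qlbar)$. Combined with the compactness of $\rho(\pie(X,x))$, this shows that $\rho(W(X_0,x))$ has compact closure in a fixed $\GL_r(E_\lambda)$; being a continuous map from $W(X_0,x)$ to a profinite group, $\rho$ extends uniquely to the profinite completion $\pie(X_0,x)$, which is exactly the étaleness of $\calV_0$. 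The ``in particular'' statement is immediate, as a $p$-plain lisse sheaf satisfies the hypothesis at every closed point, and so in particular at $x_0$.

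I expect the principal subtlety to lie in correctly invoking Deligne's structure theorem: it is the only non-formal ingredient, and it is what converts the numerical condition at the \emph{single} closed point $x_0$ into a genuine decomposition of the sheaf into an étale part and a scalar twist. Everything after this reduces to the standard equivalence between the étaleness of a Weil sheaf and the boundedness of the image of its associated representation, which is in turn equivalent to the Frobenius eigenvalues being $\ell$-adic units.
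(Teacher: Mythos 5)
Your proof is correct, but it takes a longer and logically heavier route than the paper's, so let me compare the two. The paper first reduces to the case where $x_0$ is rational (étale lisse sheaves satisfy étale descent), so that a single geometric Frobenius $\gamma\in F_{x_0}$ together with $\rho_0(\pi_1^{\et}(X,x))$ generates the full image $\Pi_0$. Since $\rho_0(\gamma)$ has $\ell$-adic unit eigenvalues by hypothesis, the closed subgroup it generates is compact; $\rho_0(\pi_1^{\et}(X,x))$ is compact since $\pi_1^{\et}(X,x)$ is profinite; and the latter is normalized by the former, so $\Pi_0$ sits inside a compact, hence profinite, subgroup and $\rho_0$ factors through $\pi_1^{\et}(X_0,x)$. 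That is the entire argument — elementary and self-contained.

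You instead pass through the Jordan--H\"older filtration and invoke Deligne's structure theorem from Weil II (irreducible Weil sheaf on a normal variety $\simeq$ étale sheaf $\otimes$ scalar twist), deduce that the twist parameter $\alpha$ of each subquotient is an $\ell$-adic unit, hence each subquotient is étale, and only then run the compactness argument in your final paragraph to put the pieces back together. All your steps are sound: the characteristic polynomial does factor through the subquotients, an étale sheaf does have $\ell$-adic unit Frobenius eigenvalues, and the Jordan-decomposition compactness argument for $\langle\rho(\phi)\rangle$ is valid. But observe that your final paragraph is essentially the paper's whole proof, applied once the eigenvalue hypothesis has been re-established for an arbitrary degree-one Frobenius lift $\phi$. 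The detour through Deligne's structure theorem lets you avoid the "reduce to $x_0$ rational" step, but at the cost of importing a nontrivial theorem of Weil II where the paper needs nothing beyond the definition of an étale sheaf. If you keep your final paragraph and simply replace the structure-theorem reduction with the base-change reduction to $x_0$ rational and the choice $\phi=\gamma\in F_{x_0}$, you recover the paper's proof exactly.
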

	\begin{proof}The condition on the eigenvalues is preserved after an extension of the base field, thus we may assume that $x_0$ is a rational point. Let $\rho_0$ be the $E_\lambda$-linear representation of $W(X_0,x)$ associated to $\calV_0$ with $x$ an $\F$-point over $x_0$. Write $\Pi_0\subseteq \GL(\calV_{x})$ for the image of $\rho_0$ and $\Pi$ for the image of $\pi_1^{\et}(X,x)$ via $\rho_0$. The group $\Pi_0$ is generated by $\Pi$ and $\rho_0(\gamma)$, where $\gamma$ is some element in $F_{x_0}$. Write $\Gamma$ for the closure in $\GL(\calV_{x})$ of the group generated by $\rho_0(\gamma)$. The subgroup $\Pi\subseteq \GL(\calV_{x})$ is profinite because it is a quotient of the profinite group $\pi_1^{\et}(X,x)$. We want to prove that $\Gamma$ is profinite as well. 
		
	Fix a basis $\{v_1,\dots, v_r\}$ of $\calV_x$ such that $\rho_0(\gamma)$ admits a Jordan normal form. Since the eigenvalues of $\rho_0(\gamma)$ are $\ell$-adic units, $\rho_0(\gamma)$ lies in the closed profinite subgroup $\GL_{r}(\calO_{E_\lambda})\subseteq \GL_{r}({E_\lambda})$, where $\calO_{E_\lambda}$ is the ring of integers of $E_\lambda$. This implies that $\Gamma$ is a closed subgroup of the profinite group $\GL_{r}(\calO_{E_\lambda})$, and therefore it is a profinite group, as we wanted. To conclude the proof note that $\Gamma$ normalizes $\Pi$, so that $\Pi\cdot \Gamma\subseteq \GL(\calV_{x})$ is a profinite subgroup. By construction, $\Pi_0$ is contained in $\Pi\cdot \Gamma$. As a result, the $\ell$-adic representation $\rho_0$ factors through the profinite completion of $W(X_0,x)$, which is $\pi_1^{\et}(X_0,x)$. This concludes the proof.
	\end{proof}

	\subsection{Monodromy groups}
	\label{monodromy-groups-ss}We introduce now the main characters of the article: the \textit{fundamental groups} and the \textit{monodromy groups} of coefficient objects. We will present in Proposition \ref{fun-exact-seq:p} some fundamental exact sequences for these groups. The sequences in Proposition \ref{fun-exact-seq:p} represent the analogue of the well-known exact sequence which relates the geometric and the arithmetic étale fundamental group of a variety. 
	
	\subsubsection{}
	\label{fundamental-groups:d}
	Let $(X_0,x)$ be a smooth connected pointed variety. For every étale admissible field $\El$ we consider the fibre functor $$\omega_{x,\El}: \Weil(X_0,\El)\to \VVec_{\El}$$ attached to $x$. This functor sends a lisse sheaf $\calV_0$ to its stalk $\calV_{x}$. When $\El$ is a $p$-adic admissible field, we defined in §\ref{(x0)} a fibre functor for $\oi(X_0/\Qqs)_{\El}$ over $\Elx$, denoted by $\omega_{x,\El}$. As usual, in order to uniformise the notation, when $\El$ is an étale admissible field we write $\Elx$ for $\El$. Therefore, for every admissible field $\El$, we have a fibre functor $\omega_{x,\El}$ of $\Coef(X^{(x)},\El)$ over $\Elx$. We will denote with the same symbol the fibre functor induced on $\Coef(X_0,\El)$. As the fibre functors commute with the extension of scalars, for every $\ell$ we also have a fibre functor over $\Qlbar$ for $\Qlbar$-coefficient objects. We denote it by $\omega_{x,\Qlbar}$.
	
	\begin{defi}[Fundamental groups] Let $(X_0,x)$ be a smooth connected pointed variety and $\El$ an admissible field. We denote by $\pil(X_0,x)$ the Tannakian group over $\Elx$ of $\Coef(X_0,\El)$ with respect to $\omega_{x,\El}$. We also write $\pil(X,x)$ for the Tannakian group of $\Coef(X^{(x)},\El)$ with respect to the restriction of $\omega_{x,\El}$. The functor $$\Psi_{x,\El}: \Coef(X_0,\El)\to \Coef(X^{(x)},\El)$$ induces a closed immersion $\pil(X,x)\hookrightarrow \pil(X_0,x)$. We denote by $\pil(X_0,x)^{\cst}$ the quotient of $\pil(X_0,x)$ corresponding to the inclusion of $\Coef_{\cst}(X_0,\El)$ in $\Coef(X_0,\El)$.
	\end{defi}
	
	\begin{rema}
		\label{isomorphism-fibre-funct:r}
		Suppose that $\Elx=\El$, then there exists an isomorphism of functors $\eta:\omega_{x,\El} \Rightarrow \omega_{x,\El}\circ (F^s)^*$. For lisse sheaves, this is induced by the choice of an étale path between $x$ and the $\F$-point sent to $x$ via $F^{s}:X\to X$. In the case of overconvergent $F$-isocrystals, it is constructed in \cite[§2.4.18]{Abe}. Thanks to the existence of $\eta$ and Proposition \ref{F-equiv:p}, one can define a \textit{Weil group} for coefficient objects over the field $\El$ (cf. §\ref{a-Weil:c}). 
	\end{rema}

	Every $\El$-coefficient object $\calE_0$ generates three $\El$-linear Tannakian categories, the \textit{arithmetic} one $\langle \calE_0 \rangle\subseteq \Coef(X_0,\El)$, the \textit{geometric} one $\langle \calE \rangle \subseteq \Coef(X^{(x)},\El)$ and the Tannakian category of constant objects $\langle \calE_0 \rangle_{\cst}\subseteq \langle \calE_0 \rangle$. We will consider these categories endowed with the fibre functors obtained by restricting $\omega_{x,\El}$. 
	
	\begin{defi}(Monodromy groups)
		\label{monodromy-groups-d}
			Let $(X_0,x)$ be a smooth connected pointed variety. We denote by $G(\calE_0,x)$ the \textit{(arithmetic) monodromy group} of $\calE_0$, namely the Tannakian group of $\langle \calE_0 \rangle$. The \textit{geometric monodromy group} of $\calE_0$ will be instead the Tannakian group of $\langle \calE \rangle$ and it will denoted by $G(\calE,x)$. We will also consider the quotient $G(\calE_0,x)\twoheadrightarrow G(\calE_0,x)^{\cst}$, which corresponds to the inclusion $\langle \calE_0 \rangle_{\cst}\subseteq \langle \calE_0 \rangle$. These three groups are quotients of the fundamental groups defined in §\ref{fundamental-groups:d}.
	\end{defi}
	
	\begin{rema}When $\calV_0$ is a lisse sheaf and $\rho_0:W(X_0,x)\to \GL(\calV_x)$ is the associated $\ell$-adic representation, then $G(\calV_0,x)$ is the Zariski-closure of the image of $\rho_0$ and $G(\calV,x)$ is the Zariski-closure of $\rho_{0}(\pi_1^{\et}(X,x))$.
		When $\calM_0$ is an overconvergent $F$-isocrystal and $x_0$ is a rational point, $G(\calM,x)$ is the same group as the one defined by Crew in \cite{CrewMon} and denoted by $\DGal(\calM,x)$. This group is also isomorphic to the group $\DGal(\calM,x)$ which appears in \cite{AE}.
	\end{rema}
	
	\begin{rema}
		\label{base-point-r}
		Since $X_0$ is connected, the étale fundamental groups associated to two different $\F$-points of $X_0$ are (non-canonically) isomorphic. Hence, in the case of lisse sheaves, the isomorphism class of the monodromy groups does not depend on the choice of $x$. For overconvergent $F$-isocrystals, by \cite[Theorem 3.2]{DM}, the monodromy groups associated to two different $\F$-points become isomorphic after passing to a finite extension of the field of scalars. We do not know any better result in this case. Note that thanks to \cite{Del11}, we also know that if $\lambda$ is a $p$-adic place, the isomorphism class of $\pil(X_0,x)\otimes_{\Elx} \Qpbar$ is independent of the choice of $x$.
	\end{rema}
	
	Let us present now the fundamental exact sequence for coefficient objects over $X_0$. For overconvergent $F$-isocrystals the sequence is a generalization of the one proven in \cite[Proposition~4.7]{Pal}.

	\begin{prop}\
		\label{fun-exact-seq:p}
		Let $(X_0,x)$ be a smooth pointed variety over $\Fq$, geometrically connected over $\Fqs$, and let $\lambda$ be an admissible place of a number field $E$. 
		
		\begin{itemize}
			\item[{(i)}]
			The natural morphisms previously presented give an exact sequence
			
			\begin{equation*}
			\label{fundamental-exact-sequence}
			1 \to \pil(X,x) \to \pil(X_0,x) \to \pil(X_0,x)^{\cst} \to 1.
			\end{equation*}
			
			\item[{(ii)}]
			For every $E_\lambda$-coefficient object $\calE_0$ and every $\calF\in \langle\calE\rangle$, there exists $\calG_0\in \langle \calE_0\rangle$ such that $\calF\subseteq \calG$.
			\item[{(iii)}]
			For every $\El$-coefficient object $\calE_0$, the exact sequence of (i) sits into the following commutative diagram with exact rows and surjective vertical arrows
			\begin{center}
				\begin{tikzcd}
					1 \arrow{r}& \pil(X,x) \arrow{r}\arrow[d,two heads] &\pil(X_0,x) \arrow{r}\arrow[d,two heads]& \pil(X_0,x)^{\cst} \arrow{r}\arrow[d,two heads]& 1\\
					1\arrow{r} & G(\calE,x) \arrow{r} & G(\calE_0,x)  \arrow{r} & G(\calE_0,x)^{\cst}\arrow{r} & 1.\
				\end{tikzcd}
			\end{center}
			
			\item[{(iv)}] The affine group scheme $\pi_1(\bfC_0,\omega_0)^{\cst}$ is isomorphic to the pro-algebraic completion of $\Z$ over $\KK$ and $G(\calE_0,x)^{\cst}$ is a commutative algebraic group.
			\item[{(v)}]  The affine group scheme $\pil(X_0,x)^{\cst}$ is canonically isomorphic to $\pil(\Spec(\Fqs),x).$ In particular, the profinite group $\pi_0(\pil(X_0,x)^{\cst})$ is canonically isomorphic to $\Gal(\F/\Fqs)$.
		\end{itemize}
		
	\end{prop}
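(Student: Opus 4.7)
The overarching plan is to reduce the statement to the general result for neutral Tannakian categories with Frobenius, which is established in the appendix (§\ref{a-fundamental-exact-sequence:ss}). To do this, I would first verify that the hypothesis $E_\lambda^{(x_0)} = E_\lambda$ ensures that $\omega_{x,E_\lambda}$ is a fibre functor of $\Coef(X^{(x)},E_\lambda)$ over $E_\lambda$ itself, making this category neutral Tannakian. Combined with Proposition \ref{F-equiv:p}, the pair $(\Coef(X^{(x)},E_\lambda),(F^s)^*)$ is a neutral Tannakian category with Frobenius in the sense of \ref{a-defi}. I would then identify $\Coef(X_0,E_\lambda)$ with the associated ``Weil'' category (as in \ref{a-Weil:d}), using Remark \ref{isomorphism-fibre-funct:r} to supply the canonical isomorphism $\eta : \omega_{x,E_\lambda} \Rightarrow \omega_{x,E_\lambda}\circ (F^s)^*$. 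Once this identification is in place, parts (i)--(iii) specialize the corresponding statements in the appendix, while (iv)--(v) reduce to computing the constant quotient.

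The key technical content is part (ii), which is the ``arithmetic extension'' property underlying the injectivity of $\pi_1^\lambda(X,x)\hookrightarrow \pi_1^\lambda(X_0,x)$ and the surjectivity of $\pi_1^\lambda(X_0,x)\twoheadrightarrow \pi_1^\lambda(X,x)$ after restriction. I would argue as follows: given $\calF\in\langle\calE\rangle$, write $\calF$ as a subquotient of some $\calH\in\langle\calE\rangle$ constructed by tensor, dual and direct-sum operations applied to $\calE$; it suffices therefore to show that every irreducible subquotient $\calF'$ of $\calE$ is contained in the geometric realization of some $\calG_0\in\langle\calE_0\rangle$. By Corollary \ref{irreducible-have-F-structure:c}, there exists $n\in\Z_{>0}$ with $(F^{ns})^*\calF'\simeq \calF'$; then the geometric object $\bigoplus_{i=0}^{n-1}(F^{is})^*\calF'$ admits a Frobenius structure, hence descends to an arithmetic object, and this arithmetic object lies in $\langle\calE_0\rangle$ because it is constructed from arithmetic subquotients of $\calE_0$ and pullbacks along powers of Frobenius (which preserve the Tannakian subcategory generated by $\calE_0$). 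The main obstacle in the whole proof is precisely this step, since without the Frobenius-autoequivalence of Proposition \ref{F-equiv:p} the set of isomorphism classes of irreducible subquotients of $\calE$ need not be permuted by $(F^s)^*$ in finite orbits.

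Granting (ii), part (i) follows from the general criterion \cite[Proposition 2.21]{DM}: the functor $\Psi_{x,E_\lambda}$ is fully faithful and stable under subobjects on the image (by (ii)), giving the closed immersion $\pi_1^\lambda(X,x)\hookrightarrow \pi_1^\lambda(X_0,x)$ and the quotient being a normal subgroup; the quotient is then the Tannakian group of the full subcategory of objects whose pullback is trivial, which is by definition $\pi_1^\lambda(X_0,x)^{\cst}$. Part (iii) is immediate from the functoriality of the whole construction applied to the Tannakian subcategories $\langle\calE_0\rangle$, $\langle\calE\rangle$ and $\langle\calE_0\rangle_{\cst}$.

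For (iv) and (v), the constant subcategory $\Coef_{\cst}(X_0,E_\lambda)$ consists of objects pulled back from $\Spec(\F_{q^s})$; by §\ref{point:ss} this is equivalent (via $p_{X_0}^*$ where $p_{X_0}:X_0\to \Spec(\F_{q^s})$) to the category of finite-dimensional $E_\lambda$-vector spaces endowed with an automorphism, i.e.\ to $\Rep_{E_\lambda}(\Z^{\mathrm{alg}})$ where $\Z^{\mathrm{alg}}$ denotes the pro-algebraic completion of $\Z$ over $E_\lambda$. This identifies $\pi_1^\lambda(X_0,x)^{\cst}$ canonically with $\pi_1^\lambda(\Spec(\F_{q^s}),x)$ and with $\Z^{\mathrm{alg}}$, which is commutative; any quotient (in particular $G(\calE_0,x)^{\cst}$) is therefore commutative. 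Taking $\pi_0$ of the pro-algebraic completion of $\Z$ recovers the profinite completion $\widehat{\Z}\simeq \Gal(\F/\F_{q^s})$, yielding the last assertion of (v). I would check that the isomorphism is canonical by verifying that the pullback $p_{X_0}^*$ corresponds under the Tannakian dictionary to the projection $\pi_1^\lambda(X_0,x)\twoheadrightarrow \pi_1^\lambda(X_0,x)^{\cst}$ followed by the identification with $\pi_1^\lambda(\Spec(\F_{q^s}),x)$.
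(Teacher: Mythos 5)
Your overarching plan — reduce everything to the general result for neutral Tannakian categories with Frobenius established in the appendix — is exactly the paper's strategy, and your treatment of (iv)–(v) via the constant subcategory and the pro-algebraic completion of $\Z$ matches the paper's direct argument for (v). However, your attempt to supply a direct proof of part (ii), which you correctly identify as the key technical content, has a genuine gap.

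You construct $\calG = \bigoplus_{i=0}^{n-1}(F^{is})^*\calF'$ with its cyclic Frobenius structure and then assert that the resulting arithmetic object $\calG_0$ lies in $\langle\calE_0\rangle$ ``because it is constructed from arithmetic subquotients of $\calE_0$ and pullbacks along powers of Frobenius.'' This justification is wrong: $\calF'$ is a \emph{geometric} subquotient of $\calE$, not an arithmetic subquotient of $\calE_0$, and in general $\calF'$ admits no Frobenius structure at all (that is precisely why you pass to the $n$-fold sum). Even after your construction produces \emph{some} $\calG_0 \in \Coef(X_0,\El)$ with $\Psi(\calG_0)\simeq\calG$, the question of whether $\calG_0$ lies in $\langle\calE_0\rangle$ depends delicately on the chosen Frobenius structure: different choices of the isomorphism $(F^{ns})^*\calF'\simeq\calF'$ differ by a scalar, and the corresponding arithmetic lifts differ by a constant twist, which can move $\calG_0$ out of $\langle\calE_0\rangle$. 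Nothing in your argument pins down the ``correct'' twist, and indeed verifying that some choice works is essentially equivalent to the statement you are trying to prove. (Your reduction from general $\calF$ to irreducible $\calF'$ is also not justified: in a non-semisimple category, knowing the result for irreducible constituents does not immediately give an embedding of $\calF$ itself into the image of $\Psi$.)

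The paper's appendix avoids this altogether by proving abstractly that $\pi_1(\bfC,\omega)$ is a \emph{normal} subgroup of $\pi_1(\bfC_0,\omega_0)$ (Lemma \ref{a-normal:l}), using the Weil group $W(\bfC_0,\omega_0)=\pi_1(\bfC,\omega)\rtimes\Z$ together with the isomorphism $\eta$ of Remark \ref{isomorphism-fibre-funct:r} and the fact that the image of $W(\bfC_0,\omega_0)\to\pi_1(\bfC_0,\omega_0)$ is Zariski-dense (Lemma \ref{a-zar-dense:l}). Normality then yields condition (c) of Theorem \ref{a-EHS:t}, which is exactly statement (ii), and conditions (a), (b) are checked separately in Proposition \ref{a-fundamental-exact-sequence:p}. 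If you want to keep your ``cyclic-sum'' construction, you would still need the normality statement (or the dense-Weil-group argument) to verify that one of the possible Frobenius structures lands $\calG_0$ in $\langle\calE_0\rangle$; at that point you might as well invoke the appendix directly, as the paper does for parts (i)--(iv).
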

	
	\begin{proof}
		By Proposition \ref{F-equiv:p}, the category $\Coef(X^{(x)}, \Elx)$ endowed with the endofunctor $(F^s)^*$ is a neutral Tannakian category with Frobenius, in the sense of Definition \ref{a-defi}. Thus by Theorem \ref{a-fundamental-exact-sequence:t} we get all the parts from (i) to (iv).
		
	Passing to (v), write $q_{X_0}: X_0\to \Spec(\Fqs)$ for the morphism induced by the $\F$-point $x$. The inverse image functor $$q_{X_0}^*:\Coef(\Spec(\Fqs),\El^{(x_0)})\to\Coef_{\cst}(X_0,\El^{(x_0)})$$ admits as a quasi-inverse the functor $q_{X_0*}$ which sends a constant coefficient object $\calE_0$ to the vector space $H^0(X^{(x)},\calE)$ endowed with the action of $(F^s)^*$. This implies that $q_{X_0}^*$ induces an isomorphism $$\pil(X_0,x)^{\cst}\iso \pil(\Spec(\Fqs),x).$$ Since $\Coef(\Spec(\Fqs),\El^{(x_0)})$ is canonically equivalent to $\Rep_{\El^{(x_0)}}(W(\F/\Fqs))$, the profinite group $\pi_0(\pil(X_0,x)^{\cst})$ is canonically isomorphic to $\Gal(\F/\Fqs)$.
	\end{proof}

	\subsection{Comparison with the étale fundamental group}\label{comparison-etale:ss}
	\subsubsection{}
	We continue our analysis of the fundamental groups of coefficient objects. Here we focus our attention on the group of connected components. The statements of this section are fairly easy for lisse sheaves and more difficult for overconvergent $F$-isocrystals. In the latter case, Crew have already studied the problem when $X_0$ is a smooth curve, \cite{CrewMon}. Later in \cite{Ete}, \'Etesse proved that overconvergent isocrystals (with and without Frobenius structure) over smooth varieties of arbitrary dimension satisfy étale descent\footnote{In the article he states the result for overconvergent $F$-isocrystals, but the same proof works without Frobenius structure.}. This allows a generalization of Crew's work. 

	Drinfeld and Kedlaya presented in \cite[Appendix B]{DK} how to perform such a generalization for the arithmetic fundamental group of overconvergent $F$-isocrystals. We will be mainly interested in the extension of their result to the geometric fundamental group.
	
	\subsubsection{} Let $(X_0,x)$ be a smooth connected pointed variety over $\Fq$ and $\El$ an admissible field for $X_0$. Following \cite[Remark B.2.5]{DK}, we define $$\Rep_{\El}^{\mathrm{smooth}}(\pi_1^{\et}(X_0,x)):=2\textrm{-}\varinjlim_{H} \Rep_{\El}(\pi_1^{\et}(X_0,x)/H) $$ where $H$ varies among the normal open subgroups of $\pi_1^{\et}(X_0,x)$. This category is naturally endowed with a fully faithful embedding $$\Rep_{\El}^{\mathrm{smooth}}(\pi_1^{\et}(X_0,x))\hookrightarrow \Coef(X_0,\El).$$ The essential image is closed under subquotients. Therefore, this functor induces a surjective morphism $\pil(X_0,x)\twoheadrightarrow \pie(X_0,x)$, where $\pie(X_0,x)$ denotes here the pro-constant profinite group scheme over $\Elx$ associated to the étale fundamental group of $X_0$. The subcategory $$\Rep_{\El}^{\mathrm{smooth}}(\Gal(\F/k_{X_0}))\subseteq \Rep_{\El}^{\mathrm{smooth}}(\pi_1^{\et}(X_0,x))$$ of representations which factor through $\Gal(\F/k_{X_0})$ is sent by the previous functor to the category of constant coefficient objects. This implies that the composition of the morphisms $$\pil(X_0,x)\twoheadrightarrow \pie(X_0,x)\twoheadrightarrow\Gal(\F/k_{X_0})$$ factors through $\pil(X_0,x)^{\cst}$. By Proposition \ref{fun-exact-seq:p}.(v), the induced morphism $\pil(X_0,x)^{\cst}\twoheadrightarrow \Gal(\F/k_{X_0})$ is surjective with connected Kernel. Finally, the homotopy exact sequence for the étale fundamental group and the fundamental exact sequence of Proposition \ref{fun-exact-seq:p}.(i) fit in a commutative diagram
	\begin{equation}
	\label{square-pie}
	\begin{tikzcd}
	1\arrow[r]&\pil(X,x) \arrow[r]\arrow[d] & \pil(X_0,x)  \arrow[d, two heads] \arrow[r] & \pil(X_0,x)^{\cst} \arrow[r]\arrow[d, two heads] & 1 \\
	1\arrow[r]&\pi_1^{\et}(X,x) \arrow[r] & \pi_1^{\et}(X_0,x)\arrow[r] & \Gal(\F/k_{X_0})\arrow[r]&1.\
	\end{tikzcd}
	\end{equation}
	The central and the right vertical arrows are the morphisms previously constructed. The left one is the unique morphism making the diagram commutative.

	\begin{prop}
		\label{connected-components-etale-proposition}
		Let $(X_0,x)$ be a smooth connected pointed variety over $\Fq$. For every admissible place $\lambda$ we have a commutative diagram
		\begin{equation}\label{pi0-diagram}
		\begin{tikzcd}
		1 \arrow{r}& \pi_0(\pil(X,x)) \arrow[r]\arrow[d,"\sim" labl,"\varphi"] &\pi_0( \pil(X_0,x)) \arrow{r}\arrow[d,"\sim" labl,"\varphi_0"]& \pi_0(\pil(X_0,x)^{\cst}) \arrow{r}\arrow[d,"\sim" labl,"\varphi^{\cst}_0"]& 1 \\
		1\arrow[r]&\pi_1^{\et}(X,x) \arrow[r] & \pi_1^{\et}(X_0,x)\arrow[r] & \Gal(\F/k_{X_0})\arrow[r]&1,\	
		\end{tikzcd}
		\end{equation}
		where the vertical arrows are isomorphisms and the rows are exact. The diagram is functorial in $(X_0,x)$ when it varies among the smooth connected pointed varieties over $\Fq$.
	\end{prop}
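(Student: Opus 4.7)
The plan is to verify that each of the three vertical arrows in (\ref{pi0-diagram}) is an isomorphism, and then read off the exactness of the top row via a diagram chase against the exact bottom row. First, I would reduce to the case $E_\lambda^{(x_0)} = E_\lambda$: replacing $E_\lambda$ by a finite extension $E'_{\lambda'}$ does not change the profinite groups $\pi_0(\pil(-, x))$, since for any affine group scheme $G$ over $E_\lambda$ one has $\pi_0(G \otimes_{E_\lambda} E'_{\lambda'}) \simeq \pi_0(G)$ canonically as profinite groups. Under this assumption the top row is obtained by applying $\pi_0$ to the fundamental exact sequence of Proposition \ref{fun-exact-seq:p}.(i), so right-exactness is formal; left-exactness will come at the end via the diagram chase.

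The rightmost vertical $\varphi_0^{\cst}$ is immediate from Proposition \ref{fun-exact-seq:p}.(v), which identifies $\pil(X_0, x)^{\cst}$ with $\pil(\Spec(\F_{q^s}), x)$ and its $\pi_0$ with $\Gal(\F/\F_{q^s}) = \Gal(\F/k_{X_0})$. For the central vertical $\varphi_0$, the case of lisse sheaves is classical: every Weil lisse $E_\lambda$-sheaf with finite monodromy is given by a smooth finite $E_\lambda$-representation of $\pie(X_0, x)$. The substantive input is the $p$-adic case, which is precisely \cite[Appendix B]{DK}. The strategy there, which I would follow, uses \'Etesse's \'etale descent theorem for overconvergent $F$-isocrystals \cite{Ete}: if $\calE_0$ has finite arithmetic monodromy, then it trivializes on some connected finite \'etale cover $X'_0 \to X_0$ and so is classified by a smooth finite $E_\lambda$-representation of $\pie(X_0, x)$. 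Passing to compatible systems of such finite quotients identifies $\pi_0(\pil(X_0, x))$ with $\pie(X_0, x)$.

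The left vertical $\varphi$ is then treated by the same method, applied to geometric coefficient objects on $X$: here one invokes \'Etesse's descent for overconvergent isocrystals \emph{without} Frobenius structure to show that any finite-monodromy object in $\Coef(X^{(x)}, E_\lambda)$ becomes trivial on a connected finite \'etale cover of $X$, giving the identification $\pi_0(\pil(X, x)) \simeq \pie(X, x)$. Exactness of the top row now follows by a short diagram chase: if $g \in \pi_0(\pil(X, x))$ maps to $1$ under the top horizontal map, then by commutativity $\varphi(g) \in \pie(X, x)$ maps to $1$ in $\pie(X_0, x)$, whence $\varphi(g) = 1$ by left-exactness of the bottom row, and finally $g = 1$ since $\varphi$ is injective. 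Functoriality in $(X_0, x)$ is inherited from the functoriality of all the Tannakian constructions. The main obstacle is the isomorphism $\varphi_0$ in the $p$-adic case; without the \'Etesse--Drinfeld--Kedlaya input one would be limited to Crew's original argument for curves \cite{CrewMon}.
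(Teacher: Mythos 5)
Your overall structure is sound in principle — if you could show all three vertical arrows are isomorphisms, exactness of the top row would indeed follow by the diagram chase you sketch (right-exactness being formal since $\pi_0$ is right exact). The isomorphism $\varphi_0^{\cst}$ follows from Proposition \ref{fun-exact-seq:p}.(v), and $\varphi_0$ is the DK/Kedlaya input, as you correctly identify. But there is a genuine gap in the treatment of $\varphi$, which is the crux of the proposition in the $p$-adic case.

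You claim that $\varphi$ is an isomorphism by the ``same method'' as $\varphi_0$, citing \'Etesse's descent for overconvergent isocrystals \emph{without} Frobenius structure. This does not work. \'Etesse's theorem is a descent statement: given an isocrystal on a finite \'etale cover with descent data, you can descend it. It does \emph{not} tell you that a finite-monodromy object in $\Coef(X^{(x)}, E_\lambda)$ trivializes on a finite \'etale cover of $X$ — that is precisely what needs proving, and in the $F$-structure setting it relies on the unit-root/finite monodromy theorems of Tsuzuki and Kedlaya (the input behind \cite[Proposition~B.7.6.(i)]{DK} and \cite[Theorem~2.3.7]{KedUR}), which use the Frobenius structure in an essential way. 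There is no ready-made analogue of those results for isocrystals with no Frobenius structure, and the objects of $\Coef(X^{(x)}, E_\lambda)$ are not representations of $\pi_1^{\et}(X,x)$ \emph{a priori}; establishing that link is exactly what the proposition asserts for $\varphi$.

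The paper avoids this obstacle by proving the harder implication in the \emph{other} direction: it first establishes left-exactness of the top row via a purely Tannakian argument — using Lemma \ref{a-normal:l} to realize a finite geometric object $\calE$ as a subobject of $\Psi(\calF'_0)$ with $\calF'_0$ absolutely irreducible, showing $\calF'$ is then finite, and explicitly constructing a finite $\calF_0 \in \Coef(X_0, \Qlbar)$ via the Weil group of $\langle\calF'\rangle$ such that $\calE \subseteq \calF$. With the top row exact and $\varphi_0$, $\varphi_0^{\cst}$ known to be isomorphisms, $\varphi$ then falls out by diagram chase — the converse of what you propose. If you want to keep your decomposition, you need to supply an actual proof that finite-monodromy objects in $\Coef(X^{(x)}, E_\lambda)$ come from $\pie(X,x)$; as it stands, that step is asserted, not proven.
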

	
	\begin{proof}
		The diagram is constructed applying the functor $\pi_0$ to (\ref{square-pie}), hence it is functorial. To prove that it has all the desired properties we may extend the field of scalars to $\Qlbar$. We start by showing that the upper row is exact. As the functor $\pi_0$ is right exact, it is enough to prove the injectivity of the morphism $\pi_0(\pil(X,x))\to\pi_0( \pil(X_0,x))$. The $\pi_0$ of the Tannakian group of a Tannakian category is the Tannakian group of the subcategory of finite objects. Thus we have to prove that for every irreducible finite geometric $\Qlbar$-coefficient object $\calE$, there exists a finite object $\calF_0 \in \Coef(X_0,\Qlbar)$, such that $\calE$ is a subquotient of $\calF$. 
		
		By Lemma \ref{a-normal:l}, there exists $\calF'_0\in \Coef(X_0,\Qlbar)$ such that $\calE$ is a subobject of $\calF'$. As $\calE$ is irreducible, we can even assume $\calF'_0$ to be irreducible. In particular, there exist $g_1,\dots,g_n\in G(\calF'_0,x)(\Qlbar)$ such that $\omega_{x,\Qlbar}(\calF'_0)=\sum_{i=1}^n g_i (\omega_{x,\Qlbar}(\calE))$. The algebraic group $G(\calF',x)$ is normal in $G(\calF'_0,x)$ by Proposition \ref{fun-exact-seq:p}, thus the vector spaces $g_i (\omega_{x,\Qlbar}(\calE))$ are $G(\calF',x)$-stable for every $i$. In addition, their monodromy groups as representations of $G(\calF',x)$ are all finite, as they are conjugated to the monodromy group of $\calE$. Therefore $\calF'$, being a sum of finite objects, is a finite object. 
		
		Let $W(\calF'_0,x)$ be the Weil group of $\langle \calF'\rangle$, as in §\ref{a-Weil:c}. Since $G(\calF',x)$ is finite, there exists $n\in \Z_{>0}$ such that $(F^n)^*$ acts trivially on it. If $\rho'$ is the representation of $G(\calF',x)$ associated to $\calF'$, then $(F^n)^*\rho'=\rho'$. Thus $\rho:=\bigoplus_{i=0}^{n-1} (F^i)^*\rho'$ can be endowed with a Frobenius structure $$\Phi:F^*\left(\bigoplus_{i=0}^{n-1} (F^i)^*\rho'\right)\iso \bigoplus_{i=0}^{n-1} (F^i)^*\rho'$$ such that, for every $1\leq i\leq n-1$, the restriction of $\Phi$ to $F^*\left((F^i)^*\rho'\right)$ is the canonical isomorphism $F^*\left((F^i)^*\rho'\right)\iso (F^{i+1})^*\rho'$. The pair $(\rho,\Phi)$ induces a representation of $W(\calF'_0,x)$ with finite image and thus a finite coefficient object $\calF_0$. The original geometric coefficient object $\calE$ is a subobject of $\calF$, therefore $\calF_0$ satisfies the properties that we wanted.
		
		Finally, we prove that the vertical arrows of (\ref{pi0-diagram}) are isomorphisms.
		The morphism $\varphi_0^{\cst}$ is an isomorphism by Proposition~\ref{fun-exact-seq:p}.(v). By diagram chasing, it remains to prove that $\varphi_0$ is an isomorphism. For lisse sheaves, this is quite immediate. If a lisse sheaf has finite arithmetic monodromy group, its associate $\ell$-adic representation factors through a finite quotient of the Weil group of $X_0$. In the $p$-adic case one can prove that $\varphi_0$ is an isomorphism using \cite[Theorem~2.3.7]{KedUR}, as it is explained in \cite[Proposition~B.7.6.(i)]{DK}.
	\end{proof}
	
	\begin{prop}
		\label{connected-after-cover-proposition}
		Let $\calE_0$ be a $\Qlbar$-coefficient object over $(X_0,x)$.
		\begin{itemize}
			\item[{(i)}] For every finite étale morphism $f_0:(Y_0,y)\to(X_0,x)$ of pointed varieties, the natural morphisms $G(f_0^*\calE_0,y)\to G(\calE_0,x)$ and $G(f^*\calE,y)\to G(\calE,x)$ are open immersions.
			\item[{(ii)}] There exists a choice of $f_0:(Y_0,y)\to(X_0,x)$ such that $G(f_0^*\calE_0,y)\iso G(\calE_0,x)^\circ$ and $G(f^*\calE,y)\iso G(\calE,x)^\circ$.  
		\end{itemize}
		
	\end{prop}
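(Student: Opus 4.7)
The plan is to reduce the proposition to the comparison between Tannakian fundamental groups and étale fundamental groups established in Proposition \ref{connected-components-etale-proposition}, which identifies $\pi_0$ of the former with the latter functorially in the pointed variety.

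For part (i), I would show separately that the two natural maps are closed immersions with finite-index image, so open immersions. The closed immersion statement is immediate from the Tannakian criterion recalled in the introduction: the tensor generator $f_0^*\calE_0$ of $\langle f_0^*\calE_0\rangle$ lies in the essential image of $f_0^*:\langle\calE_0\rangle\to\langle f_0^*\calE_0\rangle$, and the same holds after forgetting the Frobenius structure for the geometric version. For the finite-index claim, one applies $\pi_0$: by the functoriality part of Proposition \ref{connected-components-etale-proposition}, the map $\pi_0(G(f_0^*\calE_0,y))\to\pi_0(G(\calE_0,x))$ is a quotient of the open immersion of finite index $\pie(Y_0,y)\hookrightarrow\pie(X_0,x)$ (finite and étale), forcing the image of $G(f_0^*\calE_0,y)$ in $G(\calE_0,x)$ to contain the neutral component, i.e.\ to have finite-index image. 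The identical argument applies geometrically using $\pie(Y,y)\hookrightarrow\pie(X,x)$.

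For part (ii), granted (i) it suffices to produce a cover $f_0$ such that the images of both maps land in the respective neutral components. Equivalently, the compositions $G(f_0^*\calE_0,y)\to G(\calE_0,x)\to\pi_0(G(\calE_0,x))$ and $G(f^*\calE,y)\to G(\calE,x)\to\pi_0(G(\calE,x))$ should be trivial. Since the targets are finite, Proposition \ref{connected-components-etale-proposition} lets these compositions factor through $\pie(Y_0,y)\to\pi_0(G(\calE_0,x))$ and $\pie(Y,y)\to\pi_0(G(\calE,x))$. Writing $H_1\subseteq\pie(X_0,x)$ and $H_2\subseteq\pie(X,x)$ for the kernels of these finite quotients, the task becomes: find an open finite-index subgroup $L\subseteq\pie(X_0,x)$ with $L\subseteq H_1$ and $L\cap\pie(X,x)\subseteq H_2$, and take $f_0$ to be the corresponding pointed finite étale cover.

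The main obstacle is the geometric condition, as $H_2$ is a priori only open in $\pie(X,x)$ rather than in $\pie(X_0,x)$. I would handle it by first replacing $H_2$ with the intersection of its finitely many conjugates under $\pie(X_0,x)$ to make it $\pie(X_0,x)$-stable; finiteness of the conjugate set follows because the conjugation action on the finite group $\pie(X,x)/H_2$ factors through a finite quotient of $\pie(X_0,x)$. Then, using that $\pie(X,x)$ is closed in the profinite group $\pie(X_0,x)$ and $H_2$ is now normal in $\pie(X_0,x)$, a standard extension argument (lifting through the profinite topology on $\pie(X_0,x)/H_2$) yields an open finite-index $V\subseteq\pie(X_0,x)$ with $V\cap\pie(X,x)\subseteq H_2$. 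Setting $L:=H_1\cap V$ produces the desired cover. By construction, the images of the two monodromy maps lie in the respective neutral components, and by (i) these images are open; hence they equal the neutral components, giving the claimed isomorphisms.
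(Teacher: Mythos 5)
Your closed-immersion argument in (i) is correct (the tensor generator goes to a tensor generator, so the Tannakian criterion applies), and your derivation of (ii) from (i)---producing a finite étale cover that simultaneously kills both finite groups of connected components, with the care about conjugating $H_2$ to make it $\pie(X_0,x)$-normal before intersecting---is a sound and more detailed rendition of the paper's one-line ``by Proposition~\ref{connected-components-etale-proposition} \dots (i) implies (ii)''.

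However, the finite-index (openness) half of (i) has a genuine gap, and it is exactly where the paper invests its real work. Knowing that $\pi_0(G(f_0^*\calE_0,y))\to\pi_0(G(\calE_0,x))$ is ``a quotient'' of the finite-index inclusion $\pie(Y_0,y)\hookrightarrow\pie(X_0,x)$ only constrains the groups of connected components; it says nothing about the neutral components, and hence does not force $G(f_0^*\calE_0,y)\supseteq G(\calE_0,x)^\circ$. A closed subgroup $K\subseteq G$ can surject onto $\pi_0(G)$ and still have positive codimension (e.g.\ a maximal torus inside $\GL_2$), so the $\pi_0$-level diagram is compatible with the pullback monodromy group being far too small. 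What one actually needs is a ``no dimension drop under finite étale pullback'' statement. For lisse sheaves this is supplied by the Weil-group picture: $W(Y_0,y)$ has finite index in $W(X_0,x)$, and the Zariski closure of a finite-index subgroup of a Zariski-dense subgroup of an algebraic group contains the identity component. For overconvergent $F$-isocrystals there is no such ambient group of $\ell$-adic points, and the paper instead invokes \'Etesse's étale descent to build the $H$-equivariant category $\langle f^*\calE\rangle^H$ (and cites Drinfeld--Kedlaya for the arithmetic side) and proves the induced morphism $G(f^*\calE,y)\rtimes H\to G(\calE,x)$ is surjective; that surjectivity is what yields finite index. This ingredient is missing from your proposal, and it cannot be recovered from $\pi_0$ information alone.
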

	
	\begin{proof}
		We note that by Proposition~\ref{connected-components-etale-proposition} the group of connected components of the arithmetic monodromy group (resp. geometric monodromy group) are quotients of the arithmetic étale fundamental group (resp. geometric étale fundamental group), thus (i) implies (ii). 
		
		When $\calE_0$ is a lisse sheaf, (i) is well-known. If $\calE_0$ is an overconvergent $F$-isocrystal, the result on the arithmetic monodromy groups is a consequence of \cite[Proposition~B.7.6.(ii)]{DK}. It remains to prove (i) for the geometric monodromy groups of overconvergent $F$-isocrystals. It is enough to treat the case when $Y_0\to X_0$ is a Galois cover with Galois group $H$ and $Y_0$ is geometrically connected over $\Fq$. As $Y_0$ is geometrically connected over $\Fq$, the group $H$ acts on $\langle f^*\calE \rangle$ via $\Qpbar$-linear autoequivalences. Let $\langle f^*\calE \rangle^H$ be the category of $H$-equivariant objects in $\langle f^*\calE \rangle$. We choose isomorphisms of fibre functors between $\omega_{y,\Qpbar}$ and $\omega_{h(y),\Qpbar}$ for every $h\in H$. This choice induces an action of $H$ on $G(f^*\calE,y).$
		
		By \cite{Ete}, overconvergent isocrystals with and without $F$-structure satisfy étale descent. Therefore, there exist fully faithful embeddings $\langle \calE \rangle\hookrightarrow \langle f^*\calE \rangle^H$ and $\langle f^*\calE \rangle^H\hookrightarrow \oi(X_0/\Qq)_{\Qpbar}$. The former embedding induces a morphism at the level of the Tannakian groups $\varphi:G(f^*\calE,y)\rtimes H\to G(\calE,x)$. By definition, the subcategory $\langle \calE \rangle\subseteq \oi(X_0/\Qq)_{\Qpbar}$ is closed under the operation of taking subquotients. Thus, the same is true for $\langle\calE\rangle \subseteq \langle f^*\calE \rangle^H$. This proves that $\varphi$ is surjective, which in turn implies that $G(f^*\calE,y)$ is an open subgroup of $G(\calE,x)$.
	\end{proof}

	\subsection{Rank 1 coefficient objects}
	\label{rank1:ss}This section is an interlude on rank $1$ coefficient objects. One of the starting points of Weil II is a finiteness result for rank 1 lisse sheaves, consequence of class field theory. Thanks to a reduction to unramified $p$-adic representations of the étale fundamental group, the same statement is now known for overconvergent $F$-isocrystals of rank $1$.
	
	\begin{theo}[{{\cite[Proposition~1.3.4]{Weil2}, \cite[Lemma 6.1]{Abe2}}}]
		\label{rank-1-finite-order:t} Let $X_0$ be a smooth variety over $\Fq$. Every $\El$-coefficient object of rank $1$ is a twist of a finite $\El$-coefficient object.
	\end{theo}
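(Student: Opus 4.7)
The plan is to reduce the statement to the assertion that the geometric monodromy $G(\calL,x)\subseteq \Gm$ of a rank $1$ coefficient object is \emph{finite}, after which the residual arithmetic data can be absorbed into a scalar twist. Concretely, if $G(\calL,x)$ has order $N$, then $\calL_0^{\otimes N}$ is geometrically trivial, hence constant, and therefore by Proposition~\ref{fun-exact-seq:p}(v) corresponds to a character of $W(\F/\Fqs)$ sending $F^s$ to some $\alpha\in \Qlbar^{\times}$. Choosing $a\in \Qlbar^{\times}$ with $a^{Ns}=\alpha^{-1}$, the twist $\calL_0^{(a)}=\calL_0\otimes p_{X_0}^{*}(\Qlbar^{(a)})$ has trivial $N$-th tensor power, hence arithmetic monodromy contained in $\mu_{N}$; thus $\calL_0=(\calL_0^{(a)})^{(a^{-1})}$ is displayed as a twist of a finite coefficient object. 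This reduction is purely Tannakian and is insensitive to the distinction between the étale and the $p$-adic case.

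For the étale case $\ell\neq p$, the geometric object $\calL$ is a continuous character $\chi\colon\pi_1^{\et}(X,x)^{\mathrm{ab}}\to \El^{\times}$ with compact image. I would invoke geometric class field theory for smooth varieties in positive characteristic (Serre--Kato--Saito, or, more elementarily, a Bertini-type reduction to a suitably chosen smooth curve $C_0\subseteq X_0$ for which $\pi_1^{\et}(C_0)^{\mathrm{ab}}\to\pi_1^{\et}(X,x)^{\mathrm{ab}}$ is surjective). The curve case is then the classical rank $1$ finiteness of Deligne \cite[1.3.4]{Weil2}: class field theory presents the target as an extension of $\hZ$ by a compact group whose Frobenius eigenvalues are Weil numbers, and compatibility with the Frobenius structure on $X_0$ together with boundedness of $\chi$ forces $\chi$ to be of finite order up to a twist from $\Spec(\Fq)$.

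For the $p$-adic case I would appeal to \cite[Lemma~6.1]{Abe2}, which exhibits a rank $1$ overconvergent $F$-isocrystal, up to a twist from $\Spec(\Fq)$, as an unramified $p$-adic character of $\pi_1^{\et}(X_0,x)$; the finiteness of the geometric part of such a character is then supplied by the étale-type argument applied at the $p$-adic place. The main obstacle throughout is this geometric finiteness step in higher dimension, since one has no Langlands correspondence available for the abelian part; the cleanest circumvention is the reduction to smooth curves, where class field theory applies directly. Once geometric finiteness is in hand, the Tannakian twist of the first paragraph handles both types of coefficient objects uniformly.
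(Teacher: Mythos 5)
The paper supplies no proof here; it attributes the statement directly to Deligne \cite[Prop.~1.3.4]{Weil2} for lisse sheaves and to Abe \cite[Lemma~6.1]{Abe2} for overconvergent $F$-isocrystals, noting only in the prose preceding the theorem that the $p$-adic case reduces to unramified $p$-adic characters of the \'etale fundamental group. Your proposal ultimately does the same: the substantive content of your second and third paragraphs is exactly the pair of references the paper already cites, and your reading of Abe's Lemma~6.1 as merely a reduction step (with the finiteness then requiring a separate \'etale-type argument) likely understates it — the paper cites that lemma as the full $p$-adic statement, not a half of it.

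The Tannakian scaffolding in your first paragraph — reduce to finiteness of $G(\calL,x)\subseteq\Gm$, then twist away the constant part using Proposition~\ref{fun-exact-seq:p}(v) — is correct, but it is not a simplification: the class-field-theoretic argument behind Deligne's 1.3.4, namely the Katz--Lang finiteness of $\Ker\bigl(\pi_1^{\et}(X_0)^{\mathrm{ab}}\to\Gal(\F/\Fq)\bigr)$, already delivers the twist decomposition in one step, with geometric finiteness as a by-product rather than a prerequisite. Also be careful with your parenthetical suggestion of a ``Bertini-type reduction to a curve $C_0$ with $\pi_1^{\et}(C_0)^{\mathrm{ab}}\to\pi_1^{\et}(X_0)^{\mathrm{ab}}$ surjective'' as the more elementary route: the paper explicitly flags (introduction; cf.\ Theorem~\ref{dom-curve-theorem}) that there is no unconditional Lefschetz theorem for $\pi_1^{\et}$ of open smooth varieties in characteristic $p$, and the pro-$p$ part of the abelianization is precisely where such surjectivity becomes delicate because of wild ramification. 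The robust route to the geometric finiteness is Katz--Lang, not a Bertini section.
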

	
	\begin{coro}
		\label{rank-1-finite-order:c}
		For every $\Qlbar$-coefficient object $\calE_0$ over $X_0$, there exist a positive integer $n$ and elements $a_1,\dots,a_n\in \Qlbar^{\times}$ such that $$\calE_0^{\sms}\simeq \bigoplus_{i=1}^n \calF_{i,0}^{(a_i)},$$ where for each $i$ the coefficient object $\calF_{i,0}$ is irreducible with finite order determinant. The elements $a_1,\dots,a_n$ are uniquely determined up to permutation and multiplication by a root of unity. If $\calE_0$ is $E$-rational, the elements $a_1,\dots,a_n$ can be chosen so that $a_i^{r_i}\in E$ for every $i$, where $r_i$ is the rank of $\calF_{i,0}$. 
	\end{coro}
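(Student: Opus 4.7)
The plan is to decompose $\calE_0^{\sms}$ into its irreducible summands and then correct the determinant of each summand by an appropriate twist, invoking Theorem~\ref{rank-1-finite-order:t}. Concretely, I would first write $\calE_0^{\sms} \simeq \bigoplus_{i=1}^n \calG_{i,0}$ with each $\calG_{i,0}$ irreducible of rank $r_i$. Since $\det(\calG_{i,0})$ is a rank $1$ coefficient object, Theorem~\ref{rank-1-finite-order:t} yields a finite rank $1$ coefficient object $\calL_i$ and an element $c_i \in \Qlbar^\times$ with
\[
\det(\calG_{i,0}) \simeq \calL_i \otimes p_{X_0}^*\!\bigl(\Qlbar^{(c_i)}\bigr),
\]
the pair $(\calL_i, c_i)$ being unique up to simultaneously multiplying $c_i$ by a root of unity $\zeta$ and twisting $\calL_i$ by $\Qlbar^{(\zeta^{-1})}$.

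Next I would pick any $r_i$-th root $a_i \in \Qlbar^\times$ of $c_i$ and set $\calF_{i,0} := \calG_{i,0}^{(a_i^{-1})}$. Twisting is an exact autoequivalence, so $\calF_{i,0}$ remains irreducible, and the compatibility $\det(\calE^{(a)}) = \det(\calE)^{(a^{\rk \calE})}$ gives $\det(\calF_{i,0}) \simeq \calL_i$, which has finite order. The relation $\calG_{i,0} \simeq \calF_{i,0}^{(a_i)}$ then assembles into the decomposition asserted in the statement.

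The hard part will be the $E$-rationality refinement, since the individual $\calG_{i,0}$ are typically not $E$-rational; only the full product $P_{x_0}(\calE_0,t) = \prod_i P_{x_0}(\calG_{i,0}, t)$ lies in $E[t]$. My plan is to exploit the fact that $\Gal(\Qbar/E)$ permutes the multiset $\{\calG_{i,0}\}$ and, up to the ambiguity by roots of unity above, sends $c_i$ to some $c_j$. Combining this Galois symmetry with the freedom to rescale each $c_i$ by a root of unity (absorbing the discrepancy into $\calL_i$), I would choose the $a_i$ coherently within each Galois orbit so that $a_i^{r_i} = c_i$ ends up $\Gal(\Qbar/E)$-invariant and thus lies in $E$. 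The combinatorial bookkeeping required for this coherent normalization is the main obstacle.
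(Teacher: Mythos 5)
Your first paragraph is correct and is the expected derivation from Theorem~\ref{rank-1-finite-order:t}: decompose $\calE_0^{\sms}$ into irreducibles $\calG_{i,0}$, write $\det(\calG_{i,0})\simeq\calL_i^{(c_i)}$ with $\calL_i$ finite, untwist by a chosen $r_i$-th root $a_i$ of $c_i$, and use $\det\bigl(\calG^{(b)}\bigr)=\det(\calG)^{(b^{\rk\calG})}$ to see that $\calF_{i,0}:=\calG_{i,0}^{(a_i^{-1})}$ has finite-order determinant.

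The $E$-rationality refinement is where your plan must change, and the reason is not merely "combinatorial bookkeeping": as literally written the refinement is false, so no normalization within the Galois orbits can succeed. Take $X_0=\Spec\Fq$ and let $\calE_0$ be the rank-$2$ $\Qlbar$-coefficient object on which the Frobenius acts with characteristic polynomial $1-2t+2t^2\in\Q[t]$; this object is $\Q$-rational and semi-simple, and $\calE_0=\calG_{1,0}\oplus\calG_{2,0}$ with $\calG_{j,0}$ of rank $1$ and Frobenius eigenvalue $1\pm i$. Here $r_j=1$, and the only freedom in $a_j$ is multiplication by a root of unity, so $a_1^{r_1}=a_1\in(1+i)\mu_\infty$; since every element of that coset has complex absolute value $\sqrt2$ under any embedding into $\C$, none lies in $\Q$. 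This is exactly where the Galois plan breaks: $\Gal(\Qbar/\Q)$ swaps $c_1\leftrightarrow c_2$, so only the product $c_1c_2$, not the individual $c_i$, can be made invariant — the irreducible summands of an $E$-rational object are in general not $E$-rational. The statement must therefore be read as allowing $E$ to be replaced by a finite extension (which is how it is actually invoked in the paper, e.g.\ "after possibly enlarging $E$" in the proof of Theorem~\ref{companions-smooth-theorem}), and under that reading the argument is short and requires neither companions nor any Galois action on $\Coef(X_0,\Qlbar)$: since $\calE_0$ is algebraic, so is each $\calG_{i,0}$, hence at any closed point $x_0$ of degree $d$ the Frobenius eigenvalue of $\det(\calG_{i,0})$ equals $\zeta_{i,x_0}\,c_i^{\,d}$ with $\zeta_{i,x_0}$ a root of unity, which forces $c_i$ to be algebraic; now enlarge $E$ to a number field containing all the $c_i$ and take $a_i^{r_i}=c_i$. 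I would replace your third paragraph with this observation.
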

	
	Corollary \ref{rank-1-finite-order:c} is important as it allows to reduce many statements on coefficient objects to the case of absolutely irreducible coefficient objects with finite order determinant. It is convenient to introduce the following definitions.
	
	\begin{defi}[Twist classes]\label{t-c:d}
		For a prime $\ell$, we denote by $\Theta_\ell$ the torsion-free abelian group $\Qlbar^{\times}/\mu_{\infty}(\Qlbar)$. We say that an element in $\Theta_\ell$ is an \textit{($\ell$-adic) twist class}. We say the class of $1$ in $\Theta_\ell$ is the \textit{trivial twist class}. Let $\calE_0$ be a $\Qlbar$-coefficient object and let $a_1,\dots,a_n\in \Qlbar^{\times}$ be elements as in Corollary \ref{rank-1-finite-order:c}. We say that the classes $[a_1],\dots,[a_n]$ in $\Theta_\ell$ are the \textit{twist classes of $\calE_0$} and we denote by $\Theta(\calE_0)$ the subset $\{[a_1],\dots,[a_n]\}\subseteq \Theta_\ell$. We write $\mathbb{X}(\calE_0)$ for the group generated by $\Theta(\calE_0)$ in $\Theta_\ell$ and by $\mathbb{X}(\calE_0)_\Q$ the $\Q$-linear subspace $\mathbb{X}(\calE_0)\otimes_\Z \Q\subseteq \Theta_\ell\otimes_\Z \Q$. If the only twist class of $\calE_0$ is the trivial one, we say that $\calE_0$ is \textit{untwisted}. 
	\end{defi}

	An important application of Theorem \ref{rank-1-finite-order:t} is the \textit{global monodromy theorem}. In this case, the extension to overconvergent $F$-isocrystals is due to Crew.
	
	\begin{theo}[Grothendieck, Crew]
		\label{global-monodromy-theorem}
		For every coefficient object $\calE_0$, the radical\footnote{For us, the radical of an algebraic group is defined to be its maximal normal solvable connected subgroup.} of $G(\calE,x)$ is unipotent.
	\end{theo}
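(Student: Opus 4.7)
The plan is to reduce the statement to the finiteness theorem for rank one coefficient objects (Theorem \ref{rank-1-finite-order:t}) by exhibiting a non-trivial character of $G(\calE,x)$ whenever the radical fails to be unipotent. Since the radical of any algebraic group equals the radical of its identity component, the conclusion is unchanged upon replacing $G(\calE,x)$ by $G(\calE,x)^\circ$. By Proposition \ref{connected-after-cover-proposition}(ii), we may choose a finite étale morphism $f_0:(Y_0,y)\to(X_0,x)$ such that the natural map $G(f^*\calE,y)\iso G(\calE,x)^\circ$ is an isomorphism. Replacing $(X_0,x)$ and $\calE_0$ by $(Y_0,y)$ and $f_0^*\calE_0$, we may therefore assume $G(\calE,x)$ is connected.

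Suppose, for contradiction, that $R:=R(G(\calE,x))$ is not unipotent. Then $R/R_u$ is a non-trivial torus, central in the reductive quotient $G(\calE,x)/R_u$; since the derived subgroup of $G(\calE,x)/R_u$ is semisimple, its intersection with $R/R_u$ is finite, and consequently the abelianisation of $G(\calE,x)$ has positive-dimensional torus part. This produces a non-trivial character $\chi:G(\calE,x)\to \Gm$, which is surjective because $G(\calE,x)$ is connected. By the Tannakian formalism, $\chi$ corresponds to a rank one geometric coefficient object $\calF\in\langle\calE\rangle\subseteq \Coef(X^{(x)},\El)$ whose monodromy group $G(\calF,x)=\Image(\chi)=\Gm$ is infinite.

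By Corollary \ref{irreducible-have-F-structure:c}, there exists $n\geq 1$ and an isomorphism $(F^{ns})^*\calF\iso \calF$, which equips $\calF$ with an $ns$-th Frobenius structure. Thus $\calF$ descends to a rank one coefficient object $\calF_0$ on $X_0$, viewed as a smooth variety over $\F_{q^{ns}}$, in such a way that $\Psi_{x,\El}(\calF_0)\cong \calF$. Applying Theorem \ref{rank-1-finite-order:t} to $\calF_0$ yields an isomorphism $\calF_0\cong \calG_0^{(a)}$ for some finite coefficient object $\calG_0$ and some $a\in\Qlbar^{\times}$. By construction, $\calG_0^{(a)}=\calG_0\otimes p_{X_0}^*\Qlbar^{(a)}$, and the second tensor factor is pulled back from a point and so geometrically trivial. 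Hence $\calF$ is geometrically isomorphic to $\calG$, which is finite, so $G(\calF,x)$ is finite—contradicting the previous paragraph. This contradiction forces $R$ to be unipotent.

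The essential mathematical content is packaged in Theorem \ref{rank-1-finite-order:t}, so no new technique is needed; the main obstacle is really just organizational. One must be careful in the reduction to connected monodromy via the étale cover (so that a torus quotient of $R$ produces an actual character of $G(\calE,x)$ rather than only of its identity component) and in handling the change of base field $\F_q \rightsquigarrow \F_{q^{ns}}$ required for the Frobenius descent in the final step, after which Theorem \ref{rank-1-finite-order:t} closes the argument.
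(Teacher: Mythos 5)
Your proof is correct and follows essentially the same route as the paper, which simply cites Grothendieck's argument (Weil II, Th\'eor\`eme 1.3.8) and Crew's curve case, indicating that in higher dimension one substitutes Proposition~\ref{connected-after-cover-proposition} for the reduction to connected monodromy and Theorem~\ref{rank-1-finite-order:t} for the rank-one finiteness; you have unpacked precisely that argument. The only points to tidy are (a) the character $\chi$ may a priori only exist after extending the field of scalars, since the relevant torus quotient need not be split over $\El$ --- harmless, as unipotence of the radical is insensitive to base change --- and (b) the phrase ``$X_0$ viewed as a variety over $\F_{q^{ns}}$'' should really be the base change $X_0 \otimes_{\Fqs} \F_{q^{ns}}$ on the geometrically connected component, so that the $ns$-th Frobenius structure on $\calF$ does define a coefficient object to which Theorem~\ref{rank-1-finite-order:t} applies; you already flag both points yourself.
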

	
	\begin{proof}
		In the case of lisse sheaves, this is a theorem of Grothendieck, and it is proven in \cite[Théorème 1.3.8]{Weil2}. In the $p$-adic case, Crew has proven the result when $X_0$ is a smooth curve \cite[Theorem~4.9]{CrewMon}. One obtains the result in higher dimensions replacing [\textit{ibid.}, Proposition 4.6] with Proposition~\ref{connected-after-cover-proposition} and [\textit{ibid.}, Corollary~1.5] with Theorem~\ref{rank-1-finite-order:t}.
		
	\end{proof}

\begin{coro}\label{derived-sub:c}
	Let $\calE_0$ be a geometrically semi-simple coefficient object. The neutral component $G(\calE,x)^\circ$ is a semi-simple algebraic group which coincides with the derived subgroup of $G(\calE_0,x)^\circ$. 
\end{coro}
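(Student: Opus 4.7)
The plan is to unpack what geometric semi-simplicity says about $G(\calE,x)$, invoke the global monodromy theorem for the first assertion, and then use that the arithmetic-over-geometric quotient is commutative for the second.

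First I would observe that since $\calE$ is semi-simple in the Tannakian category $\Coef(X^{(x)},\El)$ and we are in characteristic $0$, the monodromy group $G(\calE,x)$ is reductive; in particular its neutral component $G(\calE,x)^\circ$ is a connected reductive group. By Theorem~\ref{global-monodromy-theorem}, the radical of $G(\calE,x)$ is unipotent. But the radical of a connected reductive group is a central torus, and the intersection of a torus with a unipotent group is trivial; hence the radical is trivial and $G(\calE,x)^\circ$ is a semi-simple algebraic group. This establishes the first half of the statement.

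For the second half I would exploit the fundamental exact sequence from Proposition~\ref{fun-exact-seq:p}(iii):
\[
1\to G(\calE,x)\to G(\calE_0,x)\to G(\calE_0,x)^{\cst}\to 1,
\]
together with the fact, from Proposition~\ref{fun-exact-seq:p}(iv), that $G(\calE_0,x)^{\cst}$ is commutative. It follows that the derived subgroup $[G(\calE_0,x)^\circ,G(\calE_0,x)^\circ]$ is contained in $G(\calE,x)$, and since this derived subgroup is connected, it actually lies inside $G(\calE,x)^\circ$.

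For the reverse inclusion, $G(\calE,x)^\circ$ is a connected subgroup of $G(\calE_0,x)$ containing the identity, hence is contained in $G(\calE_0,x)^\circ$. Since we have just shown that $G(\calE,x)^\circ$ is semi-simple, it equals its own derived subgroup, so
\[
G(\calE,x)^\circ=[G(\calE,x)^\circ,G(\calE,x)^\circ]\subseteq [G(\calE_0,x)^\circ,G(\calE_0,x)^\circ].
\]
Combining the two inclusions yields the claimed equality. The only step requiring real input is the first: all the work is absorbed into the global monodromy theorem, the rest being formal diagram chasing with the fundamental exact sequence.
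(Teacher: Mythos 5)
Your proof is correct and follows essentially the same route as the paper: deduce reductivity of $G(\calE,x)^\circ$ from geometric semi-simplicity, upgrade to semi-simplicity via the global monodromy theorem (radical is both a torus and unipotent, hence trivial), and then obtain the two inclusions between $G(\calE,x)^\circ$ and $[G(\calE_0,x)^\circ,G(\calE_0,x)^\circ]$ from the identity $G(\calE,x)^\circ=[G(\calE,x)^\circ,G(\calE,x)^\circ]$ on one side and the commutativity of $G(\calE_0,x)^{\cst}$ via the fundamental exact sequence on the other. The only cosmetic difference is that you spell out the connectivity step more explicitly, whereas the paper phrases it as commutativity of the quotient $G(\calE_0,x)^\circ/G(\calE,x)^\circ$; these are equivalent.
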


\begin{proof}
Thanks to Theorem~\ref{global-monodromy-theorem}, $G(\calE,x)^\circ$ is a semi-simple algebraic group, therefore $$G(\calE,x)^\circ=[G(\calE,x)^\circ,G(\calE,x)^\circ]\subseteq \left[G(\calE_0,x)^\circ,G(\calE_0,x)^\circ\right].$$
	On the other hand, by Proposition \ref{fun-exact-seq:p}.(iv), the quotient $G(\calE_0,x)^\circ/G(\calE,x)^\circ$ is commutative, which implies that $$\left[G(\calE_0,x)^\circ,G(\calE_0,x)^\circ\right]\subseteq G(\calE,x)^\circ.$$ This concludes the proof.
\end{proof}

	One can even enhance Theorem \ref{global-monodromy-theorem} thanks to the following lemma.
	
	\begin{lemm}
		\label{determinant:l}
		For every coefficient object $\calE_0$ and every connected finite étale cover $f_0:(Y_0,y)\to (X_0,x)$, we have $\Theta(\calE_0)=\Theta(f_0^*\calE_0)$.
	\end{lemm}
	\begin{proof}
		After taking semi-simplification and twists, we may assume that $\calE_0$ is absolutely irreducible with finite order determinant. In this case, the result is proven in \cite[Proposition 3.6.1]{Dri2} for lisse sheaves. The proof is the same for overconvergent $F$-isocrystals, as they satisfy étale descent by \cite{Ete}.
	\end{proof}
	
	\begin{theo}\label{strong-gmt:t}
		
		Let $\calE_0$ be a $\Qlbar$-coefficient object. The following properties are equivalent.
		\begin{itemize}
			\item[{(i)}] The neutral component of $G(\calE_0,x)/G(\calE,x)$ is unipotent.
			\item[{(ii)}] The radical of $G(\calE_0,x)$ is unipotent.
			
			\item[{(iii)}] The coefficient object $\calE_0$ is untwisted.
		\end{itemize}
		In particular, untwisted coefficient objects form a Tannakian subcategory of $\Coef(X_0,\Qlbar)$.
	\end{theo}
	
	\begin{proof} The implication (i) $\Rightarrow$ (ii) follows from Theorem \ref{global-monodromy-theorem} and the other direction follows from the fact that $G(\calE_0,x)/G(\calE,x)$ is a commutative quotient of $G(\calE_0,x)$. Let us show now that (ii) and (iii) are equivalent as well. If $\calE_0$ is a coefficient object satisfying (ii), all the rank $1$ coefficient objects in $\langle\calE_0\rangle$ have finite order under tensor. In particular, if $\calF_0$ is an irreducible subquotient of $\calE_0$, its determinant has finite order under tensor. This implies that $\calE_0$ is untwisted. Conversely, let us assume that $\calE_0$ is untwisted. Thanks to Proposition \ref{connected-after-cover-proposition}, there exists a connected finite étale cover $f_0:(Y_0,y)\to(X_0,x)$ such that $G(f_0^*\calE_0,y)=G(\calE_0,x)^\circ$. By Lemma \ref{determinant:l}, the inverse image $f_0^*\calE_0$ remains untwisted. This shows that we may assume that $G(\calE_0,x)$ is connected. Note that it is also harmless to assume that $\calE_0$ is semi-simple by passing to the semi-simplifications. We are reduced to showing that the centre $Z$ of $G(\calE_0,x)$ is finite. To prove this we may further assume that $\calE_0$ is irreducible. Indeed, if $\calE_0=\calF_0\oplus\calG_0$ and $Z_1$ and $Z_2$ are the centres of $G(\calF_0,x)$ and $G(\calG_0,x)$ respectively, then $Z\subseteq Z_1\times Z_2$. Therefore, if $Z_1$ and $Z_2$ are finite the same holds for $Z$.

    If we assume that $\calE_0$ is irreducible, the representation of $Z$ on $\omega_{x,\Qlbar}(\calE_0)$ decomposes into a direct sum $\chi^{\oplus r}$ where $r$ is the rank of $\calE_0$ and $\chi$ is a character of $Z$. By construction, this representation is faithful, therefore $\chi$ generates the group of characters of $Z$. On the other hand, we know that $\chi$ is finite because $\calE_0$ is untwisted. This shows that $Z$ is a finite group scheme, as we wanted.
	\end{proof}

	\begin{coro}\label{characters-twistc:c}
		Let $\calE_0$ be a $\Qlbar$-coefficient object.
		\begin{itemize}
			\item[\normalfont (i)] 	For every $\calF_0\in \langle\calE_0\rangle$, we have $\Theta(\calF_0)\subseteq\X(\calE_0)$.
			\item[\normalfont (ii)] The map $X^*(G(\calE_0,x))\to \X(\calE_0)$ which associates to a rank $1$ coefficient object its twist class has finite kernel and cokernel.
		\end{itemize}
		
	\end{coro}
	\begin{proof}
		It is enough to prove (i) for $\calF_0=\calE_0^{\otimes m}\otimes(\calE_0^{\vee})^{\otimes n}$ with $m,n\in \N$. In addition, we may assume that $\calE_0$ admits a unique twist class, hence it can be written as $\calF_0^{(a)}$ with $\calF_0$ untwisted. By Theorem \ref{strong-gmt:t}, the coefficient object $\calF_0^{\otimes m}\otimes(\calF_0^{\vee})^{\otimes n}$ is untwisted. Therefore, $[a^{m-n}]\in \X(\calE_0)$ is the unique twist class of $\calE_0^{\otimes m}\otimes(\calE_0^{\vee})^{\otimes n}$. This proves part (i). For part (ii) we first note that the kernel is finite because untwisted rank $1$ coefficient objects have finite order under tensor. To prove that the cokernel is finite as well it is enough to prove that for every twist class $[a]$ of $\calE_0$, there exists a rank $1$ coefficient object $\calL_0\in \langle\calE_0\rangle$ with twist class $[a^n]$ for some $n\geq 1$. Since, by definition, there exists an irreducible object $\calF_0\in \langle\calE_0\rangle$ with twist class $[a]$, we can pick $\calL_0:=\det(\calF_0)$. 
	\end{proof}

	\subsection{Weights}
	\label{Weights-ss} 
	
	In Weil II Deligne introduced the \textit{theory of weights} for lisse sheaves. The same theory is now available for overconvergent $F$-isocrystals, thanks to the work of Kedlaya in \cite{KedWeil2}. Here the main theorem. 
	
	\begin{theo}[Deligne, Kedlaya]
		\label{theory-of-weights-theorem}
		Let $X_0$ be a smooth geometrically connected variety over $\Fq$ and $\calE_0$ a $\iota$-mixed coefficient object over $X_0$ of $\iota$-weights $\leq w$. If $\alpha$ is an eigenvalue of $F$ acting on $H^n_c(X,\calE)$, then $|\iota(\alpha)|\leq q^{(w+n)/2}$.
	\end{theo}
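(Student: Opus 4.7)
The plan is to prove the statement simultaneously for both types of coefficient objects, following Deligne's strategy from \cite{Weil2} (Théorème 3.3.1) in the étale case and its $p$-adic adaptation carried out by Kedlaya in \cite{KedWeil2}. The argument is structured into three reduction/amplification steps, each of which is formally the same on both sides once the corresponding cohomological formalism (étale versus rigid) is in place.

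First, I would reduce to the case where $\calE_0$ is $\iota$-pure of weight exactly $w'\leq w$. The definition of $\iota$-mixed provides a finite filtration of $\calE_0$ with $\iota$-pure successive quotients of $\iota$-weight $\leq w$. The long exact sequence in compactly supported cohomology (étale or rigid) is $F$-equivariant, so the eigenvalues of $F$ on each $H^n_c(X,\calE)$ are among the eigenvalues of $F$ on the $H^n_c(X,\mathrm{gr}^{\bullet}\calE)$ of the graded pieces. Hence it suffices to prove the bound $|\iota(\alpha)|\leq q^{(w'+n)/2}$ when $\calE_0$ is $\iota$-pure of weight $w'$.

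Second, invoke the Grothendieck--Lefschetz trace formula to express the $L$-function of $\calE_0$ as the alternating product of the characteristic polynomials of $F$ on the $H^n_c(X,\calE)$:
\[
L(X_0,\calE_0,t) \;=\; \prod_{x_0\in |X_0|} \det\bigl(1-t^{\deg(x_0)}F_{x_0}\,\bigl|\,\calE_{x_0}\bigr)^{-1} \;=\; \prod_{n\geq 0}\det\bigl(1-tF\,\bigl|\,H^n_c(X,\calE)\bigr)^{(-1)^{n+1}}.
\]
The existence and meromorphy of $L(X_0,\calE_0,t)$ in the $\ell$-adic setting is classical, and in the $p$-adic setting is a nontrivial input provided by the rigid-cohomological trace formula and the finiteness of rigid cohomology for overconvergent $F$-isocrystals. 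Then one runs the Rankin--Selberg style amplification: for large $k$, the even tensor power $\calE_0^{\otimes 2k}$ is $\iota$-pure of weight $2kw'$, and (after replacing $\calE_0^{\otimes 2k}$ with $\calE_0^{\otimes 2k}\otimes(\calE_0^\vee)^{\otimes 2k}$ or a similar positive combination) its local Euler factors have non-negative coefficients after applying $\iota$. A convergence/positivity argument on the Euler product, combined with the factorization above, forces the eigenvalues of $F$ on $H^n_c(X,\calE^{\otimes 2k})$ to be bounded by $q^{(2kw'+n)/2}$ up to a uniform constant. Taking $k$-th roots and letting $k\to\infty$ yields the desired inequality for $\calE_0$ itself.

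The main obstacle lies squarely in the $p$-adic case: executing the above blueprint requires the full package of rigid cohomology with coefficients in overconvergent $F$-isocrystals (finiteness, Poincar\'e duality, K\"unneth, trace formula, and meromorphy of $L$-functions), together with a sufficiently strong monodromy theorem controlling the behaviour at the boundary of a compactification. These inputs are precisely what Kedlaya establishes in \cite{KedWeil2}, building on Crew's global monodromy theorem (our Theorem~\ref{global-monodromy-theorem}) and the comparison results for rigid cohomology. Once these tools are available, the Rankin--Selberg amplification of Weil~II transports verbatim to the $p$-adic setting, and the statement follows uniformly for both types of coefficient objects.
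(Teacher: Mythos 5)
The paper's own ``proof'' of Theorem~\ref{theory-of-weights-theorem} is a bare citation to \cite{Weil2} for lisse sheaves and to \cite{KedWeil2} for overconvergent $F$-isocrystals; it reprises nothing of the internal argument. Your proposal points to exactly the same sources and reconstructs (at a blueprint level) the Weil~II strategy: reduce to the $\iota$-pure case via the weight filtration, express the $L$-function via the trace formula, and amplify via a positivity/Rankin--Selberg argument. This is a reasonable reconstruction, and since the paper delegates the entire content to the literature, you are effectively in agreement with it.

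One point of imprecision is worth flagging. You write that ``the Rankin--Selberg amplification of Weil~II transports verbatim to the $p$-adic setting,'' crediting Kedlaya with carrying out this transport. In fact \cite{KedWeil2} follows Laumon's route to Weil~II, via the $p$-adic (Huyghe) Fourier transform and the principle of stationary phase on $\mathbb{A}^1$, rather than a direct transplant of Deligne's original Rankin--Selberg amplification; the title of Kedlaya's paper already signals this. Relatedly, your sketch elides the dimension reduction (fibering over curves and running the Leray spectral sequence, or the Fourier-theoretic reduction to $\mathbb{A}^1$) that is necessary in both the $\ell$-adic and $p$-adic arguments before the positivity step can bite. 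Neither of these invalidates your answer --- you are ultimately deferring to the cited works, as the paper does --- but the description of what those works actually do should be corrected.
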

	\begin{proof}
		For lisse sheaves this is \cite[Corollaire 3.3.5]{Weil2}. For overconvergent $F$-isocrystals it is proven by Kedlaya in \cite[Theorem 6.6.2]{KedWeil2}.
	\end{proof}
	
	
	\begin{coro}
		\label{filt-geo-semi:c}Let $X_0$ be a smooth variety. The following statements are true.
		\begin{itemize}
		\item[{\normalfont(i)}]For every $\iota$-mixed coefficient object there exists an increasing filtration $$0=W_{-1}(\calE_0)\subsetneq W_0(\calE_0)\subsetneq\cdots \subsetneq  W_n(\calE_0)=\calE_0$$ where for every $0\leq i\leq n$, the quotient $W_i(\calE_0)/W_{i-1}(\calE_0)$ is $\iota$-pure of weight $w_i$ and $w_0<w_1<\dots<w_n$.
		\item[{\normalfont(ii)}]	Every $\iota$-pure coefficient object is geometrically semi-simple. Conversely, every $\iota$-mixed geometrically semi-simple coefficient object is a direct sum of $\iota$-pure coefficient objects.
		\end{itemize}
		
	\end{coro}
\begin{proof}
This follows from Theorem \ref{theory-of-weights-theorem} as proved in {\cite[Théorème 3.4.1]{Weil2}}.
\end{proof}

	For every $\Qlbar$-coefficient object $\calE_0$ on $X_0$, we can put together all the characteristic polynomials at closed points and form a formal series $$L_{X_0}(\calE_0,t):=\prod_{x_0\in |X_0|}P_{x_0}(\calE_0,t^{\deg(x_0)})^{-1}\in \Qlbar[[t]].$$ This is called the \textit{$L$-function} of $\calE_0$.

	\begin{theo}[Trace formula]
		\label{Trace-formula-theorem}
		If $X_0$ is geometrically connected over $\Fq$, for every coefficient object $\calE_0$ we have
		$$L_{X_0}(\calE_0,t)=\prod_{i=1}^{2d}\det(1-Ft,H^i_c(X,\calE))^{(-1)^{i+1}}.$$
	\end{theo}
	\begin{proof}
		For lisse sheaves, this is the classical Grothendieck's formula, in the $p$-adic case see \cite[Théorème 6.3]{EteLF}.
	\end{proof}
	Thanks to the theory of weights, this formula can be used to compare the global sections of compatible coefficient objects. The theory of weights is needed to control the possible cancellations between the factors of the numerator and the denominator.
	
	\begin{prop}[{\normalfont\cite[Cor. VI.3]{Laf}, \cite[Prop. 4.3.3]{Abe}}]\label{independence-triviality:p}
		Let $X_0$ be a smooth geometrically connected variety over $\F_q$ of dimension $d$. For every $\iota$-pure coefficient object $\calE_0$ of $\iota$-weight $w$, the dimension of $H^0(X,\mathcal{E})$ is equal to the number of poles of $\iota(L(X_0,\calE^{\vee}_0(d)))$, counted with multiplicity, with absolute value $q^{w/2}$. If we also assume $\calE_0$ to be semi-simple, the dimension of $H^0(X_0,\mathcal{E}_0)$ is equal to the order of the pole of $L(X_0,\calE^{\vee}_0(d))$ at $1$.
	\end{prop}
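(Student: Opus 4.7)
The plan is to compare the two sides by writing down the trace formula for $\calE_0^{\vee}(d)$ and using the weight bound together with Poincaré duality to isolate the contribution of $H^{2d}_c(X,\calE^{\vee}(d))$, which by duality recovers $H^0(X,\calE)$. The key point is that the $\iota$-purity of $\calE_0$ forces every Frobenius eigenvalue on $H^0(X,\calE)$ to have $\iota$-absolute value exactly $q^{w/2}$, so a single factor of the trace formula accounts for \emph{all} poles of $\iota(L(X_0,\calE_0^{\vee}(d)))$ at that absolute value, and no cancellation with the numerator can occur.

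First I would apply Theorem~\ref{Trace-formula-theorem} to $\calE_0^{\vee}(d)$, which is $\iota$-pure of weight $-w-2d$, and observe via Theorem~\ref{theory-of-weights-theorem} that every root $t_0$ of $\det(1-Ft,H^n_c(X,\calE^{\vee}(d)))$ satisfies
\[
|\iota(t_0)|\geq q^{(w+2d-n)/2}.
\]
For every $n<2d$ this bound is strictly greater than $q^{w/2}$, so the only factor that can contribute a zero at $|\iota(t_0)|=q^{w/2}$ is $\det(1-Ft,H^{2d}_c(X,\calE^{\vee}(d)))$, sitting in the denominator, and no numerator factor can cancel it. Consequently, the number of poles of $\iota(L(X_0,\calE_0^{\vee}(d)))$ at arguments of absolute value $q^{w/2}$, counted with multiplicity, equals the number of $F$-eigenvalues on $H^{2d}_c(X,\calE^{\vee}(d))$ of $\iota$-absolute value exactly $q^{-w/2}$.

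Next, Poincaré duality---classical for lisse sheaves, known in the $p$-adic case after Kedlaya and Caro---yields a perfect $F$-equivariant pairing between $H^{2d}_c(X,\calE^{\vee}(d))$ and $H^0(X,\calE)$, so Frobenius eigenvalues on the two spaces are mutually inverse. To finish (i) I must show that every $F$-eigenvalue $\alpha$ on $H^0(X,\calE)$ satisfies $|\iota(\alpha)|=q^{w/2}$. Using the Weil-group description of coefficient objects (standard for lisse sheaves and available for overconvergent $F$-isocrystals via Remark~\ref{isomorphism-fibre-funct:r}), for any $\gamma$ in the Frobenius conjugacy class at a closed point $x_0'$ of degree $n$ the element $\gamma F^{-n}$ lies in the geometric fundamental group; hence $F_{x_0'}$ acts as $F^n$ on $H^0(X,\calE)$, so $\alpha^n$ occurs as an eigenvalue of $F_{x_0'}$ on the stalk $\calE_{x_0'}$, and $\iota$-purity of $\calE_0$ forces $|\iota(\alpha^n)|=q^{nw/2}$. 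This identifies the count of poles with $\dim H^{2d}_c(X,\calE^{\vee}(d))=\dim H^0(X,\calE)$ and establishes (i).

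For (ii), when $w\neq 0$ both sides vanish by the same weight argument, so I may assume $w=0$. The analysis of (i) then gives that the order of the pole of $L(X_0,\calE_0^{\vee}(d))$ at $t=1$ equals the algebraic multiplicity of $1$ as an $F$-eigenvalue on $H^0(X,\calE)$, while $\dim H^0(X_0,\calE_0)$ is the dimension of the $F$-fixed subspace. Hence it suffices to check that $F$ acts semisimply on $H^0(X,\calE)$ once $\calE_0$ is semisimple: decomposing $\calE_0=\bigoplus_i \calF_{i,0}$ into arithmetic simple summands, each $H^0(X,\calF_i)$ is a representation of the abelian quotient of the relevant Weil-type group by the geometric fundamental group, so it splits into characters and $F$ acts diagonalizably on $H^0(X,\calE)=\bigoplus_i H^0(X,\calF_i)$. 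The main obstacle I expect is the weight bookkeeping in the second paragraph ruling out cancellation at the borderline absolute value $q^{w/2}$; once that is in place, the remaining assertions are essentially formal.
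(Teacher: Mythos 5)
Your argument tracks the paper's proof almost step for step: apply the trace formula to $\calE_0^{\vee}(d)$, use the weight bound from Theorem~\ref{theory-of-weights-theorem} to show that $\det(1-Ft,H^{2d}_c(X,\calE^{\vee}(d)))$ is relatively prime to the other factors, and invoke Poincar\'e duality to identify its degree with $\dim H^0(X,\calE)$. You in fact fill in a detail the paper only asserts, namely that the Frobenius eigenvalues on $H^0(X,\calE)$ have $\iota$-absolute value \emph{exactly} $q^{w/2}$, via the observation that $F^{\deg x_0'}$ and $F_{x_0'}$ coincide on $\pi_1^{\et}(X,x)$-invariants; this is needed to rule out eigenvalues of strictly smaller weight in $H^{2d}_c$, and it is good that you made it explicit. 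The one genuine weak point is in part (ii), in the justification that $F$ acts semi-simply on $H^0(X,\calE)$. You write that $H^0(X,\calF_i)$ ``is a representation of the abelian quotient \ldots so it splits into characters,'' but a representation of an abelian (pro-)group need not split into characters -- a single Jordan block for $\Z$ is the standard counterexample -- so the deduction is invalid as stated. The conclusion is still correct, and there are two quick repairs consistent with your reduction to simple summands: either note that the maximal constant sub-object of a simple $\calF_{i,0}$ is $0$ or all of $\calF_{i,0}$, so if $H^0(X,\calF_i)\neq 0$ then $\calF_{i,0}$ is a \emph{simple} constant coefficient object and $F$ acts irreducibly (hence semi-simply) on it; or observe that the action of $F$ on $H^0(X,\calF_i)$ factors through $G(\calF_{i,0},x)^{\cst}$, which is a quotient of the reductive group $G(\calF_{i,0},x)$ and is therefore reductive and commutative, so all its representations are semi-simple. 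Either fix closes the gap, and with it the proof matches the paper's.
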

	
	\begin{coro}\label{independence-triviality:c}
		Let $X_0$ be a smooth geometrically connected pointed variety over $\Fq$. Let $\calE_0$ and $\calF_0$ be $E$-compatible coefficient objects and suppose that $\Qlbar$-coefficient object $\calE_0$ is $\iota$-mixed. The following statements are true.
		\begin{itemize}
			\item[\normalfont(i)] If $\calE_0$ and $\calF_0$ are geometrically semi-simple, then $\dim(H^0(X,\calE))=\dim(H^0(X,\calF))$.
			\item[\normalfont(ii)] If $\calE$ is absolutely irreducible the same is true for $\calF$.
			\item[\normalfont(iii)] If $\calE_0$ and $\calF_0$ are semi-simple, then $\dim(H^0(X_0,\calE_0))=\dim(H^0(X_0,\calF_0))$.			\item[\normalfont(iv)] If $\calE_0$ is absolutely irreducible the same is true for $\calF_0$.
		\end{itemize}
	\end{coro}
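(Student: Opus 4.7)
The plan is to reduce both statements to the $\iota$-pure case and apply Proposition~\ref{independence-triviality:p}, using that compatible coefficient objects have the same $L$-function. First I would observe that $E$-compatibility implies $P_{x_0}(\calE_0,t)=P_{x_0}(\calF_0,t)$ for every closed point $x_0$, so the Frobenius eigenvalues coincide and in particular $\calF_0$ inherits the $\iota$-mixedness of $\calE_0$, with the same set of $\iota$-weights.

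For (i), under the assumption that both objects are geometrically semi-simple and $\iota$-mixed, Corollary~\ref{geometric-semi-simplicity-corollary} lets me decompose $\calE_0=\bigoplus_w \calE_{0,w}$ and $\calF_0=\bigoplus_w \calF_{0,w}$ as direct sums of $\iota$-pure coefficient objects indexed by the weight $w$. The key intermediate claim is that $\calE_{0,w}$ and $\calF_{0,w}$ are again $E$-compatible. This follows from the identity $P_{x_0}(\calE_0,t)=\prod_w P_{x_0}(\calE_{0,w},t)$, since the factor of $\iota$-weight $w$ on each side of the equation $P_{x_0}(\calE_0,t)=P_{x_0}(\calF_0,t)$ is uniquely determined by the absolute values $|\iota(\cdot)|=q^{w\deg(x_0)/2}$ of its roots. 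Now for each weight $w$, I apply Proposition~\ref{independence-triviality:p} to the $\iota$-pure object $\calE_{0,w}$ and to $\calF_{0,w}$: since $L_{X_0}(\calE_{0,w}^{\vee}(d),t)=L_{X_0}(\calF_{0,w}^{\vee}(d),t)$, the two dimensions $\dim H^0(X^{(x)},\calE_w)$ and $\dim H^0(X^{(x)},\calF_w)$ agree, both being the number of poles of $\iota$-absolute value $q^{w/2}$. Summing over $w$ and using the direct sum decomposition yields (i).

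For (ii), I first reduce to the hypotheses of (i): if $\calE_0$ is semi-simple and $\iota$-mixed, each irreducible summand of $\calE_0$ is $\iota$-mixed, hence $\iota$-pure (by Corollary~\ref{filtr:c} the weight filtration of an irreducible object collapses), and therefore geometrically semi-simple by Corollary~\ref{geometric-semi-simplicity-corollary}. The same argument applies to $\calF_0$, which is also semi-simple and $\iota$-mixed by the compatibility argument above. Thus both objects are geometrically semi-simple and decompose into compatible $\iota$-pure summands as in (i). For each weight $w$, $\calE_{0,w}$ and $\calF_{0,w}$ are $\iota$-pure and semi-simple with equal $L$-functions, so by the second statement of Proposition~\ref{independence-triviality:p} the common order of the pole of $L_{X_0}(\calE_{0,w}^{\vee}(d),t)=L_{X_0}(\calF_{0,w}^{\vee}(d),t)$ at $t=1$ computes $\dim H^0(X_0,\calE_{0,w})=\dim H^0(X_0,\calF_{0,w})$. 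Summing over $w$ gives (ii).

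The main obstacle I expect to confront is the compatibility of the pure pieces, i.e.\ showing that the weight decomposition of a coefficient object on $X_0$ can be read off from the characteristic polynomial function in a manner compatible with $E$-rationality. Once this purely bookkeeping step is settled by the canonical factorization of $P_{x_0}$ by $\iota$-weight, everything else follows directly from Proposition~\ref{independence-triviality:p}, so no further input from the theory of weights is needed beyond the filtration of Corollary~\ref{filtr:c} and the splitting provided by Corollary~\ref{geometric-semi-simplicity-corollary}.
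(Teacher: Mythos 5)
Your proof is correct and follows the paper's argument in all essentials: choose an embedding $\iota'$ of the scalar field of $\calF_0$ agreeing with $\iota$ on $E$ (a step you should make explicit, since the Frobenius eigenvalues of the two companions live in different coefficient fields and ``coincide'' only through this identification), decompose both objects into matching $\iota$-pure (resp.\ $\iota'$-pure) pieces, and apply Proposition~\ref{independence-triviality:p} to each pair. The only difference is that the paper reaches the pure pieces via the weight filtration of Corollary~\ref{filtr:c} and then splits it using (geometric) semi-simplicity, while you decompose directly via Corollary~\ref{geometric-semi-simplicity-corollary}---a cosmetic variation.
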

	\begin{proof}Let $\Qlpbar$ be the algebraic closure of the field of scalars of $\calF_0$ and let $\iota':\Qlpbar\iso\C$ be an isomorphism which agrees with $\iota$ when restricted to $E$. The $\Qlpbar$-coefficient object $\calF_0$ is $\iota'$-mixed and its $\iota'$-weights are equal to the $\iota$-weights of $\calE_0$. In addition, if $W_*(\calE_0)$ and $W_*(\calF_0)$ are the weight filtrations of Corollary \ref{filt-geo-semi:c}.(i), for each $i$ the quotients $W_i(\calE_0)/W_{i-1}(\calE_0)$ and $W_i(\calF_0)/W_{i-1}(\calF_0)$ are $E$-compatible. In order to prove part (i), it is enough to show that statement for these subquotients. But for them, (i) follows from Proposition \ref{independence-triviality:p}. In addition, applying part (i) to $\End(\calE_0)$ and $\End(\calF_0)$, we also get part (ii). For part (iii) and (iv) one argues similarly.
	\end{proof}

Thanks to the theory of weights one can even prove the following result. 
	
	\begin{prop}[{\cite[A.3]{Abe}}]
		\label{Chebotarev-proposition}
		Two $\iota$-mixed $\Qlbar$-coefficient objects with the same characteristic polynomials at closed points have isomorphic semi-simplifications.
	\end{prop}

	\begin{rema} We will see later that every coefficient object is actually $\iota$-mixed (Corollary~\ref{ever-is-mixe:c}). Therefore, Corollary \ref{independence-triviality:c} and Proposition \ref{Chebotarev-proposition} can be applied to every coefficient object. Note that for lisse sheaves Proposition \ref{Chebotarev-proposition} is classically obtained, without the theory of weights, as a consequence of Chebotarev's density theorem, \cite[Theorem~7]{Cheb}.
	\end{rema}
	
	\subsection{Deligne's conjecture}
	\label{Deligne-conjectures-ss}
	We are ready now to present the state of art of \cite[Conjecture 1.2.10]{Weil2}\footnote{We decided to omit here the part of the conjecture on the $p$-adic valuations of the Frobenius eigenvalues at closed points. Moreover, we will work with smooth varieties, even if the conjecture was originally stated for normal varieties.}. See also \cite{AE} and \cite{Ked} for other overviews. The extension of the statement to overconvergent $F$-isocrystals was firstly proposed by Crew in \cite[Conjecture 4.13]{CrewMon}. This corresponds to the choice of the category of overconvergent $F$-isocrystals as a possible candidate for Deligne's “petits camarades cristallins”. 
	\begin{conj}
		\label{Deligne-conjectures}
		Let $X_0$ be a smooth variety over $\F_q$, let $\ell$ be a prime number and let $\calE_0$ be an irreducible $\Qlbar$-coefficient object whose determinant has finite order. The following statements hold.
		\begin{enumerate}
			\item[{(i)}] $\calE_0$ is pure of weight $0$.
			\item[{(ii)}] There exists a number field $E \subseteq \Qlbar$ such that $\calE_0$ is $E$-rational.
			\item[{(iii)}] $\calE_0$ is $p$-plain.
			\item[{(iv')}] If $E$ is a number field as in (ii), then for every prime $\ell'$ (even $\ell'=\ell$ or $\ell'=p$) and for every inclusion $\tau: E \hookrightarrow \Qlpbar$, there exists an absolutely irreducible $\Qlpbar$-coefficient object, $E$-rational with respect to $\tau$, which is $E$-compatible with $\calE_0$.
		\end{enumerate}
	\end{conj}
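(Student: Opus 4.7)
The plan is to follow the strategy already sketched in the introduction: reduce each statement geometrically to the case of a smooth curve, where the Langlands correspondence is available (Lafforgue in the étale case, Abe in the $p$-adic case), and then propagate the consequences back to the higher-dimensional $X_0$ using Deligne's method and Drinfeld's gluing. Crucially, the curve-slicing result alluded to as Theorem \ref{dom-curve-theorem} lets me replace the usual Lefschetz theorem, which fails in positive characteristic: for a fixed absolutely irreducible $\calE_0$ I can find a smooth curve $C_0 \hookrightarrow X_0$, passing through any prescribed closed point, such that the restriction $\calE_0|_{C_0}$ remains absolutely irreducible with finite order determinant.

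For parts (i) and (iii), since being pure of weight $0$ and being $p$-plain are conditions on the Frobenius eigenvalues at closed points of $X_0$, they can be checked after restriction to such a $C_0$; on the curve the Langlands correspondence of Lafforgue (for lisse sheaves) or Abe (for overconvergent $F$-isocrystals) identifies $\calE_0|_{C_0}$ with a cuspidal automorphic representation, from which purity of weight $0$ and $p$-plainness are immediate consequences of the known estimates on Hecke eigenvalues. For part (iv'), in the étale-to-étale direction across varying $\ell' \neq p$, I would invoke Drinfeld's gluing theorem together with the Lafforgue companions on curves to produce companions over $X_0$; in the étale-to-$p$-adic direction, the same gluing strategy, refined by Abe--Esnault and Kedlaya, constructs a compatible overconvergent $F$-isocrystal. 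For the remaining $p$-adic-to-étale direction, I would start from Abe's companions on suitable curves in $X_0$ and attempt to glue them into a global object on $X_0$. Once (iv') is established, part (ii) follows from the étale case: Deligne's theorem \cite{DelFin} gives $E$-rationality for lisse sheaves in any dimension, and the $E$-compatibility provided by (iv') then transfers the conclusion to an overconvergent $F$-isocrystal companion.

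The principal obstacle is part (iv') in the direction from overconvergent $F$-isocrystals to lisse sheaves in dimension at least two: here one lacks a Drinfeld-style gluing theorem on the $p$-adic side, and the Lefschetz-type results available for overconvergent $F$-isocrystals are not strong enough to reduce the construction of an $\ell$-adic companion to the curve case uniformly over $X_0$. A secondary difficulty is that applying the dominant-curve reduction to prove (i)--(iii) uses absolute irreducibility, so before everything else one must pass to the semisimplification and untwist by a rank $1$ object using Corollary \ref{rank-1-finite-order:c}, so that the standing hypothesis of the conjecture (absolutely irreducible with finite-order determinant) is genuinely available on the chosen curve. In practice I would therefore concede that the conjecture, as stated for arbitrary coefficient objects over arbitrary smooth $X_0$, is still open, and my contribution would be to record precisely which implications among (i)--(iv') and which cases across the pair $(\ell,\ell')$ are now theorems, as the paper proceeds to do in §\ref{Deligne-conjectures-ss}.
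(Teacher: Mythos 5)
You correctly recognize that the statement is a conjecture, still open in general, and that the proper task is to record which parts are now theorems and which gap remains; this is exactly what the paper does in \S\ref{Deligne-conjectures-ss} via Theorems~\ref{Deligne-conjecture-general-theorem} and \ref{weak-companions-smooth-theorem}. The overall plan — reduce to curves, invoke Lafforgue and Abe there, propagate to $X_0$ via Deligne's finiteness and Drinfeld's gluing — is also the paper's strategy.

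However, you have the directionality of the known and open parts of (iv') reversed, and this is not a cosmetic slip. You assert that the \'etale-to-$p$-adic direction is handled by Abe--Esnault and Kedlaya, and that the $p$-adic-to-\'etale direction is the principal obstacle. It is the other way around: Abe--Esnault and Kedlaya construct lisse sheaves \emph{from} overconvergent $F$-isocrystals (this is what Theorem~\ref{weak-companions-smooth-theorem} records, and why it is stated only for $\ell' \neq p$). The case that remains open is $\ell' = p$ in dimension $\geq 2$, i.e.\ producing overconvergent $F$-isocrystal companions, precisely because no $p$-adic analogue of Drinfeld's gluing theorem is available. So when you later ``concede that the conjecture is still open'' you have correctly diagnosed the existence of a gap but misidentified its location; your derivation of (ii) via (iv') happens to survive only because the direction it actually needs ($p$-adic to \'etale) is the known one.

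There is a second, smaller circularity you should address. You propose to prove (i) and (iii) by slicing to a curve via Theorem~\ref{dom-curve-theorem} and invoking Langlands there. For lisse sheaves this is exactly what the paper does, but the paper's version of the curve-slicing statement for overconvergent $F$-isocrystals is \emph{conditional on $\iota$-purity} — which is the content of part (i). The paper avoids the circle by first establishing Conjecture~\ref{everything-iota-mixed:c} for $p$-adic objects directly (Abe--Esnault, Kedlaya) and only afterwards using the Lefschetz reduction for (iii). If you apply the curve-slicing result unconditionally, as your sketch does, the argument for (i) in the $p$-adic case begs the question.
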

Conjecture \ref{Deligne-conjectures}.(iv') is commonly known as the\textit{ companions conjecture}, since compatible coefficient objects are often called \textit{companions}. Deligne originally proposed a stronger variant of (iv') which we shall discuss later in §\ref{compatible-systems-ss}. When $X_0$ is a smooth curve, Conjecture \ref{Deligne-conjectures} is proven using the Langlands program. More precisely, it follows from the Langlands reciprocity conjecture for $\GL_r$ over function fields and the Ramanujan--Petersson conjecture, both proven by L. Lafforgue, and from the $p$-adic analogue of the Langlands reciprocity conjecture for overconvergent $F$-isocrystals, obtained by Abe.
	\begin{theo}[{\normalfont\cite[Théorème VII.6]{Laf}, \cite[§4.4]{Abe}}]
		\label{Langlands-theorem}
		If $X_0$ is a smooth curve, Conjecture \ref{Deligne-conjectures} is true.
	\end{theo}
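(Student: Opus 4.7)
The plan is to deduce the four parts of Conjecture~\ref{Deligne-conjectures}, and then Conjecture~\ref{everything-iota-mixed:c}, by unpacking the Langlands correspondence for $\mathrm{GL}_r$ over the function field $F$ of $X_0$. First I would use Corollary~\ref{rank-1-finite-order:c} to reduce to the case where $\calE_0$ is absolutely irreducible with finite order determinant: a twist by $a\in \Qlbar^\times$ shifts the weight by $2\log_q|\iota(a)|$ and changes the types by $[a]$, so controlling the constant subobjects of $\langle \calE_0 \rangle$ via the class field theory input of Theorem~\ref{rank-1-finite-order:t} suffices to transport each conclusion across the twist.

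In the étale case $\ell \neq p$, I would invoke Lafforgue's reciprocity (\cite[Théorème~VII.6]{Laf}): there is a bijection between isomorphism classes of absolutely irreducible Weil lisse $\Qlbar$-sheaves of rank $r$ on $X_0$ with finite order determinant and cuspidal automorphic representations of $\mathrm{GL}_r(\mathbb{A}_F)$ with matching central character type, such that Frobenius eigenvalues at closed points correspond to the Satake parameters at unramified places. From this I extract:
\begin{itemize}
    \item[(i)] follows from Lafforgue's proof of the Ramanujan--Petersson conjecture, which states that the Satake parameters of such cuspidal representations are Weil numbers of weight $0$;
    \item[(ii)] follows because a cuspidal automorphic representation with finite order central character is defined over a number field $E\subseteq \Qlbar$ (generated by its Hecke eigenvalues at a cofinite set of places);
    \item[(iii)] follows from the integrality of Hecke eigenvalues at places not above $p$;
    \item[(iv')] follows by varying the place $\lambda$ of $E$ of residue characteristic $\ell'\neq p$: the cuspidal automorphic representation gives, through Lafforgue's correspondence applied with a different $\ell'$, an $E$-compatible $\Qlpbar$-coefficient object.
\end{itemize}

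For the remaining admissible place, namely $\ell'=p$, I would invoke Abe's $p$-adic Langlands correspondence for overconvergent $F$-isocrystals (\cite[§4.4]{Abe}), which assigns to each such cuspidal automorphic representation an absolutely irreducible overconvergent $F$-isocrystal with the same Hecke/Frobenius eigenvalues at closed points. This produces the $p$-adic companion of $\calE_0$ and completes (iv'); the same argument gives (i), (ii), (iii) when the initial $\calE_0$ is itself an overconvergent $F$-isocrystal, since Abe's correspondence also incorporates a version of Ramanujan--Petersson. Finally, Conjecture~\ref{everything-iota-mixed:c} is a formal consequence of (i) by Proposition~\ref{iotamixed-p}.

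The genuine hard work is entirely contained in the theorems of Lafforgue and Abe, which involve the trace formula, the geometry of moduli spaces of shtukas, and, on the $p$-adic side, the theory of arithmetic $\mathscr{D}$-modules. Given those inputs, the task of this theorem is purely bookkeeping: verifying that each clause of Conjecture~\ref{Deligne-conjectures} is visible from the matching of characteristic polynomials at closed points that the Langlands correspondence delivers, together with the reduction via Corollary~\ref{rank-1-finite-order:c} to the absolutely irreducible case with finite order determinant.
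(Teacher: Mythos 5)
Your proposal is correct and matches the paper's treatment: Theorem~\ref{Langlands-theorem} is in the paper essentially a citation of Lafforgue's and Abe's Langlands correspondences together with the Ramanujan--Petersson bound, with Conjecture~\ref{everything-iota-mixed:c} drawn out via Proposition~\ref{iotamixed-p}, which is exactly the skeleton you describe. One small redundancy: Conjecture~\ref{Deligne-conjectures} is already stated for an absolutely irreducible $\calE_0$ with finite order determinant, so the opening reduction via Corollary~\ref{rank-1-finite-order:c} is not needed for parts (i)--(iv'); it only enters when passing to Conjecture~\ref{everything-iota-mixed:c}, and there it is already absorbed into Proposition~\ref{iotamixed-p}.
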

	
	The extension of the results to higher dimensional varieties is performed via a reduction to curves. One of the key ingredients is a “Lefschetz theorem” for coefficient objects. 
	
	\begin{theo}[Katz, Abe--Esnault]\label{dom-curve-theorem}
		Let $X_0$ be a smooth geometrically connected variety over $\F_q$. For every lisse sheaf $\calE_0$ over $X_0$ and every reduced finite closed subscheme $S_0\subseteq X_0$, there exists a smooth geometrically connected curve $C_0$ and a morphism $f_0:C_0\to X_0$ with a section $S_0\to C_0$, such that the inverse image functor $\langle \calE_0 \rangle \to \langle f_0^*\calE_0 \rangle$ is an equivalence of categories. The same is true when $\calE_0$ is a $\iota$-pure overconvergent $F$-isocrystal.
	\end{theo}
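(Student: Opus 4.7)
The approach is to reformulate the statement, via the Tannakian formalism, as a question about monodromy groups, and then produce the curve $C_0$ by Bertini-type methods (for lisse sheaves) or by passing to an $\ell$-adic companion (for overconvergent $F$-isocrystals).

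\textbf{Tannakian reduction.} Since $\calE_0$ is a tensor generator of $\langle\calE_0\rangle$ and $f_0^*\calE_0$ of $\langle f_0^*\calE_0\rangle$, the inverse image functor corresponds to a morphism $\iota_{f_0}:G(f_0^*\calE_0,y)\to G(\calE_0,x)$ which is automatically a closed immersion, and the functor is an equivalence precisely when $\iota_{f_0}$ is an isomorphism. Using the fundamental exact sequence of Proposition~\ref{fun-exact-seq:p}, this splits into an isomorphism on the geometric monodromy $G(f^*\calE,y)\iso G(\calE,x)$ together with one on the constant quotient $G(f_0^*\calE_0,y)^{\cst}\iso G(\calE_0,x)^{\cst}$; by Proposition~\ref{fun-exact-seq:p}.(v) the latter is arranged by requiring $k_{C_0}=k_{X_0}$. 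The problem is thus reduced to producing a smooth curve $f_0:C_0\to X_0$ through $S_0$, geometrically connected over $k_{X_0}$, such that the image of $\pi_1^{\et}(C,y)$ in $G(\calE,x)(E_\lambda)$ (and of $W(C_0,y)$ in $G(\calE_0,x)(E_\lambda)$) is still Zariski-dense.

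\textbf{Lisse case.} For lisse sheaves this is a classical Bertini-type result, provable in the spirit of the Gabber--Katz space-filling-curve theorem. One iteratively cuts $X_0$ by hyperplane sections, using generalized Bertini theorems that preserve the surjectivity of the fundamental group onto appropriate finite quotients, until one reaches a curve; the property that $\pi_1^{\et}(C,y)$ have Zariski-dense image in $G(\calE,x)$ is detected in a finite quotient controlling maximal proper closed subgroups up to conjugation, and the passage through $S_0$ is imposed as an additional finite condition at each step.

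\textbf{Extension to $\iota$-pure $F$-isocrystals.} In the $p$-adic case the Tannakian group is not pro-finite, so Bertini cannot be applied directly to it; the strategy is to reduce to the lisse case via companions. By Corollary~\ref{rank-1-finite-order:c}, up to semi-simplification and twist we may assume $\calE_0$ is absolutely irreducible with finite-order determinant; then by the known cases of the companions conjecture extending Theorem~\ref{Langlands-theorem} (Abe--Esnault, Kedlaya), $\calE_0$ admits an $\ell$-adic companion $\calV_0$. Apply the lisse case to $\calV_0$ to obtain $f_0:C_0\to X_0$ through $S_0$, and argue that the same curve works for $\calE_0$: by Proposition~\ref{Cebotarev-proposition} the semi-simplifications of corresponding objects in $\langle\calE_0\rangle$ and $\langle\calV_0\rangle$ are determined by their shared characteristic polynomial functions, while by Proposition~\ref{independence-triviality:p} and Corollary~\ref{independence-triviality:c} the dimensions $\dim H^0(X,-)$ agree for all objects of $\langle\calE_0\rangle\otimes\langle\calE_0\rangle^{\vee}$ matched with their counterparts in $\langle\calV_0\rangle$. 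The Tannakian lemma \cite[Lemma 1.6]{AE} then packages these invariant-dimension equalities into the required full faithfulness and closure under subobjects, forcing $\iota_{f_0}$ to be an isomorphism. The principal obstacle is precisely this last step: the absence of a pro-finite group classifying overconvergent $F$-isocrystals forces a detour through $\ell$-adic companions, and the $\iota$-pure hypothesis is crucial, both to reduce to a finite-order-determinant situation where a companion exists and to invoke the weight-theoretic Proposition~\ref{independence-triviality:p}.
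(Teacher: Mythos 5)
The paper's own ``proof'' of this theorem is only a pointer to the literature (Katz's space-filling curves for lisse sheaves, Abe--Esnault's Theorem~3.10 for $F$-isocrystals), so there is no detailed argument to match yours against; I will compare your proposal with what those references actually do.

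Your Tannakian reduction and your sketch of the lisse case are sound and faithful in spirit to Katz: one reformulates the statement as surjectivity of the closed immersion of monodromy groups, and then produces the curve by Bertini, controlling the geometric part in a finite quotient and the constant part by requiring $k_{C_0}=k_{X_0}$.

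The $p$-adic part, however, contains a genuine logical problem: it is circular. You invoke the existence of an $\ell$-adic companion of $\calE_0$ on the higher-dimensional $X_0$ --- that is, Theorem~\ref{weak-companions-smooth-theorem} --- but the Abe--Esnault proof of that theorem \emph{proves and uses} Theorem~\ref{dom-curve-theorem} for overconvergent $F$-isocrystals as a key step (this is made explicit in the discussion following Theorem~\ref{weak-companions-smooth-theorem}). So you would be deriving the $p$-adic Lefschetz theorem from a companion whose construction already presupposes it. Kedlaya's alternative proof of companions only needs a weaker Lefschetz statement, so there is in principle a non-circular route ``weak Lefschetz $\Rightarrow$ companions $\Rightarrow$ full Lefschetz,'' but your proposal lumps the two sources together and never addresses the circularity, which is the crux of the matter. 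By contrast, Abe--Esnault's actual argument for their Theorem~3.10 proceeds \emph{directly}, without companions on $X_0$: the $\iota$-pure hypothesis is used to put the theory of weights (Theorem~\ref{theory-of-weights-theorem}, Proposition~\ref{independence-triviality:p}) at one's disposal for $\calE_0$ itself, and their Tannakian Lemma~1.6 together with Theorem~\ref{rank-1-finite-order:t} and Katz's curves are used to force surjectivity of the monodromy morphism. Your companion-transfer step is essentially a re-derivation of Theorem~\ref{isoindipendent:t} of this paper, which is logically \emph{downstream} of Theorem~\ref{dom-curve-theorem} and cannot be used to prove it.

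A secondary gap: you propose to reduce to $\calE_0$ absolutely irreducible with finite-order determinant ``up to semi-simplification and twist.'' A twist does not change $\langle\calE_0\rangle\to\langle f_0^*\calE_0\rangle$ being an equivalence, but arithmetic semi-simplification does change $\langle\calE_0\rangle$ and hence the statement to be proved; an $\iota$-pure object is geometrically semi-simple (Corollary~\ref{geometric-semi-simplicity-corollary}) but need not be arithmetically semi-simple, and you give no argument that the equivalence for $\langle\calE_0^{\sms}\rangle$ implies it for $\langle\calE_0\rangle$. This reduction needs justification (e.g.\ via the fundamental exact sequence and the structure of $G(\calE_0,x)^{\cst}$) or should be avoided.
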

	
	\begin{proof}
		For lisse sheaves see \cite[Lemma 6 and Theorem 8]{Katz} as well as \cite{KatzCor}. In the $p$-adic case see the (proof of) \cite[Theorem~3.10]{AE}.
	\end{proof}
	
	Thanks to Theorem \ref{dom-curve-theorem} and the work of Deligne in \cite{DelFin}, the first three parts of the conjecture follow from the case of curves.
	\begin{theo}[L.~Lafforgue, Abe, Deligne, Abe--Esnault, Kedlaya]
		\label{Deligne-conjecture-general-theorem}
		Parts (i), (ii) and (iii) of Conjecture \ref{Deligne-conjectures} are true for every smooth variety over $\Fq$.
	\end{theo}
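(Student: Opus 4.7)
By Proposition \ref{iotamixed-p} it suffices to prove parts (i), (ii) and (iii) of Conjecture \ref{Deligne-conjectures}, as Conjecture \ref{everything-iota-mixed:c} then follows. The strategy indicated in the excerpt is to transport these statements from smooth curves, where Theorem \ref{Langlands-theorem} provides them, to arbitrary smooth varieties; this is carried out in two stages depending on the type of coefficient object.

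I would first treat the lisse sheaf case. Let $\calV_0$ be an absolutely irreducible $\Qlbar$-lisse sheaf on $X_0$ with finite-order determinant. For (i) and (iii), fix an arbitrary $x_0\in |X_0|$ and apply Theorem \ref{dom-curve-theorem} with $S_0=\{x_0\}$ to obtain a geometrically connected smooth curve $C_0$, a morphism $f_0:C_0\to X_0$, and a section $\{x_0\}\to C_0$ landing on a closed point $c_0\in |C_0|$ with $\kappa(c_0)=\kappa(x_0)$, such that the inverse image functor $\langle\calV_0\rangle\to\langle f_0^*\calV_0\rangle$ is an equivalence. Then $f_0^*\calV_0$ remains absolutely irreducible with finite-order determinant, so by Theorem \ref{Langlands-theorem} it is pure of weight $0$ and $p$-plain on $C_0$. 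The identity $P_{c_0}(f_0^*\calV_0,t)=P_{x_0}(\calV_0,t)$ transports these eigenvalue conditions back to $\calV_0$ at $x_0$, and letting $x_0$ vary yields (i) and (iii). For (ii), the existence of a number field $E$ over which $\calV_0$ is rational is Deligne's theorem \cite{DelFin}.

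For overconvergent $F$-isocrystals I would route the argument through companions. Given an absolutely irreducible overconvergent $F$-isocrystal $\calE_0$ on $X_0$ with finite-order determinant, the constructions of Abe--Esnault \cite{AE} and Kedlaya \cite{Ked} produce a compatible absolutely irreducible $\Qlbar$-lisse sheaf $\calV_0$ on $X_0$ for some $\ell\neq p$. Since (i), (ii) and (iii) are conditions purely on the Frobenius characteristic polynomial function, which is shared by $\calE_0$ and $\calV_0$, they transfer immediately from the lisse-sheaf case already handled to $\calE_0$. Once (i) is in hand for $F$-isocrystals, Theorem \ref{dom-curve-theorem} (whose $F$-isocrystal version requires $\iota$-purity) also becomes available on the $p$-adic side, furnishing an alternative direct reduction.

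The principal obstacle is the construction of a companion lisse sheaf for an $F$-isocrystal on a higher-dimensional variety: this is the nontrivial content of \cite{AE} and \cite{Ked}, resting on Abe's $p$-adic Langlands correspondence on curves (Theorem \ref{Langlands-theorem}), Drinfeld's gluing theorem for lisse sheaves, and a Lefschetz-type theorem for $\iota$-pure $F$-isocrystals. The subtlety is that Theorem \ref{dom-curve-theorem} on the $F$-isocrystal side requires $\iota$-purity as input, which is precisely what part (i) would provide, so one cannot simply mirror the lisse-sheaf reduction on the $p$-adic side without first building the companion by independent means.
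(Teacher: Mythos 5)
Your treatment of the lisse-sheaf case matches the paper exactly: (i) and (iii) via Theorem \ref{dom-curve-theorem} combined with Theorem \ref{Langlands-theorem}, and (ii) via Deligne \cite{DelFin}. You have also correctly diagnosed the subtlety that the $p$-adic version of Theorem \ref{dom-curve-theorem} has $\iota$-purity as a hypothesis, so the lisse-sheaf argument cannot be mirrored verbatim on the $F$-isocrystal side.

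However, your routing for overconvergent $F$-isocrystals has a circularity. You propose to invoke the companion constructions of \cite{AE} and \cite{Ked} to produce a compatible lisse sheaf $\calV_0$ and then read off (i), (ii), (iii) from $\calV_0$. But the companion existence result (stated in the paper as Theorem \ref{weak-companions-smooth-theorem}) has $E$-rationality of $\calE_0$ — i.e.\ part (ii) — as a \emph{hypothesis}. So you cannot conclude (ii) for $F$-isocrystals by transferring it across a companion whose very construction presupposes (ii). The paper avoids this by citing the intermediate results of \cite{AE} and \cite{Ked} \emph{prior to} the companion construction: $\iota$-mixedness for $F$-isocrystals is \cite[Theorem 2.7]{AE} and \cite[Theorem 3.1.9]{Ked}, which together with Proposition \ref{iotamixed-p} gives (i) directly; part (ii) is \cite[Lemma 4.1]{AE} and \cite[Theorem~3.4.2]{Ked}; and only part (iii) is obtained by the Lefschetz route, which by then is legitimate because $\iota$-purity is in hand. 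Your proposal should be reorganized accordingly — cite the $\iota$-mixedness and rationality statements as free-standing inputs rather than trying to extract all three conditions from a companion whose existence depends on two of them.
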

	\begin{proof} For lisse sheaves, parts (i) and (iii) follow directly from Theorem \ref{Langlands-theorem}, thanks to Theorem \ref{dom-curve-theorem}. Switching to overconvergent $F$-isocrystals, Conjecture \ref{Deligne-conjectures}.(i) is proven in \cite[Theorem 2.7]{AE} and independently in \cite[Theorem 3.1.9.(i)]{Ked}. Part (iii) then follows from Theorem \ref{Langlands-theorem} thanks to part (i) and Theorem \ref{dom-curve-theorem}. Part (ii) is proven in \cite[Theorem~3.1]{DelFin} for lisse sheaves and in \cite[Lemma 4.1]{AE} and \cite[Theorem~3.4.2]{Ked} for overconvergent $F$-isocrystals.
	\end{proof}
	
	The generalization of part (iv') to higher dimensional varieties is yet incomplete. For the moment we know how to construct from a coefficient object of both types compatible lisse sheaves. In dimension greater than 1, we do not know how to construct, in general, compatible overconvergent $F$-isocrystals.
	
	\begin{theo}[L.~Lafforgue, Abe, Drinfeld, Abe--Esnault, Kedlaya]
		\label{weak-companions-smooth-theorem}
		Let $X_0$ be a smooth variety over $\F_q$ and $E$ a number field. Let $\calE_0$ be an absolutely irreducible $E$-rational coefficient object with finite order determinant over $X_0$. For every prime $\ell$ different from $p$ and every embedding $\tau: E \hookrightarrow \Qlbar$, there exists a $\Qlbar$-coefficient object which is $E$-rational with respect to $\tau$ and $E$-compatible with $\calE_0$.
	\end{theo}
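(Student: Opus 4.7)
The plan is to reduce the problem to the case of smooth curves, where the existence of companions is Theorem \ref{Langlands-theorem}, and then to assemble the resulting curve-level companions into a single $\Qlbar$-lisse sheaf on $X_0$ via Drinfeld's gluing theorem \cite[Theorem 2.5]{Dri}. By Theorem \ref{Deligne-conjecture-general-theorem}, the coefficient object $\calE_0$ is automatically $\iota$-pure of weight $0$ for every $\iota:\Qlbar\iso\C$, so in particular the Lefschetz theorem \ref{dom-curve-theorem} is available both in the lisse-sheaf case and in the overconvergent $F$-isocrystal case.

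For each reduced finite closed subscheme $S_0\subseteq X_0$, I would invoke Theorem \ref{dom-curve-theorem} to produce a smooth geometrically connected curve $C_0$ and a morphism $f_0:C_0\to X_0$ with a section from $S_0$ such that the inverse image functor $\langle\calE_0\rangle\iso\langle f_0^*\calE_0\rangle$ is an equivalence. The pullback $f_0^*\calE_0$ therefore remains absolutely irreducible, $E$-rational, and of finite-order determinant. Applying Theorem \ref{Langlands-theorem} on $C_0$ yields an absolutely irreducible $\Qlbar$-lisse sheaf $\calV_{C_0}$ on $C_0$ which is $E$-rational with respect to $\tau$ and $E$-compatible with $f_0^*\calE_0$.

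The next step is to assemble the system $\{\calV_{C_0}\}$, as $C_0$ ranges over such curves, into a single $\Qlbar$-lisse sheaf $\calV$ on $X_0$. This is the content of Drinfeld's gluing theorem, which, following Wiesend's ideas, presents the category of lisse $\Qlbar$-sheaves on $X_0$ as a limit over smooth curves mapping to $X_0$ satisfying a compatibility condition at closed points. The verification of this condition reduces, via Proposition \ref{Cebotarev-proposition} applied to $\iota$-mixed objects (available thanks to Theorem \ref{Deligne-conjecture-general-theorem}), to the coincidence of Frobenius characteristic polynomials at closed points of $X_0$, which is built into the construction of each $\calV_{C_0}$ through its $E$-compatibility with $f_0^*\calE_0$.

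The main obstacle is the gluing itself, which is the technical heart of \cite{Dri}; in the overconvergent case one applies the same outline, relying on the $p$-adic half of Theorem \ref{Langlands-theorem} and on the overconvergent Lefschetz statement subsumed into Theorem \ref{dom-curve-theorem}, as carried out by Abe--Esnault \cite{AE} and Kedlaya \cite{Ked}. Once $\calV$ is produced, its $E$-rationality with respect to $\tau$ and its $E$-compatibility with $\calE_0$ are automatic: for any closed point $x_0\in|X_0|$, choosing a curve $C_0$ as above passing through $x_0$ identifies $P_{x_0}(\calV,t)$ with $P_{x_0}(\calV_{C_0},t)$, which equals $\tau(P_{x_0}(\calE_0,t))$ by construction.
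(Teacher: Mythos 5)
Your proposal reconstructs the argument behind the references the paper cites (the paper's own proof is essentially a pointer to Drinfeld \cite{Dri}, Abe--Esnault \cite{AE}, and Kedlaya \cite{Ked}), so the overall strategy is right. However, the way you feed the gluing theorem contains a genuine imprecision. Drinfeld's Theorem~2.5 does not assemble a family of lisse sheaves indexed by Lefschetz curves; its input is the candidate Frobenius characteristic polynomial function on $|X_0|$, together with the verification that for \emph{every} smooth curve $D_0\to X_0$ (not only the special ones produced by Theorem~\ref{dom-curve-theorem}) the restricted data are realized by a lisse sheaf on $D_0$. For an arbitrary such $D_0$, the pullback $f_{D_0}^*\calE_0$ is in general not absolutely irreducible, so Theorem~\ref{Langlands-theorem} cannot be applied directly: one must semi-simplify, decompose into absolutely irreducible summands, twist each summand so that its determinant has finite order (Corollary~\ref{rank-1-finite-order:c}), apply the curve Langlands correspondence to each, and reassemble. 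Your outline skips this step entirely, which is where much of the work in \cite{Dri} and \cite{AE} actually lies.

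The Lefschetz curves of Theorem~\ref{dom-curve-theorem} serve a different purpose than the one you assign them: after the gluing produces a lisse sheaf $\calV$ on $X_0$, restriction to a Lefschetz curve $C_0$ identifies $f_0^*\calV$ (up to semi-simplification, via Proposition~\ref{Cebotarev-proposition}) with the absolutely irreducible companion of $f_0^*\calE_0$ on $C_0$, which is what lets you conclude $\calV$ is absolutely irreducible on $X_0$. Relatedly, Proposition~\ref{Cebotarev-proposition} is not the right justification for the "compatibility condition at closed points" inside the gluing theorem itself, since that condition is a numerical matching of characteristic polynomials that holds by construction; Čebotarev is used afterwards, to compare the glued $\calV$ with the curve-level companions. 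So the skeleton is correct, but a full proof would need to correct the description of what the gluing input actually is and where the Lefschetz curve enters.
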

	
	\begin{proof}
		Drinfeld proved the result when $\calE_0$ is a lisse sheaf, \cite{Dri}. The proof uses L. Lafforgue's result and a certain gluing theorem for lisse sheaves [\textit{ibid.}, Theorem~2.5]. The gluing theorem is inspired by the seminal work of Wiesend in \cite{Wie}. When $\calE_0$ is an overconvergent $F$-isocrystal the result was proven in \cite{AE} and later in \cite{Ked}. They both use Drinfeld's gluing theorem for lisse sheaves. In \cite{AE}, the authors prove and use Theorem~\ref{dom-curve-theorem} for overconvergent $F$-isocrystals. Instead in \cite{Ked}, it is proven a weaker form of Theorem~\ref{dom-curve-theorem}, namely [\textit{ibid}., Lemma 3.2.1], which is enough to conclude. 
	\end{proof}
	
	The known parts of Deligne's conjecture have many important consequences. For example, by Corollary \ref{rank-1-finite-order:c}, we have the following result.
	\begin{coro}\label{ever-is-mixe:c}
		Every coefficient object over $X_0$ is $\iota$-mixed.
	\end{coro}

We have already seen in §\ref{Weights-ss} how, using the theory of weights, one can recover some useful information from the $L$-function of a coefficient object. We wanted to mention here an interesting application, even if we will not use it in the article.
	\begin{coro}
		\label{geom-ss-funct-corollary}
	A $\Qlbar$-coefficient object over $X_0$ is geometrically semi-simple if and only if it is a direct sum of $\iota$-pure $\Qlbar$-coefficient objects. In particular, for every morphism $f_0:Y_0\to X_0$ of smooth varieties, if $\calE_0$ is a geometrically semi-simple coefficient object over $X_0$, then $f_0^*\calE_0$ is a geometrically semi-simple coefficient object over $Y_0$.
	\end{coro}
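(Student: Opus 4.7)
The plan is to deduce both assertions directly from Corollary~\ref{geometric-semi-simplicity-corollary} and Theorem~\ref{Deligne-conjecture-general-theorem}; almost no new work is needed, because by this point in the paper the theory of weights, Deligne's conjecture (i), and the Čebotarev-style tools are all available.

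For the equivalence, I would handle the two directions separately. The "if" direction is immediate: by the first sentence of Corollary~\ref{geometric-semi-simplicity-corollary}, each $\iota$-pure coefficient object is geometrically semi-simple, and a finite direct sum of semi-simple objects in an abelian category is semi-simple. The "only if" direction uses Theorem~\ref{Deligne-conjecture-general-theorem}, which guarantees that every $\Qlbar$-coefficient object over $X_0$ is $\iota$-mixed. Given this, if $\calE_0$ is geometrically semi-simple, then the second sentence of Corollary~\ref{geometric-semi-simplicity-corollary} applies and produces the desired decomposition $\calE_0 \simeq \bigoplus_w \calE_{0,w}$ into $\iota$-pure summands.

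For the functoriality claim, I would write $\calE_0 = \bigoplus_i \calE_{i,0}$ with each $\calE_{i,0}$ $\iota$-pure of weight $w_i$, using the first part. Since $f_0^*$ commutes with finite direct sums, it suffices to verify that $f_0^*\calE_{i,0}$ is $\iota$-pure of the same weight $w_i$; the equivalence just proven then gives that $f_0^*\calE_0$ is geometrically semi-simple. The $\iota$-purity of the pullback is a direct computation: for a closed point $y\in |Y_0|$ with image $x := f_0(y)\in |X_0|$, we have $\deg(x)\mid\deg(y)$ and the Frobenius at $y$ on $f_0^*\calE_{i,0}$ is the $\deg(y)/\deg(x)$-th power of the Frobenius at $x$ on $\calE_{i,0}$, so the $\iota$-absolute values of its eigenvalues equal $q^{\deg(x)\cdot w_i/2\,\cdot\,\deg(y)/\deg(x)}=(\#\kappa(y))^{w_i/2}$, as required.

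The only nontrivial ingredient is Theorem~\ref{Deligne-conjecture-general-theorem}, which has already been invoked in the paper; everything else is formal. There is no real obstacle to surmount here, so the argument reduces to citing the two halves of Corollary~\ref{geometric-semi-simplicity-corollary} together with the compatibility of $\iota$-weights under pullback at closed points.
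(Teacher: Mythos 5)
Your argument is correct and matches the paper's proof in structure: both directions of the equivalence come from the two halves of Corollary~\ref{geometric-semi-simplicity-corollary} once Theorem~\ref{Deligne-conjecture-general-theorem} guarantees $\iota$-mixedness, and the functoriality claim reduces to noting that $f_0^*$ preserves the property of being a direct sum of $\iota$-pure objects. The only difference is cosmetic — you spell out the Frobenius-eigenvalue computation where the paper simply calls the preservation of $\iota$-purity under pullback "manifest."
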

	
	\begin{proof}Combining Corollary~\ref{filt-geo-semi:c}.(ii) and Corollary~\ref{ever-is-mixe:c} we get the first part of the statement. For the second part note that the property of a coefficient object of being a direct sum of $\iota$-pure coefficient objects is manifestly preserved by the inverse image functor $f_0^*$.
	\end{proof}

\begin{rema}
Note that in \cite{Moc}, Mochizuki proves an analogue characterization for semi-simple local systems over smooth quasi-projective varieties over $\C$. Even in that case, one gets as an outcome that the inverse image of a semi-simple local system is semi-simple, [\textit{ibid.}, Theorem 7.1].
\end{rema}

	
	\subsection{Compatible systems}
	\label{compatible-systems-ss}
	
	We have seen in Theorem~\ref{weak-companions-smooth-theorem}, that from a coefficient object which satisfies certain properties, we can construct many compatible coefficient objects with different fields of scalars. Deligne, in \cite[Conjecture 1.2.10]{Weil2}, predicted that these fields should be the completions at different finite places of a given number field. It is possible to upgrade Theorem~\ref{weak-companions-smooth-theorem} to this stronger form thanks to the work of Chin in \cite{Chin1}. To state the result, we use Serre's notion of \textit{compatible systems}, which we extend to arbitrary coefficient objects as in \cite{Pal}. Since it is not known at the moment whether the $p$-adic companions always exist, we do not ask that compatible systems have to include $p$-adic coefficient objects for every $p$-adic place. On the other hand, we do ask that they include lisse sheaves for every finite place which do not divide $p$.
	
	\begin{defi}[Compatible systems]
		Let $E$ be a number field. An \textit{$E$-compatible system} over $X_0$, denoted by $\underline{\calE_0}$, is the datum of:
		
		\begin{itemize}
\item[--] a set $\Sigma$ of finite places of $E$ which contains $|E|_{\neq p}$,
\item[--] a family $\{\calE_{\lambda, 0}\}_{\lambda\in \Sigma}$ of pairwise $E$-compatible $E_{\lambda}$-coefficient objects, one for each $\lambda\in \Sigma$.
		\end{itemize}
For every $\lambda\in \Sigma$, we say that $\calE_{\lambda, 0}$ is the \textit{$\lambda$-component} of the compatible system. If $E\subseteq E'$ is a finite extension and $\underline{\calE_0}$ is an $E$-compatible system, the compatible system obtained from $\underline{\calE_0}$ by \textit{extending the scalars to} $E'$ is the $E'$-compatible system $\{\calE'_{\lambda',0}\}_{\lambda'\in \Sigma'}$, where $\Sigma'$ is the set of all the places of $E'$ over the places in $\Sigma$ and $\calE'_{\lambda',0}:=\calE_{\lambda, 0}\otimes_{\El} E'_{\lambda'}$ where $\lambda$ denotes the place of $\Sigma$ under $\lambda'$. We say that a compatible system is \textit{trivial, geometrically trivial, pure, irreducible, absolutely irreducible or semi-simple} if each $\lambda$-component has the respective property. 
	\end{defi}
		
	\begin{theo}[after Chin]
		\label{companions-smooth-theorem}
		Let $X_0$ be a smooth variety over $\Fq$ and let $\calE_0$ be an algebraic $\Qlbar$-coefficient object over $X_0$. There exists a number field $E$, a finite place $\nu\in |E|$ and an $E$-compatible system $\underline{\calE_0}$ such that $\calE_0$ is a $\nu$-component of $\underline{\calE_0}$. When $X_0$ is a curve, we can further find such an $E$-compatible system $\underline{\calE_0}$ with $\Sigma=|E|$.
	\end{theo}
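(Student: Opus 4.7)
The plan is to combine Theorem~\ref{weak-companions-smooth-theorem}, which produces compatible $\Qlpbar$-companions one place at a time, with the descent result of Chin~\cite{Chin1}, which converts a family of $E$-rational $\Qlpbar$-companions (for every $\ell'\neq p$ and every embedding of $E$) into a genuine compatible system of $E_\lambda$-coefficient objects after a finite extension of $E$.

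First I would use Corollary~\ref{algebraic-are-rational-corollary} to obtain a number field $E\subseteq\Qlbar$ such that $\calE_0$ is $E$-rational. After enlarging $E$ by a finite extension inside $\Qlbar$, I may assume that the field of scalars of $\calE_0$ is exactly $E_\nu$ for the place $\nu$ of $E$ determined by the fixed embedding $E\hookrightarrow\Qlbar$; then $\calE_0$ is itself an $E$-rational $E_\nu$-coefficient object and will serve as the $\nu$-component. Next, for each $\lambda\in |E|_{\neq p}$ of residue characteristic $\ell'$ and each embedding $E\hookrightarrow\Qlpbar$ extending the $\lambda$-adic one, I would produce an $E$-rational $\Qlpbar$-companion of $\calE_0$ by decomposing $\calE_0^{\sms}$ via Corollary~\ref{rank-1-finite-order:c} as $\bigoplus_i \calF_{i,0}^{(a_i)}$ with each $\calF_{i,0}$ absolutely irreducible of finite-order determinant, applying Theorem~\ref{weak-companions-smooth-theorem} to each $\calF_{i,0}$, and reassembling with the same twists. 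Performing this construction uniformly on every finite étale cover of $X_0$ (using that Theorem~\ref{weak-companions-smooth-theorem} and Theorem~\ref{Deligne-conjecture-general-theorem}.(ii) apply on any smooth variety) produces precisely the input required by Chin's theorem.

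Chin's theorem then supplies a finite extension $E'/E$ inside $\Qlbar$ such that for every $\lambda'\in|E'|_{\neq p}$ there exists an $E'_{\lambda'}$-coefficient object $E'$-compatible with $\calE_0$. Replacing $E$ by $E'$, $\nu$ by the unique place $\nu'$ of $E'$ above $\nu$ induced by the embedding, and $\calE_0$ by $\calE_0\otimes_{E_\nu}E'_{\nu'}$ (which has the same Frobenius characteristic polynomial function and is still algebraic), one obtains the desired $E'$-compatible system indexed by $\Sigma=|E'|_{\neq p}$. For the curves' case, Theorem~\ref{Langlands-theorem} additionally supplies $\Qpbar$-companions at $p$-adic places, so the same Chin-style argument extends the index set $\Sigma$ to all of $|E'|$. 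The main obstacle is the descent step of Chin: Theorem~\ref{weak-companions-smooth-theorem} only delivers companions over the transcendental field $\Qlpbar$, and cutting them down simultaneously, for every $\ell'$ and every $\lambda$, to a single number field $E$ independent of $\lambda$ is exactly the nontrivial arithmetic content of~\cite{Chin1}; the rest of the proof is essentially an organized application of results already recalled in the paper.
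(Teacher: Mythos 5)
Your proof follows essentially the same route as the paper's: reduce via Corollary~\ref{algebraic-are-rational-corollary} and Corollary~\ref{rank-1-finite-order:c} to absolutely irreducible pieces with finite-order determinant (up to twists in $E^\times$), produce $\Qlpbar$-companions at every $\ell'\neq p$ via Theorem~\ref{weak-companions-smooth-theorem}, invoke Chin~\cite{Chin1} to descend all these companions simultaneously to $E_\lambda$-coefficient objects after a finite extension of $E$, and finally add the finitely many $p$-adic places in the curve case using Theorem~\ref{Langlands-theorem}. The only cosmetic difference is that you keep the full isotypic decomposition $\bigoplus_i \calF_{i,0}^{(a_i)}$ and handle each summand, whereas the paper first reduces to a single absolutely irreducible factor and then observes that direct sums of compatible systems are compatible; both are equivalent, and you correctly identify the extra input Chin's theorem needs, namely that (ii) and (iv') must be available on every finite \'etale cover of $X_0$, which Theorems~\ref{Deligne-conjecture-general-theorem} and~\ref{weak-companions-smooth-theorem} supply uniformly for smooth varieties.
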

	
	\begin{proof}
		It is enough to prove the statement when $\calE_0$ is irreducible. Thanks to Corollary \ref{rank-1-finite-order:c}, $\calE_0$ is isomorphic to $\calF_0^{(a)}$ with $a$ an algebraic number and $\calF_0$ irreducible with finite order determinant. By Theorem \ref{Deligne-conjecture-general-theorem}, the coefficient object $\calF_0$ is $E$-rational for some number field $E$. After extending $E$, we may assume that $a\in E$. If $\ell\neq p$, thanks to Theorem~\ref{weak-companions-smooth-theorem} and \cite[Main Theorem, page 3]{Chin1}, after possibly enlarging $E$ again, the lisse sheaf $\calF_0$ sits in an $E$-compatible system. By twisting all the components by $a$, the same holds true for $\calE_0$. If $\ell=p$, thanks to Theorem~\ref{weak-companions-smooth-theorem}, the coefficient object $\calE_0$ admits an $E$-compatible lisse sheaf $\calV_0$. The result then follows from the previous case.
		
		When $X_0$ is a curve, we obtain the stronger result thanks to the existence of $p$-adic companions provided by Theorem \ref{Langlands-theorem}. After possibly replacing $E$ with a finite extension, we may add to the compatible system previously constructed $\lambda$-components for every place $\lambda$ which divides $p$. Here we do not need a new finiteness result for overconvergent $F$-isocrystals, namely a $p$-adic analogue of Chin's theorem, because the set of places we are adding is finite. 
		
	\end{proof}
	\begin{rema}
		Even if a coefficient object $\calE_0$ is $E$-rational for some number field $E$, it could be still necessary to enlarge $E$ to obtain the $E$-compatible system $\underline{\calE_0}$. For example, let $Q_8$ be the quaternion group and let $X_0$ be a smooth connected variety which admits a Galois cover with Galois group $Q_8$. Let $\mathbb{H}$ be the natural four-dimensional $\Q$-linear representation of $Q_8$ on the algebra of Hamilton's quaternions. 
		
		The representation $\mathbb{H}\otimes_\Q\Ql$ is irreducible over $\Ql$ if and only if $\ell=2$. If we take $\ell\neq 2$, then $\mathbb{H}\otimes_\Q\Ql$ decomposes as a direct sum of two copies of an absolutely irreducible two dimensional $\Ql$-representation $V_\ell$ with traces in $\Q$. The representation $V_\ell$ corresponds to an absolutely irreducible $\Q$-rational $\Ql$-coefficient object which does not admit any $\Q$-compatible $\Q_2$-coefficient object. Indeed, suppose that there exists a semi-simple $\Q_2$-coefficient object $V_2$, that is $\Q$-compatible with $V_\ell$. Then $V_2^{\oplus 2}$ would be $\Q$-compatible with $\mathbb{H}\otimes_\Q\Q_2$. By Proposition~\ref{Chebotarev-proposition}, the coefficient object $V_2^{\oplus 2}$ would be isomorphic to $\mathbb{H}\otimes_\Q\Q_2$. However, this is impossible because $\mathbb{H}\otimes_\Q\Q_2$ is irreducible.
	\end{rema}

	\section{Independence of monodromy groups}
	\label{independence-of-monodromy-section}
	\begin{nota}\label{X:n}Throughout §\ref{independence-of-monodromy-section}, let $(X_0,x)$ be a smooth geometrically connected pointed variety over $\Fq$.
	\end{nota}
	
	\subsection{The group of connected components}
	\label{the-group-of-ss}

	In \cite{Serre} and \cite{LP5} Serre and Larsen--Pink proved some results of $\ell$-independence for the groups of connected components of the monodromy groups of lisse sheaves. In this section, we shall extend their results to algebraic coefficient objects. We will adapt Larsen--Pink's proof. The main issue for $p$-adic coefficient objects is to relate the monodromy groups with the 
	\'etale fundamental group of $X_0$. We have already treated this problem in §\ref{comparison-etale:ss}. By Proposition~\ref{connected-components-etale-proposition}, for every coefficient object $\calE_0$ we have functorial surjective morphisms $\psi_{\calE_0}:\pi^{\et}_1(X_0,x)\to \pi_0(G(\calE_0,x))$ and $\psi_{\calE}:\pi^{\et}_1(X,x)\to \pi_0(G(\calE,x))$ of profinite groups.

	\begin{theo}\label{Larsen-and-Pink-theorem}
	Let $\calE_0$ and $\calF_0$ be two compatible $\El$-coefficient objects over $X_0$.
		\begin{enumerate}
			
			\item[{(i)}]There exists an isomorphism $\varphi_0:\pi_0(G(\calE_0,x))\iso \pi_0(G(\calF_0,x))$ as abstract finite groups such that $\psi_{\calF_{0}} = \varphi_0\circ \psi_{\calE_{0}}$.
			
			\item[{(ii)}] The isomorphism $\varphi_0$ restricts to an isomorphism $\varphi:\pi_0(G(\calE,x))\iso \pi_0(G(\calF,x))$.
		\end{enumerate}
	\end{theo}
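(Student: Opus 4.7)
The plan is to adapt the Larsen--Pink argument (\cite[Proposition~2.2]{LP5}) using the two key tools now available for both kinds of coefficient objects: the identification of $\pi_0$ of the Tannakian fundamental group with a quotient of $\pi_1^{\et}$ (Proposition~\ref{connected-components-etale-proposition}), and Chebotarev density (Proposition~\ref{Cebotarev-proposition}). Throughout, the strategy is to show that the open normal subgroup $N_{\calE_0} := \ker(\psi_{\calE_0})\subseteq\pi_1^{\et}(X_0,x)$ depends only on the characteristic polynomial function of $\calE_0$, so that compatibility forces $N_{\calE_0}=N_{\calF_0}$.

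First I would reduce to the case where $\calE_0$ and $\calF_0$ are semi-simple. Indeed, $\pi_0(G(\calE_0,x))$ is the Tannakian group of the full subcategory of finite objects in $\langle\calE_0\rangle$; in characteristic $0$ every finite object is semi-simple, and the finite objects of $\langle\calE_0\rangle$ and $\langle\calE_0^{\sms}\rangle$ coincide since these two categories share the same simple subquotients. The same observation applies to $\pi_0(G(\calE,x))$.

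Next I would characterise $N_{\calE_0}$ intrinsically. By Proposition~\ref{connected-components-etale-proposition}, an element $g\in\pi_1^{\et}(X_0,x)$ belongs to $N_{\calE_0}$ iff $\sigma(g)=1$ for every irreducible finite-image $\Qlbar$-representation $\sigma$ of $\pi_1^{\et}(X_0,x)$ appearing as a direct summand of some tensor construction $T(\calE_0):=\bigoplus \calE_0^{\otimes a_i}\otimes(\calE_0^{\vee})^{\otimes b_i}$. Any such $\sigma$ is defined over $\Qbar$ because its image is finite, so it makes sense simultaneously over $\Qlbar$ and over $\Qlpbar$ via embeddings of $\Qbar$. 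For a fixed $\sigma$ defined over $\Qbar$, the multiplicity of $\sigma\otimes\Qlbar$ in $T(\calE_0)$ equals the inner product $\langle\chi_\sigma,\chi_{T(\calE_0)}\rangle$ computed on the finite quotient of $\pi_1^{\et}(X_0,x)$ through which $\sigma$ factors, and this depends only on the trace function of $T(\calE_0)$. By Chebotarev (Proposition~\ref{Cebotarev-proposition}), this trace function is determined, after semi-simplification, by the values at Frobenii at closed points, i.e.\ by the characteristic polynomial function of $\calE_0$.

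Combining these points with the compatibility of $\calE_0$ and $\calF_0$, so that $T(\calE_0)$ and $T(\calF_0)$ share the same Frobenius characteristic polynomials, one obtains that $\sigma\otimes\Qlbar$ is a summand of $T(\calE_0)$ iff $\sigma\otimes\Qlpbar$ is a summand of $T(\calF_0)$. Hence the collection of isomorphism classes of finite irreducible $\Qbar$-representations of $\pi_1^{\et}(X_0,x)$ arising from $\langle\calE_0\rangle$ coincides with the analogous collection for $\langle\calF_0\rangle$, and therefore $N_{\calE_0}=N_{\calF_0}$. The induced isomorphism $\varphi_0\colon \pi_0(G(\calE_0,x))\iso \pi_0(G(\calF_0,x))$ is automatically compatible with the surjections from $\pi_1^{\et}(X_0,x)$, giving (i). For (ii), the commutative diagram of Proposition~\ref{connected-components-etale-proposition} identifies $\pi_0(G(\calE,x))$ with the image of $\pi_1^{\et}(X,x)$ inside $\pi_0(G(\calE_0,x))$; since $\varphi_0$ is compatible with the quotient maps from $\pi_1^{\et}(X_0,x)$, it restricts to an isomorphism $\varphi$ between these images.

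The main obstacle is the bookkeeping needed to make the Chebotarev/multiplicity comparison rigorous across the different fields of scalars $E_\lambda$ and $E_{\lambda'}$: one must carefully choose embeddings $\Qbar\hookrightarrow\Qlbar$ and $\Qbar\hookrightarrow\Qlpbar$ and verify that "being a summand of a tensor construction" is detected by the trace function independently of the chosen embedding. Once this is in place, the argument runs in parallel to the lisse-sheaf case, with no ingredient specific to étale sheaves — the $p$-adic case being covered by precisely the same statements from Propositions~\ref{connected-components-etale-proposition} and~\ref{Cebotarev-proposition}.
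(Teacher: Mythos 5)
Your approach is a genuinely different route from the paper's. The paper follows Larsen--Pink closely: it reduces to a statement about connectedness, proves that connectedness statement via the dimension-data criterion of Lemma~\ref{connectness-seen-by-dimensions-lemma} combined with Corollary~\ref{independence-triviality:c}, and concludes by pulling back to the cover that kills $\pi_0(G(\calE_0,x))$. You instead characterize $\ker(\psi_{\calE_0})$ intrinsically through the set of finite-image irreducible representations appearing in $\langle\calE_0\rangle$. This is a legitimate alternative plan, and once patched it works; but two steps in your argument are not correct as stated.

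First, your claim that the multiplicity of $\sigma$ in $T(\calE_0)$ equals the inner product $\langle\chi_\sigma,\chi_{T(\calE_0)}\rangle$ computed on the finite quotient $H$ through which $\sigma$ factors is wrong: $T(\calE_0)$ does not factor through $H$, so $\chi_{T(\calE_0)}$ is not a class function on $H$ and the pairing is undefined. What one should say is that the multiplicity equals $\dim\Hom(\sigma,T(\calE_0))=\dim H^0\bigl(X_0,\sigma^\vee\otimes T(\calE_0)\bigr)$. Second, to compare this dimension across places $\lambda$ and $\lambda'$, Chebotarev (Proposition~\ref{Cebotarev-proposition}) is not the right tool, since it only compares two objects with the same field of scalars $\Qlbar$. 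The cross-$\lambda$ comparison must go through the $L$-function argument: $\sigma^\vee\otimes T(\calE_0)$ and $(\sigma')^\vee\otimes T(\calF_0)$ are compatible semi-simple coefficient objects (they are semi-simple because $\calE_0$, $\calF_0$, $\sigma$ are, and $\iota$-mixed by Theorem~\ref{Deligne-conjecture-general-theorem}), so Corollary~\ref{independence-triviality:c} yields the equality of $\dim H^0$. With these substitutions, the set of finite irreducibles in $\langle\calE_0\rangle$ is indeed $\lambda$-independent and $\ker\psi_{\calE_0}=\ker\psi_{\calF_0}$. Finally, your deduction of (ii) from (i) presupposes that $\pi_0(G(\calE,x))\to\pi_0(G(\calE_0,x))$ is injective, i.e.\ that $G(\calE,x)\cap G(\calE_0,x)^{\circ}=G(\calE,x)^{\circ}$; this is not automatic for a normal subgroup and needs the injectivity in the top row of the diagram (\ref{pi0-diagram}) of Proposition~\ref{connected-components-etale-proposition}, which you should cite explicitly. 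The paper sidesteps this by simply running the same argument in parallel for the geometric monodromy groups.
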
Following \cite[Proposition~2.2]{LP5}, we need two lemmas to prove Theorem \ref{Larsen-and-Pink-theorem}.
	\begin{cons}
		Let $\calE_0$ be a $\El$-coefficient object of rank $r$. We fix a basis of $\omega_{x,\El}(\calE_0)$ and we write $\rho_{\calE_0}: G(\calE_0,x)\to \GL_{r,\El}$ for the representation associated to $\calE_0$. For every finite-dimensional $\Q$-linear representation $\theta:\GL_{r,\Q}\to \GL(V)$ we denote by $\calE_0(\theta)$ the coefficient object associated to $(\theta\otimes_\Q \El)\circ \rho_{\calE_0}$. Even if $\calE_0(\theta)$ depends on the choice of a basis, its isomorphism class is uniquely determined.
	\end{cons}
	
	\begin{lemm}
		\label{dimension-data:l}
		Let $\calE_0$ and $\calF_0$ be compatible semi-simple objects. For every representation $\theta$ of $\GL_{r,\Q}$, we have $$\dim(H^0(X,\calE(\theta)))=\dim(H^0(X,\calF(\theta)))\textrm{ and } \dim(H^0(X_0,\calE_0(\theta)))=\dim(H^0(X_0,\calF_0(\theta))).$$
	\end{lemm}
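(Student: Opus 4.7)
The plan is to reduce the statement to Corollary~\ref{independence-triviality:c} applied to the pair $(\calE_0(\theta),\calF_0(\theta))$. Two things need verification: that these coefficient objects are compatible, and that they are both semi-simple and geometrically semi-simple, so that both parts of Corollary~\ref{independence-triviality:c} apply.

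For compatibility, the key observation is that, since $\theta$ is defined over $\Q$, the characteristic polynomial of $\theta(g)$ for $g\in \GL_r$ depends only on the characteristic polynomial of $g$. Indeed, by the Jordan decomposition $g=g_s g_u$, the matrices $\theta(g)$ and $\theta(g_s)$ share the same characteristic polynomial, and the eigenvalues of $\theta(g_s)$ are the weights of $\theta$ evaluated on the eigenvalues of $g_s$; their elementary symmetric functions are therefore $\Q$-polynomial in the coefficients of $\det(1-tg)$. Applied at every closed point of $X_0$, this shows that $\calE_0(\theta)$ and $\calF_0(\theta)$ have identical Frobenius characteristic polynomial functions; they are in particular $E$-compatible whenever $\calE_0$ and $\calF_0$ are.

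For semi-simplicity, the hypothesis that $\calE_0$ is semi-simple is equivalent (in characteristic zero) to $G(\calE_0,x)^\circ$ being reductive; its image under $\theta$ is then again reductive, so $\calE_0(\theta)$ is semi-simple. I would then note that every semi-simple coefficient object is automatically geometrically semi-simple: by Corollary~\ref{rank-1-finite-order:c} it decomposes as a sum of algebraic twists $\calF_{i,0}^{(a_i)}$ of absolutely irreducible coefficient objects with finite-order determinant; each $\calF_{i,0}$ is pure of weight zero by Theorem~\ref{Deligne-conjecture-general-theorem}, so each summand is $\iota$-pure, and Corollary~\ref{geom-ss-funct-corollary} yields geometric semi-simplicity. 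Consequently $G(\calE,x)^\circ$ is reductive as well, and the same reductivity argument gives the semi-simplicity of $\calE(\theta)$, that is, the geometric semi-simplicity of $\calE_0(\theta)$. The analogous statements hold for $\calF_0(\theta)$.

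Finally, Theorem~\ref{Deligne-conjecture-general-theorem} ensures that $\calE_0(\theta)$ is $\iota$-mixed, so Corollary~\ref{independence-triviality:c} applies and delivers both equalities of dimensions. I expect the only delicate step to be the verification that $\det(1-t\,\theta(g))$ depends only on $\det(1-tg)$, which is what makes compatibility survive the functor $\theta$; the rest is a formal use of the Tannakian formalism together with the known cases of Deligne's conjectures.
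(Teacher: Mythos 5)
Your proposal is correct and follows the same route as the paper: form $(\calE_0(\theta),\calF_0(\theta))$, observe this pair is again compatible and semi-simple, invoke Theorem~\ref{Deligne-conjecture-general-theorem} to guarantee the $\iota$-mixed hypothesis, and then apply Corollary~\ref{independence-triviality:c}. The only difference is that you flesh out the verifications—the $\Q$-rationality argument for why compatibility survives $\theta$, Chevalley's criterion for why semi-simplicity survives, and the decomposition via Corollary~\ref{rank-1-finite-order:c} and Corollary~\ref{geom-ss-funct-corollary} to upgrade semi-simple to geometrically semi-simple—which the paper leaves implicit in its one-line proof.
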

	\begin{proof}
		
	The coefficient objects $\calE_0(\theta)$ and $\calF_0(\theta)$ are again compatible and semi-simple. Moreover, by Theorem \ref{Deligne-conjecture-general-theorem}, for every representation $\theta$ the coefficient object $\calE_0(\theta)$ is $\iota$-mixed. Therefore, for every $\theta$, we may apply Corollary~\ref{independence-triviality:c} to $\calE_0(\theta)$ and $\calF_0(\theta)$.
	\end{proof}
	\begin{rema}
		Using the terminology of \cite{LP1}, Lemma \ref{dimension-data:l} proves that $G(\calE,x)$ and $G(\calF,x)$ (resp. $G(\calE_0,x)$ and $G(\calF_0,x)$) have the same \textit{dimension data}.
	\end{rema}
	\begin{lemm}[{\normalfont\cite[Lemma 2.3]{LP5}}]
		\label{connectness-seen-by-dimensions-lemma}Let $\KK$ be a field and $G$ a reductive algebraic subgroup of $\GL_{r,\KK}$. If for every finite-dimensional representation $V$ of $\GL_{r,\KK}$ the dimension of $V^{G^{\circ}}$ is equal to the dimension of $V^{G}$, then the group $G$ is connected.
	\end{lemm}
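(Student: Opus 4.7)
The plan is to argue by contradiction, showing that a non-trivial component group $H := G/G^{\circ}$ forces the dimension equality to fail for some representation $V$ of $\GL_{r,\KK}$. First I would observe that, for every finite-dimensional $V$, the subspace $V^{G^{\circ}} \subseteq V$ is stable under $G$ (since $G^{\circ}$ is normal in $G$), and $G^{\circ}$ acts trivially on it, so $V^{G^{\circ}}$ is naturally a representation of the finite group $H$, with $V^{G} = (V^{G^{\circ}})^{H}$. In the characteristic zero setting in which this lemma is applied (via Lemma~\ref{dimension-data:l}), Maschke's theorem lets me decompose $V^{G^{\circ}}$ into $H$-isotypic components, yielding
\[ \dim V^{G^{\circ}} - \dim V^{G} \;=\; \sum_{\chi \neq \mathbf{1}} (\dim \chi) \cdot [V^{G^{\circ}} : \chi], \]
summed over non-trivial irreducible $\KK$-representations $\chi$ of $H$. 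By hypothesis the left-hand side vanishes for every $V$, so no non-trivial irreducible representation of $H$ may occur as a subrepresentation of $V^{G^\circ}$ for any $\GL_r$-representation $V$.

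Next, assume toward contradiction that $H \neq 1$, and choose a non-trivial irreducible representation $W$ of $H$ (passing to an algebraic closure of $\KK$ if needed, which does not affect the question of connectedness of $G$). Inflating $W$ along $G \twoheadrightarrow H$ produces a representation $\widetilde{W}$ of $G$ on which $G^{\circ}$ acts trivially. Because $G \hookrightarrow \GL_{r,\KK}$ is a closed immersion, the standard representation is a tensor generator of $\Rep_{\KK}(G)$, and by \cite[Proposition~2.21]{DM} every object of $\Rep_{\KK}(G)$ is a subquotient of $V|_{G}$ for some representation $V$ of $\GL_{r,\KK}$. In characteristic zero the reductivity of $G$ makes $\Rep_{\KK}(G)$ semisimple, so ``subquotient'' can be strengthened to ``subrepresentation''. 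Applying this to $\widetilde{W}$ yields a representation $V$ of $\GL_{r,\KK}$ with $\widetilde{W} \hookrightarrow V|_{G}$; since $G^{\circ}$ acts trivially on $\widetilde{W}$ we have $\widetilde{W} \subseteq V^{G^{\circ}}$, and $V^{G^{\circ}}$ therefore contains the non-trivial irreducible $H$-representation $W$, contradicting the vanishing established in the first step. Hence $H$ is trivial and $G = G^{\circ}$ is connected.

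The only non-elementary input, and thus the main obstacle, is the Tannakian fact that every $G$-representation is a subquotient (and, in characteristic zero and for $G$ reductive, a subrepresentation) of the restriction of some $\GL_{r,\KK}$-representation; this follows from $G \hookrightarrow \GL_{r,\KK}$ being a closed immersion together with the semisimplicity of $\Rep_{\KK}(G)$. The remainder is Frobenius reciprocity and character theory for the finite group $H = G/G^{\circ}$.
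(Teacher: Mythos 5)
The paper does not reprove this lemma; it cites it directly to Larsen--Pink \cite[Lemma 2.3]{LP5}, so there is no proof in the source to compare against line-by-line. Your argument is, however, a correct reconstruction of the standard Larsen--Pink proof, and the key moves are exactly the expected ones: (1) $V^{G^{\circ}}$ is $G$-stable because $G^{\circ}\lhd G$, giving an action of the finite group $H=G/G^{\circ}$ with $V^{G}=(V^{G^{\circ}})^{H}$; (2) in characteristic zero Maschke decomposes $V^{G^{\circ}}$ into $H$-isotypic pieces, so the hypothesis forces every nontrivial $H$-isotypic component of $V^{G^{\circ}}$ to vanish; (3) since $G\hookrightarrow\GL_{r,\KK}$ is a closed immersion, \cite[Proposition~2.21]{DM} gives that every $G$-representation is a subquotient of some $V|_{G}$, and reductivity of $G$ in characteristic zero upgrades subquotient to subrepresentation; (4) inflating a nontrivial irreducible of $H$ to $G$ and embedding it into $V|_{G}$ lands it inside $V^{G^{\circ}}$, contradicting (2). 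Two cosmetic remarks: passing to $\overline{\KK}$ is unnecessary, since a nontrivial finite group already has a nontrivial irreducible $\KK$-representation in characteristic $0$ (take any nontrivial summand of $\KK[H]$ after splitting off the trivial), and the statement should be read with the standing characteristic-zero hypothesis under which the paper applies it, since both Maschke and the semisimplicity of $\Rep_\KK(G)$ are used. Also, no Frobenius reciprocity is actually invoked anywhere in the argument, contrary to what your closing sentence suggests; only Maschke's theorem and the Tannakian characterization of closed immersions are needed.
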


	\subsubsection{Proof of Theorem~\ref{Larsen-and-Pink-theorem}}
	We explain the proof for the arithmetic monodromy groups. For the geometric ones the proof is the same \textit{mutatis mutandis}.
	
	We notice that taking semi-simplification we do not change the group of connected components of the arithmetic monodromy group. Thus we reduce to the case when $\calF_0$ and $\calG_0$ are semi-simple. We firstly prove a weaker statement.
	\begin{itemize}
		\item[(i')] $G(\calE_0,x)$ is connected if and only if $G(\calF_0,x)$ is connected.
	\end{itemize}
	For every finite étale connected cover $f_{0}: Y_{0} \to X_{0}$, we denote by $a_{Y_{0}}$ and $b_{Y_{0}}$ the functions from the set of isomorphism classes of representations of $\GL_{r,\Q}$ to the natural numbers, defined by $$a_{Y_{0}}(\theta):=\dim(H^0(Y_{0},(f^*_{0}\calE_{0})(\theta))) \textrm{ and }b_{Y_{0}}(\theta):=\dim(H^0(Y_{0},(f^*_{0}\calF_{0})(\theta))).$$  By Lemma~\ref{dimension-data:l}, for every finite étale connected cover $Y_{0}\to X_0$, we have $a_{Y_{0}}=b_{Y_{0}}$. Suppose that $G(\calE_0,x)$ connected. By Proposition~\ref{connected-after-cover-proposition}, for every étale connected cover $f_{0}: (Y_{0},y) \to (X_{0},x)$, the groups $G(f_0^*\calE_0,y)$ and $G(\calE_0,x)$ are isomorphic via the natural morphisms, thus the functions $a_{Y_0}(\theta)$ and $a_{X_0}(\theta)$ are equal. Thanks to Proposition~\ref{connected-components-etale-proposition}, we also know that there exists an étale Galois cover $f_{0}: (Y_0,y) \to (X_{0},x)$ such that $G(f_0^*\calF_0,y)$ is isomorphic to $G(\calF_0,x)^{\circ}$. The functions $b_{Y_0}(\theta)$ and $b_{X_0}(\theta)$ are equal because of the comparison with $a_{Y_0}(\theta)$ and $a_{X_0}(\theta)$. Therefore, by Lemma \ref{connectness-seen-by-dimensions-lemma}, the group $G(\calF_0,x)$ is connected. This concludes the proof of (i').
	
	To prove (i) we show that $\Ker(\psi_{\calE_0})$ and $\Ker(\psi_{\calF_0})$ are the same subgroups of $\pi^{\et}_1(X_0,x)$. By symmetry it is enough to show that $\Ker(\psi_{\calE_0})\subseteq \Ker(\psi_{\calF_0})$. This is equivalent to proving that if $f_0:(Y_{0},y) \to (X_{0},x)$ is the Galois cover associated to $\Ker(\psi_{\calE_0})$, then the natural map $\Ker(\psi_{\calE_0})\to \pi^{\et}_1(X_0,x)/ \Ker(\psi_{\calF_0})$ is the trivial map. In other words, it is enough to show that $G(f_0^*\calF_0,y)$ is connected. As  $G(f_0^*\calE_0,y)$ is connected by construction, this is a consequence of (i').
	
	\qed

	\subsection{Frobenius tori}
	\label{Frobenius-tori:ss}

	We extend here the theory of \textit{Frobenius tori}, previously studied by Serre and Chin in \cite{Serre} and \cite[§5.1]{Chin2}, to algebraic coefficient objects over varieties of arbitrary dimension (Theorem \ref{max-tori:t}). The result for overconvergent $F$-isocrystals is completely new and it is obtained exploiting the existence of étale companions. Once we pass to étale lisse sheaves, we can use Chebotarev's density theorem, which is one of the key ingredients of the proof.

	\begin{cons}[Frobenius tori]\label{Frobenius tori}
		Let $\calE_0$ be an $\El$-coefficient object over $X_0$. For every closed point $i_0:x_0'\hookrightarrow X_0$ we have a functor $\langle \calE_0 \rangle \to \langle i_0^*\calE_0\rangle$ of inverse image. If $x'$ is an $\F$-point over $x_0'$, this functor induces a closed immersion $G(i_0^*\calE_0, x')\hookrightarrow G(\calE_0,x')$. Let $F_{x_0'}$ be the $\Elxp$-point of $G(i_0^*\calE_0, x')$ corresponding to the Frobenius automorphism and let $F_{x_0'}^{\sms}$ be its semi-simple part. The Zariski closure of the group generated by $F_{x_0'}^{\sms}$ is the maximal subgroup of multiplicative type of $G(i_0^*\calE_0, x')$. We call it the \textit{Frobenius group} attached to $x_0'$ and it is denoted by $M(\calE_0,x')$. Its connected component is the \textit{Frobenius torus} attached to $x_0'$, denoted by $T(\calE_0,x')$. If $\calE_0$ is $E$-rational, the torus $T(\calE_0,x')$ descends to a torus $\widetilde{T}(\calE_0,x')$ over $E$, such that $T(\calE_0,x')\simeq \widetilde{T}(\calE_0,x')\otimes_E \Elxp$.
	\end{cons}
	
	To prove our main theorem on Frobenius tori we first need another outcome of Deligne's conjecture. This is a finiteness result for the set of all the possible valuations of the Frobenius eigenvalues at closed points.
	\begin{nota}
		Let us fix a prime $\ell$. For every prime $\ell'$ (even $\ell'=\ell$ or $\ell'=p$), we denote by $I_{\ell'}(\Qlbar)$ the set of field isomorphisms $\Qlbar\iso \Qlpbar$, by $I_\infty(\Qlbar)$ the set of field isomorphisms $\Qlbar\iso \C$ and $$I(\Qlbar):=\left(\bigcup_{\ell'} I_{\ell'}(\calE_0)\right) \cup I_{\infty}(\calE_0).$$ For every $\ell'\neq p$ we endow $\Qlpbar$ with the $\ell'$-adic valuation $v:(\Qlpbar^\times,\times)\to (\R,+)$, normalized such that $v(\ell')=1$. On $\Qpbar$ we consider the $p$-adic valuation $v$, normalized so that $v(q)=1$. Finally, we endow $\C$ with the morphism $v:(\C^{\times},\times)\to (\R,+)$ defined by $a\mapsto \log_q(|a|)$.
	\end{nota}
	\begin{defi} Let $\calE_0$ be a $\Qlbar$-coefficient object. For every closed point $x_0\in |X_0|$, let $A_{x_0}(\calE_0)$ be the set of Frobenius eigenvalues at $x_0$. For $*=\ell',\infty$ we define $$V_*(\calE_0):=\left\{v(\iota(a))/\deg(x_0)\ |\ \ x_0\in |X_0|,\ a\in A_{x_0}(\calE_0),\ \iota\in I_*(\Qlbar) \right\}\subseteq \R.$$
	We also write $$V_{\neq p}(\calE_0):=\left(\bigcup_{\ell' \neq p} V_{\ell'}(\calE_0)\right) \cup V_{\infty}(\calE_0) \textrm{ and } V(\calE_0):=V_{p}(\calE_0)\cup V_{\neq p}(\calE_0).$$

	\end{defi}
	
	\begin{prop}\label{vals-finite:p}If $\calE_0$ be an algebraic $\Qlbar$-coefficient object, the set $V(\calE_0)$ is finite.
	\end{prop}
	\begin{proof} Let $R$ be the ring of integers of a number field. Since $R$ is a Dedekind domain, every element in $R$ belongs to finitely many prime ideals. Moreover, the set of infinite places of $R$ is finite. Therefore, if $a\in\Qlbar$ is algebraic, the set $\{v(\iota(a))\}_{{\iota \in I(\Qlbar)}}$ is finite. Thanks to this, it is harmless to twist our coefficient object by algebraic numbers. By Corollary \ref{rank-1-finite-order:c}, we can then assume that $\calE_0$ is irreducible with finite order determinant. By Theorem \ref{Deligne-conjecture-general-theorem}, the coefficient $\calE_0$ is pure of weight $0$, $E$-rational and $p$-plain. This implies that $V_{\neq p}=\{0\}$ and, by \cite[Lemma-Definition 4.3.2]{Ked}, that the set $V_p(\calE_0)$ is finite. This concludes the proof.
	\end{proof}

	\begin{nota}
	Let $(X_0,x)$ be a smooth connected pointed variety over $\Fq$ and $\calE_0$ an algebraic $\Qlbar$-coefficient object of rank $r$. Write $\GL_{r}$ for the algebraic group $\GL_{r,\Qlbar}$. For every $x'\in X_0(\F)$, we choose an isomorphism between $\omega_{x',\Qlbar}$ and $\omega_{x,\Qlbar}$, which exists by \cite[Theorem 3.2]{DM}, and a basis of $\omega_{x,\Qlbar}(\calE_0)$. This determines in turn an embedding $G(\calE_0,x')\hookrightarrow \GL_r$ for every $x'$. Let $\Gm^r\subseteq \GL_{r}$ be the standard maximal torus and $\chi_1,\dots,\chi_r$ the standard basis of $X^*(\Gm^r)$. The Frobenius torus $T(\calE_0,x)\subseteq G(\calE_0,x)\subseteq \GL_r$ is conjugated to some subtorus $T_{x'}\subseteq\Gm^r$. The torus $T_{x'}$ is uniquely determined up to the action of the permutation group $S_r$ on $\Gm^r$.  We write $C_T(\calE_0)$ for the set of $\GL_r$-conjugacy classes of the Frobenius tori at various $\F$-points of $X_0$. In what follows, for every $x'\in X_0(\F)$, we suppose chosen a representative subtorus $T_{x'}\subseteq\Gm^r$. Besides, we denote by $\alpha_{x',1},\dots,\alpha_{x',r}$ the Frobenius eigenvalues at $x'$. We assume that they are ordered in such a way that the diagonal matrix $(\alpha_{x',1},\dots,\alpha_{x',r})$ is in $T_{x'}(\Qlbar)$.

\end{nota}
	\begin{cons}\label{delta:cons}		 
Let $\Lambda^\vee_{\R}(\calE_0)$ be the set of $\R$-linear subspaces of $X_*(\Gm^r)_\R$ which admit a set of generators in $V(\calE_0)^r\subseteq X_*(\Gm^r)_\R$. For every $Y\subseteq X_*(\Gm^r)_\R$, the natural pairing $(\cdot,\cdot):\ X^*(\Gm^r) \times X_*(\Gm^r) \to \Z$ induces a $\Q$-linear morphism $f_Y: X^*(\Gm^{r})_\Q \to \Hom(Y,\R)$. Write $K_Y\subseteq X^*(\Gm^{r})_\Q$ for the kernel of $f_Y$ and $\Lambda(\calE_0)$ for the set of $\Q$-linear subspaces of $X^*(\Gm^{r})_\Q$ of the form $K_Y$ for some $Y\in \Lambda^\vee_{\R}(\calE_0)$. We have a natural action of $S_r$ on $\Lambda(\calE_0)$ which permutes the coordinates.

For every $\F$-point $x'$, we define $Y_{x'}\subseteq \R^{r}=X_*(\Gm^r)_\R$ as the $\R$-linear subspace generated by the elements ${y}_{x'}^\iota:=\big(y^\iota_{x',1},\dots,y^\iota_{x',r}\big)$, where $\iota$ is an element in $I(\Qlbar)$ and $y^\iota_{x',i}:=v(\iota(\alpha_{x',i}))$. By definition, $Y_{x'}$ is a subspace in $\Lambda^{\vee}_{\R}(\calE_0)$ and the class of $K_{Y_x}$ in $\Lambda(\calE_0)/S_r$ does not depend on the order of the Frobenius eigenvalues $\alpha_{x',1},\dots,\alpha_{x',r}$.
	\end{cons}
	\begin{prop}\label{injective-map:p}
		For every coefficient object $\calE_0$ there exists an injective map of sets $\delta:C_T(\calE_0)\to \Lambda(\calE_0)/S_r$ which sends the conjugacy class of a Frobenius torus $T_{x'}$ to the class of $K_{Y_{x'}}\subseteq X^*(\Gm^{r})_\Q$ (cf. Construction \ref{delta:cons}).
	\end{prop}
	\begin{proof}
		Let $K_{x'}\subseteq X^*(\Gm^r)_\Q$ be the $\Q$-linear subspace generated by all the $\chi\otimes 1\in  X^*(\Gm^r)_\Q$ such that the restriction $\chi|_{T_{x'}}$ of finite order. First, let us prove that $K_{x'}=K_{Y_{x'}}$.

		Note that it is enough to prove the equality after intersecting both $\Q$-vector spaces with the lattice $X^*(\Gm^r)\subseteq X^*(\Gm^r)_\Q$. Let us first prove that $K_{x'}\subseteq K_{Y_{x'}}$, namely that $f_{Y_{x'}}(K_{x'})=0$. For every character $\chi=\chi_1^{\otimes a_1}\otimes\cdots\otimes \chi_r^{\otimes a_r}\in X^*(\Gm^{r})\cap K_{x'}$, we have that $\beta_{x'}:=\alpha_{{x'},1}^{a_1}\cdot \ldots \cdot \alpha_{{x'},r}^{a_r}$ is a root of unity because $(\alpha_{{x'},1},\dots,\alpha_{{x'},r})\in T_{x'}(\Qlbar)$. Therefore, for every $\iota\in I(\Qlbar)$, $$f_{Y_{x'}}(\chi)(y_{x'}^\iota)=a_1y_{{x'},1}^\iota+\dots+a_ry_{{x'},r}^\iota=v(\iota(\beta_{x'}))=0.$$ This implies that $f_{Y_{x'}}(\chi)=0$. On the other hand, for every character $\chi=\chi_1^{\otimes a_1}\otimes \ldots \otimes \chi_r^{\otimes a_r}\in X^*(\Gm^{r})\cap K_{Y_{x'}}$ we want to show that the restriction of $\chi$ to $T_{x'}$ is finite. Since the subgroup generated by the point $(\alpha_{{x'},1},\dots,\alpha_{{x'},r})\in T_{x'}(\Qlbar)$ is Zariski dense in $T_{x'}$, it is enough to show that $\beta_{x'}:=\alpha_{{x'},1}^{a_1}\dots\alpha_{{x'},r}^{a_r}$ is a root of unity. For every $\iota\in I(\Qlbar)$, we have that $v(\iota(\beta_{x'}))=a_1y_{{x'},1}^\iota+\dots+a_ry_{{x'},r}^\iota=f_{Y_{x'}}(\chi)(y_{x'}^\iota)=0$. By Kronecker's theorem, it follows that $\beta_{x'}$ is a root of unity.
		
		Let us show now how the previous claim can be used to prove the final statement. Since $K_{x'}=K_{Y_{x'}}$, it follows that $X^*(T_{x'})_\Q=X^*(\Gm^r)_\Q/K_{Y_{x'}}$. Therefore, if $x''$ is another $\F$-point of $X_0$, we have that $T_{x'}$ and $T_{x''}$ are conjugated if and only if $K_{Y_{x'}}$ and $K_{Y_{x''}}$ are the same up to permutation of the coordinates. This proves that $\delta$ is a well defined injective map.
	\end{proof}
	
	\begin{coro}\label{finiteness-conj-Frob:c}
		Let $\calE_0$ be an algebraic $\Qlbar$-coefficient object. The set $C_T(\calE_0)$ is finite.
	\end{coro}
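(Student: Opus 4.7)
The plan is to combine the two preceding results directly. By Proposition~\ref{injective-map:p} we already have an injection $\delta : C(\calE_0) \hookrightarrow D(\calE_0)/S_r$, so it suffices to show that $D(\calE_0)$ is finite. Recall that $D(\calE_0)$ is the set of $\R$-linear subspaces of $X^*(\Gm^r)_\R \cong \R^r$ admitting a generating set inside $V(\calE_0)^r \subseteq X_*(\Gm^r)_\R$.

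Now I would invoke Proposition~\ref{valuations-finite:p}.(ii): since $\calE_0$ is assumed algebraic, the set $V(\calE_0) \subseteq \R$ of (normalized) valuations of Frobenius eigenvalues is finite. Consequently $V(\calE_0)^r$ is a finite subset of $\R^r$, and a subspace in $D(\calE_0)$ is determined by (any) subset of $V(\calE_0)^r$ whose $\R$-span equals it. Since there are only finitely many subsets of the finite set $V(\calE_0)^r$, the collection of their $\R$-spans is finite, i.e.\ $D(\calE_0)$ is finite. A fortiori $D(\calE_0)/S_r$ is finite, and the injectivity of $\delta$ forces $C(\calE_0)$ to be finite.

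There is no real obstacle here: the substantive content is packaged in Proposition~\ref{valuations-finite:p}.(ii) (which ultimately uses Kedlaya's $p$-adic finiteness result together with Deligne's conjecture for the $\ell \ne p$ places via Theorem~\ref{Deligne-conjecture-general-theorem}) and in Proposition~\ref{injective-map:p} (whose proof relies on Kronecker's theorem to recover a torus from the valuation data of one of its points). Once these two ingredients are in hand, the corollary is a one-line combinatorial consequence: finitely many valuations $\Rightarrow$ finitely many spanned subspaces $\Rightarrow$ finitely many Frobenius-torus conjugacy classes.
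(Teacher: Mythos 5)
Your argument is exactly the paper's: combine the injection $\delta\colon C(\calE_0)\hookrightarrow D(\calE_0)/S_r$ from Proposition~\ref{injective-map:p} with the finiteness of $V(\calE_0)$ from Proposition~\ref{valuations-finite:p}.(ii), note that this makes $D(\calE_0)$ finite, and conclude. You have merely spelled out the ``by definition'' step (finitely many generating subsets of $V(\calE_0)^r$ give finitely many spans) that the paper leaves implicit.
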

	\begin{proof}
		By Proposition \ref{vals-finite:p}, the set $V(\calE_0)$ is finite. Therefore, by construction, the sets $\Lambda^\vee_{\R}(\calE_0)$ and $\Lambda(\calE_0)$ are finite as well. Thanks to Proposition \ref{injective-map:p}, this shows that $C_T(\calE_0)$ is finite.
	\end{proof}

	From here, thanks to Chebotarev's density theorem, we could directly prove Theorem \ref{max-tori:t} for étale lisse sheaves as in the proof of \cite[Théorème at page 12]{Serre}. The result for non-étale lisse sheaves and overconvergent $F$-isocrystals follows from further two crucial facts.

	\begin{prop}[after Larsen--Pink]\label{max-tori-LP:p}
		Let $\calE_0$ and $\calF_0$ be two compatible coefficient objects over $X_0$. The reductive ranks of $G(\calE_0,x)$ and $G(\calF_0,x)$ are equal. 
	\end{prop}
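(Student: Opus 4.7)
The plan is to reduce the problem to a situation where Larsen--Pink's dimension data theorem applies. First, I would replace $\calE_0$ and $\calF_0$ by their semi-simplifications: this preserves compatibility since characteristic polynomials at closed points are unchanged, and it does not alter the reductive rank of the monodromy groups, because semi-simplification at the Tannakian level corresponds to passing to the reductive quotient $G/R_u(G)$, which shares a maximal torus with $G$. So one may assume that $\calE_0$ and $\calF_0$ are semi-simple and that $G(\calE_0,x)$ and $G(\calF_0,x)$ are reductive.

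Next, I would use Theorem \ref{Larsen-and-Pink-theorem} to produce a single cover that simultaneously kills both groups of connected components. Since the kernels of the natural surjections $\psi_{\calE_0}, \psi_{\calF_0}:\pi_1^{\et}(X_0,x)\to \pi_0(G(\calE_0,x)), \pi_0(G(\calF_0,x))$ coincide, the corresponding finite étale Galois cover $f_0:(Y_0,y)\to (X_0,x)$ satisfies $G(f_0^*\calE_0,y)\iso G(\calE_0,x)^\circ$ and $G(f_0^*\calF_0,y)\iso G(\calF_0,x)^\circ$ by Proposition \ref{connected-after-cover-proposition}. The pullbacks $f_0^*\calE_0$ and $f_0^*\calF_0$ remain compatible, since their Frobenius characteristic polynomials at a closed point $y_0\in |Y_0|$ above $x_0\in |X_0|$ are computed by substituting $F_{x_0}^{[\kappa(y_0):\kappa(x_0)]}$. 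As reductive rank is invariant under taking the neutral component, one may further assume that $G(\calE_0,x)$ and $G(\calF_0,x)$ are connected reductive subgroups of $\GL_r$ over their respective coefficient fields.

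With these reductions in place, Lemma \ref{dimension-data:l} asserts that for every finite-dimensional representation $\theta$ of $\GL_{r,\Q}$ the invariant subspaces $V_\theta^{G(\calE_0,x)}$ and $V_\theta^{G(\calF_0,x)}$ have the same dimension. After base-changing along any embeddings of the coefficient fields into $\C$, one obtains two connected reductive subgroups of $\GL_r(\C)$ with identical dimension data (all irreducible algebraic representations of $\GL_{r,\C}$ being defined over $\Q$). The main theorem of \cite{LP1} then guarantees that such subgroups have conjugate maximal tori in $\GL_r(\C)$; in particular, the reductive ranks agree. Since reductive rank is preserved under base change, this yields the desired equality for $G(\calE_0,x)$ and $G(\calF_0,x)$.

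The main delicate point is conceptual rather than computational: recognizing that Lemma \ref{dimension-data:l}, combined with Larsen--Pink's rigidity of dimension data, is precisely the right input. The technical issues — preservation of compatibility under semi-simplification and étale pullback, translating $\dim H^0(X_0,\calE_0(\theta))$ into the dimension of $G(\calE_0,x)$-invariants on $V_\theta$, and the compatibility with base change — are routine once the strategy is fixed.
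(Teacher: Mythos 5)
Your proof is correct and rests on the same two pillars as the paper's: Lemma \ref{dimension-data:l} (compatible semi-simple objects have the same dimension data) and Larsen--Pink's result \cite{LP1} that dimension data determine the reductive rank. The paper's own proof is considerably shorter: after reducing to the semi-simple case it applies \cite[Proposition 1]{LP1} directly to $G(\calE_0,x)$ and $G(\calF_0,x)$, with no passage to an \'etale cover, because that proposition already handles (possibly disconnected) reductive subgroups and extracts the conjugacy class of a maximal torus straight from the dimension data. Your intermediate step of invoking Theorem \ref{Larsen-and-Pink-theorem} and Proposition \ref{connected-after-cover-proposition} to make the monodromy groups connected is therefore redundant, though harmless; it also builds in a dependence on Theorem \ref{Larsen-and-Pink-theorem}, which the paper's version of the argument avoids. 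One small imprecision: you attribute to ``the main theorem of \cite{LP1}'' the conclusion that the maximal tori are $\GL_r(\C)$-conjugate. What you actually need --- and what \cite[Proposition 1]{LP1} gives --- is that dimension data determine the rank; the stronger ``conjugate tori'' phrasing is a separate statement in that paper and is not required for the conclusion here. Since all you use is equality of ranks, the argument still goes through.
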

	\begin{proof}We may assume that $\calE_0$ and $\calF_0$ are semi-simple, because semi-simplification keeps the reductive ranks of the monodromy groups unchanged. Moreover, thanks to Proposition~\ref{connected-after-cover-proposition}, we may assume that the monodromy groups of $\calE_0$ and $\calF_0$ are connected. We choose embeddings of the fields of scalars of $\calE_0$ and $\calF_0$ to $\C$ in such a way that the characteristic polynomials at closed points of the two coefficient objects are the same as polynomials in $\C[t]$. We write $G(\calE_0,x)_{\C}$ and $G(\calF_0,x)_{\C}$ for the base change of the monodromy groups of $\calE_0$ and $\calF_0$ to $\C$ and $\rho_{\calE_0,\C}$ and $\rho_{\calF_0,\C}$ for their tautological $\C$-linear representations. Thanks to Lemma \ref{dimension-data:l}, the pairs $(G(\calE_0,x)_{\C},\rho_{\calE_0,\C})$ and $(G(\calF_0,x)_{\C},\rho_{\calF_0,\C})$ satisfy the hypothesis of \cite[Proposition 1]{LP1}. Therefore, $G(\calE_0,x)_{\C}$ and $G(\calF_0,x)_{\C}$ have isomorphic maximal tori. This gives the desired result.
	\end{proof}

\begin{lemm}\label{almost-etale:l}
Let $\underline{\calE_0}$ be an $E$-compatible system. For all but finitely many $\lambda\in |E|_{\neq p}$, the $\lambda$-component of $\underline{\calE_0}$ is an étale lisse sheaf.
\end{lemm}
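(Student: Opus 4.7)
The plan is to reduce to Proposition~\ref{p-plain-etale:p}, which says that a Weil lisse $\Qlbar$-sheaf all of whose Frobenius eigenvalues at closed points are $\ell$-adic units is automatically étale. So the task is to show that for all but finitely many $\lambda \in |E|_{\neq p}$, every Frobenius eigenvalue of $\calE_{\lambda,0}$ at every closed point is a $\lambda$-adic unit.

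To bound the set of bad $\lambda$, I would fix any $\lambda_0 \in |E|_{\neq p}$ and apply Proposition~\ref{valuations-finite:p}.(i) to $\calE_{\lambda_0,0}$ (which is algebraic, being $E$-rational): the set $V_{\neq p}(\calE_{\lambda_0,0})$ of $\ell'$-adic valuations of Frobenius eigenvalues (for $\ell' \neq p$, over all primes above $\ell'$) coincides with $V^{\Theta}_{\neq p}(\calE_{\lambda_0,0})$, a set determined by the finite set of types $\Theta(\calE_{\lambda_0,0})$ (finiteness from Corollary~\ref{rank-1-finite-order:c}). Each type is an algebraic number, so it has nonzero $\ell'$-adic valuation at only finitely many rational primes $\ell'$. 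Since there are only finitely many types, the set
\[
S := \{p\} \cup \bigl\{\ell' \neq p : V_{\ell'}(\calE_{\lambda_0,0}) \not\subseteq \{0\}\bigr\}
\]
is finite, and for $\ell' \notin S$ every Frobenius eigenvalue of $\calE_{\lambda_0,0}$ is an $\ell'$-adic unit at every prime above $\ell'$.

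For any $\lambda \in |E|_{\neq p}$ lying above $\ell' \notin S$, the $E$-compatibility of $\underline{\calE_0}$ forces the identity $P_{x_0}(\calE_{\lambda,0},t) = P_{x_0}(\calE_{\lambda_0,0},t)$ in $E[t]$ for every closed point $x_0$, so the Frobenius eigenvalues of $\calE_{\lambda,0}$ (viewed as algebraic numbers in $\overline{E}$) agree with those of $\calE_{\lambda_0,0}$ and are therefore $\lambda$-adic units by the previous paragraph. Proposition~\ref{p-plain-etale:p} then shows $\calE_{\lambda,0}$ is étale. Only finitely many $\lambda \in |E|_{\neq p}$ lie above some rational prime in the finite set $S$, which completes the argument. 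I expect no serious obstacle: the crux is the reduction from the infinitely many Frobenius valuations to valuations of the finite set of types via Proposition~\ref{valuations-finite:p}.(i), which is already available.
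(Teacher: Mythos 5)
Your proof is correct, but it is heavier than the paper's and stems from a slight misreading of Proposition~\ref{p-plain-etale:p}. That proposition requires the $\ell$-adic unit condition only at a \emph{single} closed point $x_0$ (its proof exploits that $W(X_0,x)$ is generated by $\pi_1^{\et}(X,x)$ and one Frobenius element $\gamma\in F_{x_0}$), not at all closed points as you assume in your opening sentence. The paper's proof is therefore essentially one line: fix one closed point $x_0$; since the system is $E$-rational, the finitely many Frobenius eigenvalues of any fixed component at $x_0$ are algebraic numbers, hence $\ell$-adic units for all but finitely many rational primes $\ell$; for any $\lambda\in|E|_{\neq p}$ over such an $\ell$, the $\lambda$-component has the same eigenvalues at $x_0$, and Proposition~\ref{p-plain-etale:p} applies. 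You instead prove the stronger uniform statement over \emph{all} closed points via Proposition~\ref{valuations-finite:p}.(i) and the finiteness of types (Corollary~\ref{rank-1-finite-order:c}); this is valid, but it pulls in Theorem~\ref{Deligne-conjecture-general-theorem} (i.e.\ Deligne's conjecture, parts (i) and (iii)), which the lemma does not actually need. What the detour buys you is the extra uniformity in $x_0$; what it costs is a dependence on the Langlands-input theorems that the paper's elementary argument avoids.
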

\begin{proof}
	
	Let $\calE_0$ be a component of $\underline{\calE_0}$. By Corollary \ref{rank-1-finite-order:c} we know that $\calE_0^{\sms}\simeq \bigoplus_{i=1}^n \calF_{i,0}^{(a_i)}$ where for every $i$, the coefficient object $\calF_{i,0}$ is absolutely irreducible with finite order determinant. By Theorem \ref{Deligne-conjecture-general-theorem}, the coefficient objects $\calF_{i,0}$ are $p$-plain. In addition, since $\calE_0$ is algebraic, we know that each $a_i$ is an algebraic number. Arguing as in Proposition \ref{vals-finite:p}, this implies that for all but finitely many primes $\ell\neq p$, the algebraic numbers $a_i$ are $\ell$-adic units. Therefore, for all but finitely many $\ell\neq p$, the Frobenius eigenvalues of $\calE_0$ are $\ell$-adic units. By compatibility we deduce that for all but finitely $\lambda\in |E|_{\neq p}$, the Frobenius eigenvalues of the $\lambda$-component of $\underline{\calE_0}$ are $\lambda$-adic units. By Proposition \ref{p-plain-etale:p}, this implies that all these $\lambda$-components are étale lisse sheaves. This yields the desired result.
\end{proof}

	\begin{theo}\label{max-tori:t}
		Let $X_0$ be a smooth connected variety over $\Fq$ and $\calE_0$ an algebraic coefficient object. There exists a Zariski-dense subset $\Delta\subseteq X(\F)$ such that for every $\F$-point $x'\in \Delta$ and every object $\calF_0\in \langle \calE_0 \rangle$, the torus $T(\calF_0,x')$ is a maximal torus of $G(\calF_0,x')$. Moreover, if $\calG_0$ is a coefficient object compatible with $\calE_0$, the subset $\Delta$ satisfies the same property for the objects in $\langle \calG_0 \rangle$.
	\end{theo}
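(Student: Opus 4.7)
\textbf{Proof plan for Theorem~\ref{max-tori:t}.}
The plan is to reduce to the case of an algebraic étale lisse sheaf via the theory of compatible systems, establish the statement there by combining \v{C}ebotarev density with the finiteness result Corollary~\ref{finiteness-conj-Frob:c} (Serre's method), and transfer back to $\calE_0$ and $\calG_0$ using the Larsen--Pink dimension-data comparison of Proposition~\ref{max-tori-LP:p}.

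First I would invoke Theorem~\ref{companions-smooth-theorem} to embed $\calE_0$ and $\calG_0$ as components of a common $E$-compatible system $\underline{\calE_0}$, and then use Lemma~\ref{almost-etale:l} to fix an étale lisse $\lambda$-component $\calV_0$, which is automatically compatible with both $\calE_0$ and $\calG_0$. The task then reduces to producing a Zariski-dense $\Delta\subseteq X(\F)$ such that $T(\calV_0,x)$ is a maximal torus of $G(\calV_0,x)$ for every $x\in\Delta$. Indeed, for $\calH_0\in\{\calE_0,\calG_0\}$, the compatibility with $\calV_0$ ensures that the Frobenius eigenvalues at $x_0$ are the same algebraic numbers on both sides, hence $\dim T(\calH_0,x)=\dim T(\calV_0,x)$; combining this with $\rk G(\calH_0,x)=\rk G(\calV_0,x)$ (Proposition~\ref{max-tori-LP:p}) yields that $T(\calH_0,x)$ is already a maximal torus of $G(\calH_0,x)$. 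For an arbitrary $\calF_0\in\langle\calH_0\rangle$, the Tannakian surjection $G(\calH_0,x)\twoheadrightarrow G(\calF_0,x)$ carries $T(\calH_0,x)$ onto $T(\calF_0,x)$, since the Jordan decomposition commutes with morphisms of algebraic groups, and surjections of linear algebraic groups send maximal tori to maximal tori, so the property propagates to the whole Tannakian subcategory.

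For the core case of the étale lisse sheaf $\calV_0$, after possibly replacing $\calV_0$ by its semi-simplification (which preserves Frobenius eigenvalues and the reductive rank of the monodromy group), write $G:=G(\calV_0,x)$ with reductive rank $d$ and fix a maximal torus $T\subseteq G^\circ$. By Corollary~\ref{finiteness-conj-Frob:c}, only finitely many $\GL_r$-conjugacy classes of subtori $T_1,\dots,T_n$ arise as Frobenius tori $T(\calV_0,x_0)$; realise them as subtori of $G$. Suppose for contradiction that $d':=\max_i\dim T_i<d$. Then the finite union $Y:=\bigcup_{i=1}^n\{\,g T_i g^{-1}:g\in G\,\}$ has dimension at most $\dim G-(d-d')<\dim G$, so its Zariski closure $\overline{Y}$ is a proper subvariety of $G$; by construction every semi-simple part $\rho(F_{x_0})^{\mathrm{ss}}$ lies in $Y$. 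On the other hand, the image $\Pi:=\rho(\pi_1^{\et}(X_0,x))\subseteq G(\Ql)$ is Zariski-dense in $G$ by definition of monodromy group, and \v{C}ebotarev density makes $\{\rho(F_{x_0})\}$ topologically dense in $\Pi$ in the $\ell$-adic topology, hence Zariski-dense in $G$. Its intersection with the open regular semi-simple locus $G^{\mathrm{rs}}\subseteq G$ is therefore Zariski-dense in $G^{\mathrm{rs}}$, and on $G^{\mathrm{rs}}$ every element coincides with its semi-simple part; but then each such Frobenius sits inside $\overline{Y}\cap G^{\mathrm{rs}}\subsetneq G^{\mathrm{rs}}$, a contradiction. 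Hence some $\dim T_i=d$, i.e.\ some Frobenius torus is a maximal torus of $G^\circ$. Refining via \v{C}ebotarev density, the set of closed points $x_0\in|X_0|$ with $\rho(F_{x_0})\notin\overline{Y}$ has positive Dirichlet density and so is Zariski-dense in $X_0$; applying the same argument on every open subvariety $X_0\setminus Z_0$ (whose monodromy group still equals $G$ for a lisse sheaf) and lifting to $\F$-points yields the desired Zariski-dense subset $\Delta$.

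The main technical obstacle is to ensure that the bad set $Y$ is a \emph{proper} subvariety of $G$: without the finiteness of Corollary~\ref{finiteness-conj-Frob:c}, one would control the semi-simple parts of Frobenii only inside an uncountable union of proper subvarieties, which could a priori be Zariski-dense. That finiteness — whose proof passes through Proposition~\ref{injective-map:p} and the finiteness of $V(\calE_0)$ in Proposition~\ref{valuations-finite:p}, and hence ultimately through Deligne's conjectures (Theorem~\ref{Deligne-conjecture-general-theorem}) — is the essential new ingredient beyond the classical étale case treated by Serre and Chin, and its extension to $p$-adic coefficient objects is what makes the proof work uniformly for both kinds of coefficient objects.
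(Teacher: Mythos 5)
You follow essentially the same route as the paper: embed $\calE_0$ and $\calG_0$ in a compatible system and pick an \'etale lisse component $\calV_0$ (Theorem~\ref{companions-smooth-theorem}, Lemma~\ref{almost-etale:l}); use the compatibility of Frobenius eigenvalues together with Proposition~\ref{max-tori-LP:p} to transfer maximality of the Frobenius torus between compatible objects; and propagate to $\calF_0\in\langle\calH_0\rangle$ via the Tannakian surjection $G(\calH_0,x)\twoheadrightarrow G(\calF_0,x)$, exactly as in the first paragraph of the paper's proof. The place where you genuinely diverge is the core \'etale case: the paper first reduces via Proposition~\ref{connected-after-cover-proposition} to connected $G(\calV_0,x)$ and then defers the density argument to Serre's letter and Chin's Theorem~5.7, whereas you spell the density argument out --- and in doing so you introduce a real gap.

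The gap is the assertion that ``by construction every semi-simple part $\rho(F_{x_0})^{\mathrm{ss}}$ lies in $Y=\bigcup_i\{gT_ig^{-1}\}$.'' This would require $\rho(F_{x_0})^{\mathrm{ss}}\in T(\calV_0,x_0)$, but $T(\calV_0,x_0)$ is by definition only the \emph{identity component} of the Frobenius group $M(\calV_0,x_0)=\overline{\langle\rho(F_{x_0})^{\mathrm{ss}}\rangle}$, and $\rho(F_{x_0})^{\mathrm{ss}}$ need not lie in that identity component. Concretely: if the Frobenius eigenvalues at $x_0$ are $\{-q,\,q\}$, the relation lattice is generated by $(2,-2)$, so $M=\{(t,\pm t)\}$, $T=\{(t,t)\}$, and $\rho(F_{x_0})^{\mathrm{ss}}=(-q,q)\notin T$. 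Since Corollary~\ref{finiteness-conj-Frob:c} only controls the conjugacy classes of the \emph{tori} $T$ --- not the groups $M$, whose component groups $M/T$ are not a priori uniformly bounded --- you cannot repair this by replacing $T_i$ with $M_i$ in the definition of $Y$. Handling the discrepancy between $M$ and $T$ is exactly what Serre's letter (and Chin's Theorem~5.7, cited by the paper) takes care of, typically by using the finiteness to pass to a suitable \'etale cover/base extension where the relevant component groups are killed. Until such a step is supplied, the contradiction you derive from the density of regular semi-simple Frobenii does not go through. The surrounding pieces --- the dimension bound $\dim Y\le\dim G-(d-d')$, the \v{C}ebotarev input, and the restriction-to-opens argument for Zariski density of $\Delta$ --- are sound, as is the transfer step via eigenvalue compatibility and Proposition~\ref{max-tori-LP:p}.
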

	\begin{proof}
		Let $x'$ be a geometric point and $i_0:x'_0\hookrightarrow X_0$ the embedding of the underlying closed point. For every object $\calF_0\in\langle \calE_0 \rangle$, we have a commutative square of functors
		\begin{center}
			\begin{tikzcd}
				\langle\calF_0\rangle \arrow[hook,r] \arrow[d,"i_0^*"] &  \langle\calE_0 \rangle \arrow[d,"i_0^*"]\\
				\langle i_0^*\calF_0\rangle \arrow[hook,r] & \langle i_0^*\calE_0 \rangle.\
			\end{tikzcd}
		\end{center}
		It induces a square on monodromy groups 
		\begin{center}
			\begin{tikzcd}
				G(\calF_0,x') & \arrow[two heads, l]    G(\calE_0,x')\\
				M(\calF_0,x')\arrow[u,hook,"i_{0*}"] &     \arrow[two heads,l]M(\calE_0,x')\arrow[u,hook,"i_{0*}"] .\
			\end{tikzcd}
		\end{center}
		If $T(\calE_0,x')$ is a maximal torus in $G(\calE_0,x')$, then the same is true for $T(\calF_0,x')$ in $G(\calF_0,x')$, \cite[Proposition 11.14.(1)]{Bor91}. This shows that it is enough to prove the result when $\calF_0=\calE_0$. Moreover, we may assume that $\calE_0$ is semi-simple, because semi-simplification does not change the reductive rank of the monodromy group. 
		
		We notice that by Proposition \ref{max-tori-LP:p}, if $\calG_0$ is a coefficient object compatible with $\calE_0$, the torus $T(\calE_0,x')$ is maximal in $G(\calE_0,x')$ if and only if $T(\calG_0,x')$ is maximal in $G(\calG_0,x')$. Therefore, it is enough to prove the result for some coefficient object compatible with $\calE_0$. By Theorem \ref{companions-smooth-theorem}, $\calE_0$ sits in a semi-simple compatible system $\underline{\calE_0}$. By Lemma \ref{almost-etale:l}, there exists a component of $\underline{\calE_0}$ which is an étale lisse sheaf. Let us denote it by $\calV_0$. After replacing $X_0$ with a connected finite étale cover we may assume by Proposition \ref{connected-after-cover-proposition} that $G(\calV_0,x')$ is connected for any choice of $x'$. We choose an $\F$-point $x'$ of $X$. By Corollary \ref{finiteness-conj-Frob:c}, the set of conjugacy classes of Frobenius tori $T(\calV_0,x')$ in $\GL(\omega_{x',\Qlbar}(\calV_0))$, where $x'$ varies among the $\F$-points of $X_0$, is finite. 
		Arguing as in \cite[theorem at page 12]{Serre}, by Chebotarev's density theorem for the étale fundamental group of $X_0$, there exists a Zariski-dense subset $\Delta\subseteq X(\F)$ such that for every $\F$-point $x\in \Delta$, the torus $T(\calV_0,x)$ is maximal inside $G(\calV_0,x)$ (see also \cite[Theorem 5.7]{Chin2} for more details). This concludes the proof.
	\end{proof}

Theorem \ref{max-tori:t} is a crucial step in the proof of Theorem \ref{neutral-component-theorem}. Nonetheless, it has also its own interest. In \cite{AD18}, it is used as a starting point to study the reductive rank of the monodromy groups of \textit{convergent $F$-isocrystals} which admit an overconvergent extension. Here another consequence.
	\begin{coro}\label{semi-simple-at-points:c}
	Let $\calE_0$ be a semi-simple $\Qlbar$-coefficient object. The set of closed points $x_0'$ of $X_0$ where the Frobenius $F_{x'_0}$ is regular semi-simple is Zariski-dense in $X_0$.
\end{coro}
\begin{proof}
	As usual, let us start out with some preliminary reductions. We first observe that by \cite[Proposition 12.4.(2)]{Bor91}, if the statement is true for $\calE_0$, then the same is true for every $\calF_0\in \langle \calE_0 \rangle$. In addition, if we know the result for $\calE_0$ and $\calF_0$ at the same time, we can deduce it for $\calE_0\oplus \calF_0$ since $G(\calE_0\oplus \calF_0)\subseteq G(\calE_0)\times G(\calF_0)$ and the property of being a regular semi-simple element is preserved by taking products. Finally, note that for a coefficient object of the form $\Qlbar^{(a)}$ with $a\in \Qlbar$ the result is trivial. Combining these three facts and Corollary \ref{rank-1-finite-order:c} we deduce that it is harmless to assume that $\calE_0$ is an irreducible coefficient object with finite order determinant. By Theorem \ref{Deligne-conjecture-general-theorem}, the coefficient object $\calE_0$ is then algebraic. Thanks to Theorem \ref{max-tori:t}, the set of $\F$-points $x'$ such that the torus $T(\calE_0,x')$ is a maximal torus of $G(\calE_0,x')$ is Zariski-dense in $X_0$. We claim that if $x'$ is one of these $\F$-points, the Frobenius $F_{x'_0}$ is regular semi-simple. Let $Z(F_{x'_0}^{\mathrm{ss}})$ be the centraliser of $F_{x'_0}^{\mathrm{ss}}$ in $G(\calE_0,x')$. By construction, $Z(F_{x'_0}^{\mathrm{ss}})$ coincides with the centraliser of $M(\calE_0,x')$ in $G(\calE_0,x')$. Since $G(\calE_0,x')$ is reductive and $M(\calE_0,x')$ contains a maximal torus, we deduce that $Z(F_{x'_0}^{\mathrm{ss}})^\circ=T(\calE_0,x')$. If we write $F_{x'_0}^{\mathrm{u}}$ for the unipotent part of the multiplicative Jordan form of $F_{x'_0}$, we know that it lies in $Z(F_{x'_0}^{\mathrm{ss}})^\circ(\Qlbar)$. This implies that $F_{x'_0}^{\mathrm{u}}=1$, thus $F_{x_0'}$ is regular semi-simple.
\end{proof}
	
			\begin{rema}
			
			In the proof of Theorem \ref{max-tori:t}, we need Deligne's conjecture in order to prove the following three properties of the coefficient object $\calE_0$.
			\begin{itemize}
				\item[{(i)}] $\calE_0$ is $\iota$-mixed.
				\item[{(ii)}] $V(\calE_0)$ is a finite set.
				\item[\normalfont(iii)] $\calE_0$ admits a compatible étale lisse sheaf.
			\end{itemize}
			For many coefficient objects “coming from geometry”, it is possible to prove these properties directly, without using Theorem \ref{Langlands-theorem}.
		\end{rema}
	\subsection{The neutral component}
	\label{the-neutral-component-ss}
	We start with a first result on the independence of the neutral components of the monodromy groups of coefficient objects. As in Theorem \ref{Larsen-and-Pink-theorem}, the independence result we need here is Corollary~\ref{independence-triviality:c}. 
	\begin{prop}
		\label{weak-neutral-component-p}
		Let $X_0$ be a smooth geometrically connected variety over $\Fq$. Let $\calE_0$ and $\calF_0$ be two compatible coefficient objects over $X_0$.
		\begin{enumerate}
			\item[{(i)}] If $\calE_0$ and $\calF_0$ are semi-simple, $\calE_0$ is finite if and only if $\calF_{0}$ is finite.
			\item[{(ii)}]If $\calE_0$ and $\calF_0$ are geometrically semi-simple, $\calE$ is finite if and only if $\calF$ is finite.

		\end{enumerate}
	\end{prop}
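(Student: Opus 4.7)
The plan is to rephrase ``$\calE_0$ is finite'' (resp.\ ``$\calE$ is finite'') as the vanishing of the reductive rank of $G(\calE_0,x)$ (resp.\ of $G(\calE,x)$), and then to invoke the dimension--data argument already used in the proof of Proposition~\ref{max-tori-LP:p}, once at the arithmetic level and once at the geometric level.

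First I would recall that in characteristic zero, the neutral component of the Tannakian group of $\langle V\rangle$ is reductive whenever $V$ is semi-simple. Under the assumptions of (i) this applies to both $G(\calE_0,x)$ and $G(\calF_0,x)$; under the assumptions of (ii), since $\calE$ and $\calF$ are semi-simple by hypothesis, it applies to $G(\calE,x)$ and $G(\calF,x)$. In each case, the monodromy group in question is finite if and only if its neutral component is trivial, if and only if its reductive rank vanishes.

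Part (i) then follows immediately from Proposition~\ref{max-tori-LP:p}. For part (ii) the task reduces to the analogous independence of the reductive rank for the geometric monodromy groups, and I would prove this by mimicking the proof of Proposition~\ref{max-tori-LP:p}: since $\calE_0$ and $\calF_0$ are geometrically semi-simple, for every rational representation $\theta$ of $\GL_{r,\Q}$ the pair $(\calE_0(\theta),\calF_0(\theta))$ is again compatible and geometrically semi-simple (both objects living in the semi-simple subcategories $\langle\calE\rangle$ and $\langle\calF\rangle$), so Corollary~\ref{independence-triviality:c}.(i) yields
\[
\dim\theta^{G(\calE,x)} \;=\; \dim H^0(X,\calE(\theta)) \;=\; \dim H^0(X,\calF(\theta)) \;=\; \dim\theta^{G(\calF,x)}.
\]
Hence $G(\calE,x)$ and $G(\calF,x)$ share the same dimension data, and applying \cite[Proposition~1]{LP1} as in the proof of Proposition~\ref{max-tori-LP:p} gives the desired equality of reductive ranks.

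No genuine obstacle arises. The only slightly delicate point is to verify that the Larsen--Pink dimension--data criterion, nominally formulated over a single algebraically closed base field, is insensitive to the fact that $G(\calE,x)$ and $G(\calF,x)$ are naturally defined over different coefficient fields $\Qlbar$ and $\Qlpbar$; this is routine because the reductive rank is preserved under extensions of algebraically closed fields of characteristic zero, and the dimension data are intrinsic numerical invariants.
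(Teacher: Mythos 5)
Your proof is correct but takes a genuinely different route from the paper's. The paper's argument is a short reduction: by Proposition~\ref{connected-after-cover-proposition} (combined with Corollary~\ref{semi-simple-etale-cover:c} to preserve the hypotheses) one passes to a connected finite \'etale cover where the arithmetic and geometric monodromy groups of both $\calE_0$ and $\calF_0$ become connected; there ``finite'' means ``trivial'', i.e.\ $\dim H^0 = \rk$, and Corollary~\ref{independence-triviality:c} together with $\iota$-mixedness (Theorem~\ref{Deligne-conjecture-general-theorem}) gives the equivalence directly. You instead characterise finiteness as vanishing of the reductive rank and invoke the Larsen--Pink dimension-data criterion, in the process proving a geometric analogue of Proposition~\ref{max-tori-LP:p} — equality of reductive ranks of $G(\calE,x)$ and $G(\calF,x)$ for compatible geometrically semi-simple coefficient objects — which is not stated explicitly in the paper but is a clean observation in its own right. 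Both proofs ultimately rest on Corollary~\ref{independence-triviality:c}; the paper's is leaner, while yours exposes more of the structural picture that feeds into Theorem~\ref{max-tori:t}. Two points you should nail down: (1) you implicitly use that semi-simplicity of $\calE_0$ (resp.\ of $\calE$) forces $G(\calE_0,x)$ (resp.\ $G(\calE,x)$) to be reductive, which is what makes ``finite $\Leftrightarrow$ reductive rank zero'' and the Larsen--Pink criterion applicable — state this; (2) you cite Lemma~\ref{dimension-data:l}, which is formulated for semi-simple coefficient objects, but in part (ii) you only have geometric semi-simplicity; you correctly note that the geometric half of the lemma's conclusion only needs Corollary~\ref{independence-triviality:c}.(i), whose hypothesis is geometric semi-simplicity, but this requires a re-run of the lemma's proof rather than a black-box citation.
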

	
	\begin{proof}
		Thanks to Proposition~\ref{connected-after-cover-proposition}, we may assume that the arithmetic and the geometric monodromy groups of $\calE_0$ and $\calF_0$ are connected. By Theorem \ref{Deligne-conjecture-general-theorem}, we know that $\calE_0$ and $\calF_0$ are $\iota$-mixed. Thus, thanks to Corollary~\ref{independence-triviality:c}, the coefficient object $\calE_0$ is trivial (resp. geometrically trivial) if and only if the same is true for $\calF_0$. 
	\end{proof}

	The next result we want to prove is a generalization of \cite[Theorem~1.4]{Chin2}. 
	
	\begin{theo}
		\label{neutral-component-theorem}
Let $(X_0,x)$ be a smooth connected pointed variety over $\Fq$, $E$ a number field and $\underline{\calE_{0}}$ a semi-simple $E$-compatible system over $X_0$. For every $\lambda\in \Sigma$, let $\rho_{\lambda,0}$ be the associated representation on $\omega_{x,E_{\lambda}}(\calE_{\lambda, 0})$. After possibly replacing $E$ by a finite extension, there exists a connected split reductive group $G_0$ over $E$ such that, for every $\lambda\in \Sigma$, the extension of scalars $G_0\otimes_E \El$ is isomorphic to $G(\calE_{\lambda, 0},x)^\circ$. Moreover, there exists a faithful $E$-linear representation $\rho_0$ of $G_0$ and isomorphisms $\varphi_{\lambda,0}: G_{0}\otimes_E E_{\lambda} \xrightarrow{\sim} G(\calE_{\lambda, 0},x)^{\circ}$ for every $\lambda\in \Sigma$ such that $\rho_0\otimes_E E_{\lambda}$ is isomorphic to $\rho_{\lambda,0}\circ \varphi_{\lambda,0}$.
	\end{theo}

	Following Chin, we use a reconstruction theorem of a reductive group from the Grothendieck semiring of its category of finite-dimensional representations.
	
		\begin{nota}
		If $\bfC$ is a Tannakian category, we denote by $K^+(\bfC)$ its \textit{Grothendieck semiring}, namely the semiring of isomorphism classes of semi-simple objects of $\bfC$ with sum and product induced by $\oplus$ and $\otimes$. If $\calE_0$ is a coefficient object and $\bfC=\langle \calE_0 \rangle$, we denote $K^+(\bfC)$ by $K^+(\calE_0)$. Finally, when $\bfC=\Rep(G)$ with $G$ an algebraic group, we will write $K^+(G)$.
	\end{nota}

	\begin{theo}[{\normalfont\cite[Theorem~1.4]{Chin3}}]
		
		\label{Chin-theorem}
		Let $G$ and $G'$ be two connected split reductive groups, defined over a field $\KK$ of characteristic $0$. Let $T$ and $T'$ be maximal tori of $G$ and $G'$ respectively. For every pair of isomorphisms $\varphi_{T'}: T'\iso T$ and $f: K^+(G)\xrightarrow{\sim} K^+(G')$ making the following diagram commuting
		\begin{center}
			\begin{tikzcd}
				K^+(G) \arrow{r}{f} \arrow[d] & K^+(G')\arrow[d]\\
				K^+(T)  \arrow[r,"\varphi_{T'}^*"] & K^+(T'),\
			\end{tikzcd}
		\end{center}
		there exists an isomorphism $\varphi: G'\xrightarrow{\sim} G$ of algebraic groups such that the induced homomorphism $\varphi^*$ on the Grothendieck semirings is equal to $f$ and the restriction of $\varphi$ to $T'$ is equal to $\varphi_{T'}$.
	\end{theo}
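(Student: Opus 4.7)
\emph{Strategy.} The plan is to reduce to the Chevalley--Demazure classification of split connected reductive groups in characteristic zero: such a group $G$ is determined, up to isomorphism, by its root datum $(X^*(T),\Phi,X_*(T),\Phi^\vee)$, and any isomorphism of root data compatible with a prescribed isomorphism of maximal tori lifts (uniquely up to inner automorphism by $T$) to an isomorphism of the ambient groups. I will extract the root datum of $G$ intrinsically from the pair $(K^+(G),T)$ together with the restriction map $K^+(G)\to K^+(T)$, so that the given data $(f,\varphi_{T'})$ visibly yield an isomorphism of root data; the final $\varphi$ is then produced by Chevalley--Demazure.

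\emph{Reading off the root datum.} The torus $T$ determines the character lattice $X^*(T)$ and its dual $X_*(T)$, and canonically identifies $K^+(T)$ with the semiring $\N[X^*(T)]$. Because $G$ is split reductive in characteristic zero, restriction identifies $K^+(G)$ with the subsemiring $K^+(T)^W\subseteq K^+(T)$ of Weyl-invariants, which admits two natural $\N$-bases: the orbit sums $\{s_{O}\}$ indexed by $W$-orbits in $X^*(T)$, and the irreducible characters $\{\chi_\lambda\}$ indexed by dominant weights; the two are related by a unitriangular matrix in the dominance order, so each determines the other. From the embedding $K^+(G)\hookrightarrow K^+(T)$ one recovers the partition of $X^*(T)$ into $W$-orbits, and hence the Weyl group $W$ as the group of lattice automorphisms of $X^*(T)$ preserving this orbit structure (up to diagram automorphisms, which will not affect the root datum). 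The root system $\Phi\subset X^*(T)$ is then characterised as the set of primitive vectors $\alpha$ whose associated reflection lies in $W$, and the coroots $\Phi^\vee\subset X_*(T)$ are recovered from $\Phi$ and the $W$-action via the reflection formula $s_\alpha(x)=x-\langle x,\alpha^\vee\rangle\alpha$ combined with the perfect pairing $X^*(T)\times X_*(T)\to\Z$.

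\emph{Building $\varphi$.} By hypothesis the diagram relating $f$ and $\varphi_{T'}^*$ forces the lattice isomorphism $\varphi_{T'}^*:X^*(T)\iso X^*(T')$ to carry $W$-orbits to $W'$-orbits; hence it transports $W$ to $W'$, $\Phi$ to $\Phi'$, and $\Phi^\vee$ to $(\Phi')^\vee$. This is an isomorphism of root data compatible with the prescribed identification of the tori, so Chevalley--Demazure produces an isomorphism $\varphi:G'\iso G$ whose restriction to $T'$ equals $\varphi_{T'}$, unique up to the action of $N_G(T)/T=W$. The induced map $\varphi^*$ on Grothendieck semirings agrees with $f$ after restriction to $K^+(T')$ (both reduce to $\varphi_{T'}^*$), and since elements of $K^+(G')\simeq K^+(T')^{W'}$ are detected by their images in $K^+(T')$, it follows that $\varphi^*=f$. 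Any remaining $W$-ambiguity in $\varphi$ is then killed by the pointwise condition $\varphi|_{T'}=\varphi_{T'}$.

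\emph{Main obstacle.} The delicate step is the intrinsic recovery of $W\subset\GL(X^*(T))$ from the embedding $K^+(G)\hookrightarrow K^+(T)$: the naive stabiliser of $K^+(T)^W$ inside $\GL(X^*(T))$ can strictly exceed $W$ by diagram automorphisms (for instance in type $A_n$ with $n\geq 2$, where $-w_0$ acts nontrivially on the lattice but preserves all $W$-orbits). This is handled either by a case-by-case analysis on the irreducible root systems, or, more uniformly, by identifying $\Phi$ directly from the character of the adjoint representation, which, after decomposition along the derived subgroup, is for each simple factor the unique irreducible whose highest weight is the highest root; $W$ and its action on both lattices are then determined by $\Phi$, and the root datum is rigid against the outer-automorphism ambiguity because the isomorphism $\varphi_{T'}$ is fixed at the outset. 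Once this point is secured, every remaining step reduces to formal manipulations with semirings and to standard Chevalley structure theory.
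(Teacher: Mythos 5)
This statement is quoted in the paper from Chin's article \cite{Chin3} without proof, so there is no internal argument to compare against; what matters is whether your reconstruction is sound, and it is not. The fatal step is in ``Building $\varphi$'': from the fact that $\varphi_{T'}^*$ carries $W$-orbits to $W'$-orbits you conclude that it transports $W$ to $W'$ and $\Phi$ to $\Phi'$. Even granting the first of these, the second does not follow, because a root datum is not determined by the pair (character lattice, Weyl group acting on it), let alone by the orbit partition. The standard counterexample is $G=\mathrm{SO}(2n+1)$ versus $G'=\mathrm{Sp}(2n)$: both have character lattice $\Z^n$ with $W=W'$ the group of signed permutations, so the identity map preserves all orbits and conjugates $W$ to $W'$, yet $\Phi=\{\pm e_i\pm e_j,\pm e_i\}$ and $\Phi'=\{\pm e_i\pm e_j,\pm 2e_i\}$ are not carried to one another and the groups are not isomorphic. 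Your argument uses only the orbit partition extracted from $K^+(G)\hookrightarrow K^+(T)$, so as written it would ``prove'' $\mathrm{SO}(5)\simeq\mathrm{Sp}(4)$. The information that rules this out lives in the finer semiring structure (for $\mathrm{Sp}(4)$ the full orbit $\{\pm e_1,\pm e_2\}$ is the character of an irreducible representation; for $\mathrm{SO}(5)$ it is not, since the weight $0$ must also occur), and your proof never invokes it. The ``Main obstacle'' paragraph addresses a different and milder issue (diagram automorphisms enlarging the stabiliser of the orbit partition) and its proposed fix via the adjoint representation does not close the gap: you would need to show that $f$ matches $[\mathrm{Ad}_G]$ with $[\mathrm{Ad}_{G'}]$, which is itself essentially the content to be proved. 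Chin's actual argument recovers the simple roots and the dominance order from the multiplicative structure of the semiring, using that $\chi_\lambda\chi_\mu$ equals $\chi_{\lambda+\mu}$ plus constituents with strictly smaller highest weight; some input of this kind is unavoidable.

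Two further inaccuracies in ``Reading off the root datum'': the image of $K^+(G)$ in $K^+(T)$ is the $\N$-span of the irreducible characters, which is in general a \emph{proper} subsemiring of $\N[X^*(T)]^W$ (orbit sums are usually not characters of representations --- already for $\mathrm{SL}_2$ the element $e^{\alpha}+e^{-\alpha}$ is not one), so the two ``$\N$-bases'' you describe do not both lie in the image; only the $\Z$-spans agree. This particular slip is repairable and still yields the orbit partition. Less repairable is the recipe ``$\Phi$ is the set of primitive vectors whose associated reflection lies in $W$'': roots need not be primitive in $X^*(T)$ (for $\mathrm{SL}_2$ one has $\alpha=2\omega$), and the reflection determines a root only up to scaling, which is exactly the $B_n/C_n$ ambiguity above.
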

	
		\begin{rema}
		\label{tori-compatible-remark}
		
		The maximal tori that we will use to apply Theorem \ref{Chin-theorem} will be the Frobenius tori provided by Theorem \ref{max-tori:t}. Suppose that $\calE_{0}$ is a coefficient object and for some $\F$-point $x'$, the group $M(\calE_0,x')$ is connected and $\widetilde{T}(\calE_0,x')$ is a split torus over $E$. Then, the group of characters of $\widetilde{T}(\calE_0,x')$ is canonically isomorphic to the subgroup of $E^\times$ generated by the eigenvalues of $F_{x_0'}$. The isomorphism is given by the evaluation of a character at the point $F_{x_0'}^{\sms}$. In particular, if $\calE_0$ sits in an $E$-compatible system $\underline{\calE_0}$ and $\widetilde{T}(\calE_{\lambda, 0},x')$ is split over $E$ for one $\lambda\in \Sigma$ (or equivalently every $\lambda\in \Sigma$), the semirings $K^+(\widetilde{T}(\calE_{\lambda, 0},x'))$ are all canonically isomorphic when $\lambda$ varies in $\Sigma$. Moreover, note that for every $\lambda\in \Sigma$, the semiring $K^+(\widetilde{T}(\calE_{\lambda, 0},x'))$ is canonically isomorphic to $K^+(T(\calE_{\lambda, 0},x'))$.
	\end{rema}

	The known cases of the companions conjecture provide isomorphisms of the Grothendieck semiring of compatible objects. A bit surprisingly, we have these isomorphisms even if we do not dispose at the moment of a general way to construct compatible overconvergent $F$-isocrystals in dimension greater than $1$.

	\begin{prop}
		\label{consequence-companions-proposition}
		Let $\calE_0$ and $\calF_0$ be two $E$-compatible coefficient objects such that all the irreducible objects in $\langle\calE_0\rangle$ and $\langle\calF_0 \rangle$ are absolutely irreducible. There exists an isomorphism of semirings $K^+(\calE_0)\iso K^+(\calF_0)$ which sends the isomorphism class of a semi-simple coefficient object in $\langle\calE_0\rangle$ to the isomorphism class of an $E$-compatible semi-simple coefficient object in $\langle\calF_0\rangle$.
	\end{prop}

	\begin{proof}
		
		Let us first construct the isomorphism when $\calF_0$ is a lisse sheaf. Thanks to Theorem~\ref{companions-smooth-theorem}, there exists a morphism of semirings $f:K^+(\calE_0)\to K^+(\calF_0)$ which sends semi-simple coefficient objects to $E$-compatible semi-simple coefficient objects. By Theorem~\ref{Chebotarev-proposition} this morphism is an injective morphism. Let us show that it is surjective as well. By Corollary~\ref{independence-triviality:c} and the hypothesis, $f$ sends irreducible objects to irreducible objects. Therefore, if $[\calH_0]\in K^+(\calE_0)$ and $\sum_{i=0}^n m_i[\calH_{0}^i]$ is the isotypic decomposition of $[\calH_0]$, then $\sum_{i=0}^n m_i f([\calH_{0}^i])$ is the isotypic decomposition of $f([\calH_0])$. In particular, every summand of a class in the image of $f$ is again in the image of $f$. On the other hand, we know that for every $n,m\in \mathbb{N}$, the classes $\left[\calF_0^{\otimes n} \otimes (\calF_0^{\vee})^{\otimes m}\right]$ are clearly in the image of $f$. Combining these two facts we get the surjectivity. 
		
		If $\calF_0$ is an overconvergent $F$-isocrystals instead, thanks to Theorem~\ref{companions-smooth-theorem}, there exists a compatible lisse sheaf $\calG_0$. The isomorphism $K^+(\calE_0)\iso K^+(\calF_0)$, is then obtained via the composition $$K^+(\calE_0)\iso K^+(\calG_0)\iso K^+(\calF_0).$$
	\end{proof}
	\begin{rema}
		The assumption that the irreducible objects in $\langle\calE_0\rangle$ and $\langle\calF_0 \rangle$ are absolutely irreducible is verified, for example, when $G(\calE_0,x)$ and $G(\calF_0,x)$ are split reductive groups. In particular, it is always possible to obtain this condition after a finite extension of the fields of scalars of the coefficient objects.
	\end{rema}
	
	\subsubsection{Proof of Theorem~\ref{neutral-component-theorem}}
	\label{neutral-for-curves:pf}
	Thanks to Theorem~\ref{Larsen-and-Pink-theorem}, there exists a Galois cover of $X_0$ such that all the arithmetic monodromy groups of the compatible system are connected. By Proposition~\ref{connected-after-cover-proposition}, the neutral components of the monodromy groups remain unchanged when we base change the compatible system to such a cover. By Remark \ref{base-point-r}, after possibly passing to a finite extension of $E$, we may change the $\F$-point $x$ without changing the isomorphism class of the monodromy groups. We may also assume that $\El^{(x_0)}=\El$ for every $\lambda\in \Sigma$. Thanks to Theorem~\ref{max-tori:t}, we may choose $x$ so that $T(\calE_{\lambda, 0},x)$ is a maximal torus in $G(\calE_{\lambda, 0},x)$ for every $\lambda\in \Sigma$. Besides, up to replacing $E$ again with a finite extension, we may assume that $\widetilde{T}(\calE_{\lambda, 0},x)$ is split over $E$ for every $\lambda\in \Sigma$. We fix a finite place $\mu\in \Sigma$. By \cite[Corollaire 1.3]{Dem70}, there exists a connected split reductive group $G_0$ over $E$ such that $G_0\otimes_E E_{\mu}\simeq G(\calE_{\mu,0},x)$. Let $T_0$ be a split maximal torus of $G_0$. We choose $\rho_0: G_0\hookrightarrow \GL_{r,E}$ and $\varphi_{\mu,0}: G_0\otimes_E E_{\mu}\iso G(\calE_{\mu,0},x)$ such that $\varphi_{\mu,0}(T_0\otimes_E E_{\mu})=T(\calE_{\mu,0},x)$ and $\rho_0\otimes_E E_{\mu} \simeq \rho_{\mu,0}\circ \varphi_{\mu,0}$. The isomorphism $\varphi_{\mu,0}$ induces an isomorphism $$\varphi_{\mu,0}^*: K^+(\calE_{\mu,0})\iso K^+(G_0\otimes_E E_{\mu})$$ which sends $[\calE_{\mu,0}]$ to $[\rho_0\otimes_E E_{\mu}]$. As $\widetilde{T}(\calE_{\lambda, 0},x)$ is split over $E$ for every $\lambda\in \Sigma$, the reductive groups $G(\calE_{\lambda, 0},x)$ are all split. By Proposition~\ref{consequence-companions-proposition}, for every $\lambda\in \Sigma$, there exists a unique isomorphism $g_{\lambda,\mu}: K^{+}(\calE_{\lambda, 0})\simeq K^+(\calE_{\mu,0})$ preserving the characteristic polynomials at closed points, hence sending $[\calE_{\lambda, 0}]$ to $[\calE_{\mu,0}]$. Since $G_0$ is split reductive and connected, there exists a canonical isomorphism $h_{\mu,\lambda}:K^+(G_0\otimes_E E_{\mu})\iso K^+(G_0\otimes_E E_{\lambda})$. We take $$f_{\lambda,0}:=h_{\mu,\lambda}\circ \varphi_{\mu,0}^*\circ g_{\lambda,\mu}: K^+(\calE_{\lambda, 0})\iso K^+(G_0\otimes_E E_{\lambda}).$$ By construction, it is compatible with the isomorphism $K^+(T(\calE_{\lambda, 0},x))\simeq K^+(T_0\otimes_E \El)$ induced by $\varphi_{\mu,0}^*$ and the identifications of Remark \ref{tori-compatible-remark}. Thanks to Theorem~\ref{Chin-theorem}, the isomorphism $f_{\lambda,0}$ induces an isomorphism $\varphi_{\lambda,0}: G_{0}\otimes_E E_{\lambda} \iso G(\calE_{\lambda, 0},x)$ such that $f_{\lambda,0}=\varphi_{\lambda,0}^*$. Since $f_{\lambda,0}([\calE_{\lambda, 0}])=[\rho_0\otimes_E \El]$, the representations $\rho_{0} \otimes_E \El$ and $\rho_{\lambda,0}\circ \varphi_{\lambda,0}$ are isomorphic.	
	\qed

	\begin{rema}\label{geometric-neutral-remark}As a consequence of Theorem~\ref{neutral-component-theorem}, we obtain the analogue $\lambda$-independence result for the geometric monodromy groups of the compatible system. Indeed, by Corollary \ref{derived-sub:c}, if $\calE_0$ is a geometrically semi-simple coefficient object, $G(\calE,x)^\circ$ is the derived subgroup of $G(\calE_0,x)^\circ$.
	\end{rema}
	
	\begin{rema}\label{work-drinfeld-remark}
		If we weaken the statement of Theorem~\ref{neutral-component-theorem}, asking that all the isomorphisms between $G_0$ and the monodromy groups are defined over $\Qlbar$, rather than $E_{\lambda}$, one can prove it differently. One can use \cite[Theorem~1.2]{KLV}, a stronger version of Theorem~\ref{Chin-theorem}, in combination with Proposition~\ref{consequence-companions-proposition}. This proof does not use Frobenius tori. 
		
		The author became aware of the theorem of Kazhdan--Larsen--Varshavsky reading \cite{Dri2}. In his paper, Drinfeld uses this result as a starting point to prove the independence of the entire monodromy groups over $\Qlbar$ (not only the neutral components). 
	\end{rema}

	\subsection{Restriction to curves}
	
	\label{Lefschetz-theorem:ss}
	
	In this section, we prove a $\lambda$-independence result for Theorem~\ref{dom-curve-theorem}. We give a proof which exploits the full strength of the Tannakian lemma \cite[Lemma 1.6]{AE}. A similar argument is used in [\textit{ibid.}, Corollary 3.7]. We also need a lemma which relates the arithmetic and the geometric situation.
	
	\begin{lemm}
		\label{isogeom-isoarit-lemma}
		Let $(Y_0,y)\to (X_0,x)$ be a morphism of geometrically connected pointed varieties over $\Fq$ and $\calE_0$ a coefficient object over $X_0$.
		\begin{itemize}
			\item[\normalfont(i)]If the natural morphism $f_*:G(f^*\calE,y)\to G(\calE,x)$ is an isomorphism, the same is true for $f_{0*}:G(f_0^*\calE_0,y)\to G(\calE_0,x)$. 
			\item[\normalfont(ii)] If $\calE_0$ is geometrically semi-simple and $f_{0*}:G(f_0^*\calE_0,y)^\circ\to G(\calE_0,x)^\circ$ is an isomorphism, even $f_*:G(f^*\calE,y)^\circ\to G(\calE,x)^\circ$ is an isomorphism.

		\end{itemize}  
	\end{lemm}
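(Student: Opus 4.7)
Plan. For part (i), I will promote the hypothesis—the equivalence of Tannakian categories $f^{*}\colon\langle\calE\rangle\iso\langle f^{*}\calE\rangle$—to an equivalence $f_{0}^{*}\colon\langle\calE_{0}\rangle\to\langle f_{0}^{*}\calE_{0}\rangle$, which by the Tannakian formalism (in the guise of \cite[Lemma~1.6]{AE}) is equivalent to $f_{0*}$ being an isomorphism. Since $f_{0}^{*}$ sends the tensor generator $\calE_{0}$ to the tensor generator $f_{0}^{*}\calE_{0}$, the morphism $f_{0*}$ is already a closed immersion, so it is enough to verify fully faithfulness of $f_0^*$ and that its essential image is closed under subobjects.

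Fully faithfulness reduces to showing $H^{0}(X_{0},\calH_{0})=H^{0}(Y_{0},f_{0}^{*}\calH_{0})$ for every $\calH_{0}\in\langle\calE_{0}\rangle$. Writing these as the $F$-fixed points of $H^{0}(X^{(x)},\calH)$ and $H^{0}(Y^{(y)},f^{*}\calH)$ respectively, the hypothesis supplies the identification of the ambient spaces, and this identification is $F$-equivariant by naturality of the Frobenius structure on the fibre functor (see Remark~\ref{isomorphism-fibre-funct:r}); passing to $F$-invariants concludes.

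For the essential image being closed under subobjects, consider $\calQ_{0}\subseteq f_{0}^{*}\calN_{0}$ in $\langle f_{0}^{*}\calE_{0}\rangle$ with $\calN_{0}\in\langle\calE_{0}\rangle$. Geometrically, the equivalence $f^{*}$ produces a unique $\calN'\subseteq\calN$ in $\langle\calE\rangle$ with $f^{*}\calN'=\calQ$. Using the canonical identification $F_{Y}^{*}\circ f^{*}=f^{*}\circ F_{X}^{*}$, the Frobenius structure on $\calQ_{0}$ transports uniquely through the geometric equivalence to a Frobenius structure on $\calN'$ compatible with its inclusion into $\calN_{0}$. The resulting subobject $\calN_{0}'\subseteq\calN_{0}$ lies in $\langle\calE_{0}\rangle$ and satisfies $f_{0}^{*}\calN_{0}'=\calQ_{0}$. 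This Frobenius transfer across the equivalence is the main technical step, and it is precisely the point where the cited Tannakian lemma is essential.

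Part (ii) follows from the structural results on geometrically semi-simple coefficient objects. By Corollary~\ref{geom-ss-funct-corollary}, $f_{0}^{*}\calE_{0}$ inherits geometric semi-simplicity from $\calE_{0}$. Corollary~\ref{derived-sub:c} then identifies $G(\calE,x)^{\circ}$ with the derived subgroup $[G(\calE_{0},x)^{\circ},G(\calE_{0},x)^{\circ}]$, and analogously identifies $G(f^{*}\calE,y)^{\circ}$ with the derived subgroup of $G(f_{0}^{*}\calE_{0},y)^{\circ}$. The given isomorphism $f_{0*}\colon G(f_{0}^{*}\calE_{0},y)^{\circ}\iso G(\calE_{0},x)^{\circ}$ restricts to an isomorphism of these derived subgroups, which is exactly the conclusion $f_{*}\colon G(f^{*}\calE,y)^{\circ}\iso G(\calE,x)^{\circ}$.
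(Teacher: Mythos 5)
Your argument is correct, and part (ii) is essentially identical to the paper's (via Corollary~\ref{derived-sub:c}; you helpfully make explicit the appeal to Corollary~\ref{geom-ss-funct-corollary} to see that $f_0^*\calE_0$ is again geometrically semi-simple). Part (i), however, takes a genuinely different route. The paper never touches the functor $f_0^*$ directly: it invokes the functoriality of the fundamental exact sequence of Proposition~\ref{fun-exact-seq:p}.(iii), observes that $f_{0*}\colon G(f_0^*\calE_0,y)^{\cst}\to G(\calE_0,x)^{\cst}$ is automatically surjective because both $\pil(-)^{\cst}$ groups identify with $\pil(\Spec\Fq)$ when the source and target are geometrically connected over $\Fq$, takes the surjectivity on the geometric factor from the hypothesis, and concludes surjectivity in the middle by a short diagram chase; injectivity is the usual closed-immersion fact. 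You instead promote the equivalence of categories $f^*\colon\langle\calE\rangle\iso\langle f^*\calE\rangle$ by hand to an equivalence $f_0^*\colon\langle\calE_0\rangle\to\langle f_0^*\calE_0\rangle$: fully faithfulness by taking $F$-invariants of the geometric $H^0$-identification, closure under subobjects by transporting Frobenius structures across the geometric equivalence. This is a valid re-derivation of what Proposition~\ref{fun-exact-seq:p} already packages, and it costs you about twice the length; the diagram chase is the cleaner path once the fundamental exact sequence is in place. Two small presentation points: the appeal to \cite[Lemma 1.6]{AE} in your first sentence is unnecessary (you only need \cite[Proposition 2.21]{DM}, the standard equivalence between ``$f_0^*$ fully faithful and closed under subobjects'' and ``$f_{0*}$ faithfully flat''); and the $F$-equivariance you need is more directly seen from the fact that $f$ is defined over $\Fq$, so $f^*$ commutes with $F^*$, rather than from Remark~\ref{isomorphism-fibre-funct:r}, which concerns the fibre functor at a point, not the pullback $f^*$.
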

	
	\begin{proof}
		We want to use the functorial diagram of Proposition \ref{fun-exact-seq:p}.(iii) to show that the morphism $f_{0*}$ in (i) is surjective. Since $Y_0$ and $X_0$ are geometrically connected, $f_{0*}: \pil(Y_0,y)^{\cst}\to \pil(X_0,x)^{\cst}$ is an isomorphism, therefore $f_{0*}: G(f^*\calE_0,y)^{\cst}\to G(\calE_0,x)^{\cst}$ is surjective. On the other hand, at the level of geometric monodromy groups, $f_{*}:G(f^*\calE,y)\to G(\calE,x)$ is surjective by assumption. The surjectivity of $f_{0*}:G(f_0^*\calE_0,y)\to G(\calE_0,x)$ is then a consequence of the other two. For (ii) we note that by Corollary \ref{derived-sub:c}, the algebraic groups $G(f^*\calE,y)^\circ$ and $G(\calE,x)^\circ$ are the derived subgroups of $G(f_0^*\calE_0,y)^\circ$ and $G(\calE_0,x)^\circ$ respectively. Thus we get the result.
	\end{proof}
	
	\begin{theo}
		\label{isoindipendent:t}Let $f_0:(Y_0,y)\to (X_0,x)$ be a morphism of smooth geometrically connected pointed varieties. Let $\calE_0$ and $\calF_0$ be compatible  geometrically semi-simple coefficient objects over $X_0$. Let $\varphi_0: G(f_0^*\calE_0,y)\to G(\calE_0,x)$ and $\psi_0:G(f_0^*\calF_0,y)\to G(\calF_0,x)$ be the morphisms induced by $f_0^*$ and let $\varphi$ and $\psi$ be their restriction to the geometric monodromy groups.
		\begin{itemize}
			\item[{(i)}]If $\varphi$ is an isomorphism, the same is true for $\psi$.
			\item[{(ii)}] If $\varphi_0$ is an isomorphism, the same is true for $\psi_0$.
			
		\end{itemize}
		
	\end{theo}
	
	\begin{proof}
		By Lemma \ref{isogeom-isoarit-lemma}, part (i) implies part (ii). Note that $\varphi$ and $\psi$ are always injective, thus to prove part (i) it is enough to prove that if $\varphi$ is surjective, the same is true for $\psi$. Suppose that $\varphi$ is surjective, we want to apply \cite[Lemma 1.6] {AE} to prove that $\psi$ is surjective as well. Since, by Theorem~\ref{rank-1-finite-order:t}, the functor $f^*:\langle \calF \rangle \to \langle f^* \calF \rangle$ satisfies the hypothesis $(\star)$ of the lemma, it remains to show that it is also fully faithful.
		
		A functor of Tannakian categories commuting with fibre functors is always faithful. Therefore, it is enough to prove that $f^*$ preserves the dimensions of the Hom-sets, or equivalently that for every $\calG\in \langle \calF \rangle$ we have 
		\begin{equation}
		\label{equation}
		h^0(\calG)= h^0(f^*\calG),
		\end{equation} 
		\noindent
		where we denote by $h^0$ the dimension of the space of global sections of geometric coefficient objects. 
		
		We proceed by steps. First we prove that for every pair of coefficient objects $\calG', \calG''\in \langle\calF\rangle$, they both satisfy the equality (\ref{equation}) if and only if the same is true for $\calG'\oplus \calG''$. By the additivity of $h^0$, it is clear that if two geometric coefficient objects satisfy the equality individually, then the same is true for their direct sum. Conversely, if $h^0(\calG'\oplus \calG'')=h^0(f^*(\calG'\oplus\calG'')),$ then $$h^0(\calG')-h^0(f^*\calG')+h^0(\calG'')-h^0(f^*\calG'')=0.$$
		Since $f^*$ is faithful, then $h^0(\calG')-h^0(f^*\calG')\leq 0$ and  $h^0(\calG'')-h^0(f^*\calG'')\leq 0$, therefore $h^0(\calG')=h^0(f^*\calG')$ and $h^0(\calG'')=h^0(f^*\calG'')$, as we wanted. In particular, since $\langle \calF \rangle$ is a semi-simple category, we have proven that it is enough to show (\ref{equation}) for the objects of the form $\calF^{\otimes m}\otimes (\calF^{\vee})^{\otimes n}$ with $m,n\in \N$.
		
		We fix $m,n\in \N$. By the hypothesis, the $\otimes$-functor $f^*: \langle \calE \rangle \to \langle f^*\calE \rangle$ is fully faithful, therefore the equality (\ref{equation}) holds for $\calE^{\otimes m}\otimes (\calE^{\vee})^{\otimes n}$. By Corollary \ref{ever-is-mixe:c}, we know that every coefficient object is $\iota$-mixed. Therefore, by Corollary \ref{independence-triviality:c}, we have that $h^0(\calE^{\otimes m}\otimes (\calE^{\vee})^{\otimes n})=h^0(\calF^{\otimes m}\otimes (\calF^{\vee})^{\otimes n})\textrm{ and
		}h^0(f^*(\calE^{\otimes m}\otimes (\calE^{\vee})^{\otimes n}))=h^0(f^*(\calF^{\otimes m}\otimes (\calF^{\vee})^{\otimes n})).$
		Hence we get $h^0(\calF^{\otimes m}\otimes (\calF^{\vee})^{\otimes n})= h^0(f^*(\calF^{\otimes m}\otimes (\calF^{\vee})^{\otimes n})).$ This concludes the proof.
	\end{proof}
	\begin{rema}
		In the proof presented here, we use the known cases of Deligne's conjecture only in the end, in order to prove that $\calE_0$ and $\calF_0$ are $\iota$-mixed. If one already knows that the coefficient objects are $\iota$-mixed, then it is not necessary to invoke Theorem \ref{Deligne-conjecture-general-theorem}. In this case, Theorem \ref{isoindipendent:t} is proven avoiding the results in §\ref{Deligne-conjectures-ss}.
	Alternatively, one could get Theorem \ref{isoindipendent:t} from Theorem \ref{neutral-component-theorem}. This other proof makes substantial use of the material in §\ref{Deligne-conjectures-ss}.
	\end{rema}

	\appendix
	
	\section{Neutral Tannakian categories with Frobenius}
	\label{appendix}
	We introduce in this appendix the notion of \textit{neutral Tannakian categories with Frobenius}, and we present a \textit{fundamental exact sequence} for these categories. This formalism applies to the categories of coefficient objects, as explained in Proposition \ref{F-equiv:p}. We have preferred to work here in a more general setting in order to include some other categories, such as the category of \textit{convergent isocrystals}. 
	\subsection{Definition and Weil group}

	\begin{defi}\label{a-defi}
		A \textit{neutral Tannakian category with Frobenius} is a neutral Tannakian category over some field $\KK$, endowed with a $\KK$-linear $\otimes$-autoequivalence $F^*:\widetilde{\bfC}\to \widetilde{\bfC}$. 
	\end{defi}
	
	\begin{cons}\label{a-TCF:cons}
		Let $(\widetilde{\bfC},F^*)$ be a neutral Tannakian category with Frobenius over some field $\KK$. We denote by $\bfC_0$ the category of pairs $(\calE,\Phi)$, where $\calE\in \widetilde{\bfC}$ and $\Phi$ is an isomorphism between $F^*\calE$ and $\calE$. A morphism between two objects $(\calE,\Phi)$ and $(\calE',\Phi')$ is a morphism $f:\calE\to \calE'$ such the following diagram commutes 
		\begin{equation*}
		\begin{tikzcd}
		F^*\calE \arrow[r,"\Phi"]\arrow[d,"F^*f"] &\calE \arrow[d,"f"]\\
		F^*\calE' \arrow[r,"\Phi'"] &\calE'.\
		\end{tikzcd}
		\end{equation*} 
		Let $\Psi:\bfC_0\to \widetilde{\bfC}$ be the functor which sends $(\calE,\Phi)$ to $\calE$. Write $\bfC$ for the smallest Tannakian subcategory of $\widetilde{\bfC}$ containing the essential image of $\Psi$. Choose a fibre functor $\omega$ of $\widetilde{\bfC}$ over $\KK$. It restricts to a fibre functor of $\bfC$ which we will denote by the same symbol. We write $\omega_0$ for the fibre functor of $\bfC_0$ given by the composition $\omega\circ \Psi$. We define $\pi_1(\bfC,\omega)$ and $\pi_1(\bfC_0,\omega_0)$ as the Tannakian groups of $\bfC$ and $\bfC_0$ with respect to $\omega$ and $\omega_0$ respectively. The functor $\Psi$ induces a closed immersion $\pi_1(\bfC,\omega)\hookrightarrow \pi_1(\bfC_0,\omega_0)$ and for every $\calE_0=(\calE,\Phi)\in \bfC_0$ a closed immersion $G(\calE,\omega)\hookrightarrow G(\calE_0,\omega_0)$.
	\end{cons}
	
	\begin{defi}\label{a-cst:d}
		We say that an object in $\bfC_0$ is \textit{constant} if its image in $\bfC$ is trivial, i.e. isomorphic to $\mathbbm{1}^{\oplus n}$ for some $n\in \N$. The constant objects of $\bfC_0$ form a Tannakian subcategory $\bfC_{\cst}\subseteq\bfC_0$. Let $ \pi_1(\bfC_0,\omega_0)^{\cst}$ be the Tannakian group of $\bfC_{\cst}$ with respect to $\omega_0$. The inclusion $\bfC_{\cst}\subseteq \bfC_0$ induces a faithfully flat morphism $\pi_1(\bfC_0,\omega_0)\twoheadrightarrow \pi_1(\bfC_0,\omega_0)^{\cst}$. For every object $\calE_0\in \bfC_0$, we denote by $G(\calE_0,\omega_0)^{\cst}$ the Tannakian group of the full subcategory $\langle\calE_0\rangle_{\cst}\subseteq\langle\calE_0 \rangle$ of constant objects. This induces a faithfully flat morphism $G(\calE_0,\omega_0)\twoheadrightarrow G(\calE_0,\omega_0)^{\cst}$.
	\end{defi}

	\begin{cons}[Weil group]
		\label{a-Weil:c}
	If $(\widetilde{\bfC},F^*)$ admits an isomorphism of fibre functors $\eta:\omega \Rightarrow \omega\circ F^*$, then the group $\pi_1(\bfC,\omega)$ is endowed with an automorphism $\varphi$ which is constructed in the following way. For every $\KK$-algebra $R$, the automorphism $\varphi$ sends $\alpha\in \pi_1(\bfC,\omega)(R)$ to $\eta_R^{-1} \circ \alpha \circ \eta_R$, where $\eta_R$ is the extension of scalars of $\eta$ from $\KK$ to $R$. Let $W(\bfC_0,\omega_0)$ be the semi-direct product $\pi_1(\bfC,\omega)\rtimes \Z$, as group scheme over $\KK$, where $1\in \Z$ acts on $\pi_1(\bfC,\omega)$ as $\varphi$ acts on $\pi_1(\bfC,\omega)$. We say that $W(\bfC_0,\omega_0)$ is the\textit{ Weil group} of $\bfC_0$.
	\end{cons}
	
	\begin{rema}\label{a-existence-iso:r}
		Thanks to \cite{Del11}, we know that if $\KK$ is algebraically closed, an isomorphism $\eta$ as in §\ref{a-Weil:c} always exists. This is not the case, in general, when $\KK$ is not algebraically closed. For coefficient objects, an isomorphism $\eta$ can be constructed without extending $\KK$, as it is explained in Remark \ref{isomorphism-fibre-funct:r}. 
	\end{rema}
	
	\begin{lemm}\label{a-zar-dense:l}
		Let $(\widetilde{\bfC},F^*)$ be a neutral Tannakian category with Frobenius which admits a fibre functor $\omega$ isomorphic to $\omega\circ F^*$. There exists a natural equivalence of categories $\bfC_0\iso \Rep_{\KK}(W(\bfC_0,\omega_0))$ and a natural morphism $\iota:W(\bfC_0,\omega_0)\to \pi_1(\bfC_0,\omega_0)$ such that the following diagram commutes
		\begin{center}
			\begin{tikzcd}[row sep=
				tiny, column sep=huge]
				& \Rep_{\KK}(\pi_1(\bfC_0,\omega_0)) \arrow[dd,"\iota^*"]\\
				\bfC_0 \arrow{ur}{\sim} \arrow{dr}[swap]{\sim} &              \\
				& \Rep_{\KK}(W(\bfC_0,\omega_0)),
			\end{tikzcd}
		\end{center}
		where the equivalence $\bfC_0\iso \Rep_{\KK}(\pi_1(\bfC_0,\omega_0))$ is the one induced by the fibre functor $\omega_0$.
		In addition, the image of $\iota$ is Zariski-dense in $\pi_1(\bfC_0,\omega_0)$. 
	\end{lemm}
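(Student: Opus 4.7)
My plan is to build the equivalence $\bfC_0 \iso \Rep_{\KK}(W(\bfC_0,\omega_0))$ directly from the data, then read off the morphism $\iota$ and its density from the universal property of $\pi_1(\bfC_0,\omega_0)$.

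First, given $(\calE,\Phi)\in\bfC_0$, set $V:=\omega(\calE)$. The Tannakian equivalence $\bfC\iso \Rep_\KK(\pi_1(\bfC,\omega))$ gives $V$ the structure of a $\pi_1(\bfC,\omega)$-representation. I define the $\Z$-action on $V$ as generated by $F_\calE:=\omega(\Phi)\circ \eta_\calE: V\to V$, where $\eta_\calE:\omega(\calE)\iso \omega(F^*\calE)$ is the component of the chosen isomorphism $\eta:\omega\Rightarrow\omega\circ F^*$. Compatibility with the semidirect product structure amounts to checking that for every $g\in\pi_1(\bfC,\omega)(R)$ one has $F_\calE\cdot g\cdot F_\calE^{-1}=\varphi(g)$ as automorphisms of $V\otimes_\KK R$; this follows from the naturality of $\eta_R$ combined with the fact that $\omega(\Phi)$ intertwines the $\pi_1(\bfC,\omega)$-action on $\omega(F^*\calE)$ with the one on $\omega(\calE)$, which is exactly how $\varphi$ was defined in §\ref{a-Weil:d}. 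Functoriality in $(\calE,\Phi)$ is clear from the definition, and a quasi-inverse is built by using the $\pi_1(\bfC,\omega)$-part to produce the underlying object of $\bfC\subseteq\widetilde{\bfC}$ and then transporting the action of $1\in\Z$ through $\eta$ to produce a Frobenius structure $\Phi$.

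To construct $\iota$, I use that the assignment $(\calE,\Phi)\mapsto F_\calE\in\Aut(\omega_0(\calE,\Phi))$ is functorial, compatible with tensor products (since $\eta$ is an isomorphism of fibre $\otimes$-functors and $\Phi\otimes\Phi'$ is the Frobenius structure on the tensor product), and compatible with duals. It therefore defines an element $F\in\underline{\Aut}^{\otimes}(\omega_0)(\KK)=\pi_1(\bfC_0,\omega_0)(\KK)$; sending $1\in\Z$ to $F$ gives $\Z\to\pi_1(\bfC_0,\omega_0)$. Combining this with the closed immersion $\pi_1(\bfC,\omega)\hookrightarrow\pi_1(\bfC_0,\omega_0)$ from Construction \ref{a-TCF:cons} and verifying the semidirect product relation $F\cdot g\cdot F^{-1}=\varphi(g)$ inside $\pi_1(\bfC_0,\omega_0)$ (which holds because both sides induce the same automorphism of $\omega(\calE)$ for every $(\calE,\Phi)$), I obtain a morphism of group schemes $\iota:W(\bfC_0,\omega_0)\to\pi_1(\bfC_0,\omega_0)$. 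The commutativity of the triangle is then a matter of unwinding definitions: under the Tannakian equivalence, an object $(\calE,\Phi)$ is sent to $\omega(\calE)$ with the $\pi_1(\bfC_0,\omega_0)$-action, and $\iota^*$ restricts this along $\pi_1(\bfC,\omega)\hookrightarrow\pi_1(\bfC_0,\omega_0)$ and along $F$, recovering exactly the $W$-action built above.

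The one subtle point is Zariski-density of $\iota$, which by \cite[Proposition 2.21]{DM} is equivalent to showing that the restriction functor $\iota^*:\Rep_\KK(\pi_1(\bfC_0,\omega_0))\to\Rep_\KK(W(\bfC_0,\omega_0))$ is fully faithful and closed under subobjects. Both follow from the equivalence in the previous paragraph: if $V\subseteq \iota^*\omega_0(\calE,\Phi)$ is a $W$-stable subspace, then $V$ is stable under $\pi_1(\bfC,\omega)$, hence corresponds via the Tannakian equivalence for $\bfC$ to a subobject $\calE'\subseteq\calE$; stability under $F_\calE=\omega(\Phi)\circ\eta_\calE$ means that $\Phi$ restricts to an isomorphism $F^*\calE'\iso\calE'$, producing a subobject $(\calE',\Phi|_{F^*\calE'})\subseteq(\calE,\Phi)$ in $\bfC_0$ whose image under $\iota^*\omega_0$ is $V$. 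The same argument gives fully faithfulness on Hom-sets. I expect the only mildly technical verification to be the naturality identity relating $F_\calE$, $\eta$, and the action of $\pi_1(\bfC,\omega)$; once that is in place, everything else is bookkeeping.
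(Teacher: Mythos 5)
Your construction of the equivalence $\bfC_0\iso\Rep_\KK(W(\bfC_0,\omega_0))$ — extending the $\pi_1(\bfC,\omega)$-action on $\omega(\calE)$ by letting $(e,1)$ act via $F_\calE := \omega(\Phi)\circ\eta_\calE$ — is exactly what the paper does, and your explicit verification of naturality (via the intertwining identity $\Phi'\circ F^*f = f\circ\Phi$) and of the semidirect relation is the honest content behind the paper's terser presentation. Your construction of $\iota$ by exhibiting $F\in\pi_1(\bfC_0,\omega_0)(\KK)$ directly is a bit more hands-on than the paper's (which lets the Tannakian formalism produce $\iota$ from the commutative triangle), but both routes amount to the same thing.

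For Zariski-density the two arguments are again the same in substance: you observe that $\iota^*$ is fully faithful and closed under subobjects, while the paper phrases this as $\pi_1(\bfC_0,\omega_0)$ being the pro-algebraic completion of $W(\bfC_0,\omega_0)$. Either way the underlying point is that $\iota^*$ is an equivalence, which is exactly what the triangle says, so the density follows by Tannaka reconstruction. One small caveat: \cite[Proposition 2.21]{DM} as stated is a criterion for a morphism of \emph{affine} group schemes to be faithfully flat, but $W(\bfC_0,\omega_0)=\pi_1(\bfC,\omega)\rtimes\Z$ is not affine (the constant group scheme $\Z$ is not affine). The argument still works — the schematic image of $\iota$ is a closed subgroup $I\subseteq\pi_1(\bfC_0,\omega_0)$ through which $\iota$ factors, and if $I$ were proper then $\iota^*$ would fail to be fully faithful or fail to be closed under subobjects — but it is cleaner to say, as the paper does, that $\iota$ is the canonical Tannaka-reconstruction map $W\to\underline{\Aut}^\otimes(\omega^{\mathrm{forg}})$, which always has Zariski-dense image.

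One more remark on your explicit check of the "closed under subobjects" property: given the equivalence you established, this is automatic, so the passage where you trace a $W$-stable subspace back through $\bfC$ and restrict $\Phi$ is redundant rather than wrong. Still, it is a useful sanity check that the equivalence does respect subobjects, and it recovers, in the form you need, the same calculation the paper performs while verifying condition (b) of Theorem A.10 in Proposition A.11.
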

	\begin{proof}
		For every $(\calE,\Phi)\in \bfC_0$, we extend the natural representation of $\pi_1(\bfC,\omega)$ on the vector space $\omega(\calE)$ to a representation of $W(\bfC_0,\omega_0)$. Write $e$ for the identity point in $\pi_1(\bfC,\omega)(\KK)$. We impose that $(e,1)\in W(\bfC_0,\omega_0)(\KK)$ acts on $\omega(\calE)$ via $\omega(\Phi)\circ \eta_{\calE}$, where $\eta_{\calE}$ is the isomorphism induced by $\eta$ between $\omega(\calE)$ and $\omega(F^*\calE)$. This defines an equivalence $\bfC_0\iso \Rep_{\KK}(W(\bfC_0,\omega_0))$ and a morphism $\iota: W(\bfC_0,\omega_0)\to \pi_1(\bfC_0,\omega_0)$ satisfying the required properties. By the Tannaka reconstruction theorem, the affine group $\pi_1(\bfC_0,\omega)$ is the pro-algebraic completion of $W(\bfC_0,\omega_0)$, thus the image of $\iota$ is Zariski-dense in $\pi_1(\bfC_0,\omega)$.
	\end{proof}

	\subsection{The fundamental exact sequence}
	\label{a-fundamental-exact-sequence:ss}

	\begin{lemm}\label{a-normal:l}
		Let $(\widetilde{\bfC},F^*)$ be a neutral Tannakian category with Frobenius and let $\omega$ be a fibre functor of $\widetilde{\bfC}$. The subgroup $\pi_1(\bfC,\omega)\subseteq \pi_1(\bfC_0,\omega_0)$ is a normal subgroup. In particular, for every $\calF\in \bfC$ there exists $\calG_0\in \bfC_0$ such that $\calF\subseteq \Psi(\calG_0)$.
	\end{lemm}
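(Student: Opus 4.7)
The plan is to handle the two assertions separately: normality via a standard Tannakian criterion, and the embedding via an $F^*$-orbit construction. First I would note that $F^*$ restricts to a $\otimes$-autoequivalence of $\bfC$: the essential image of $\Psi$ is stable under $F^*$ (since $(F^*\calE,F^*\Phi)\in\bfC_0$ whenever $(\calE,\Phi)\in\bfC_0$), the Tannakian operations commute with $F^*$, and thus the subcategory of $\widetilde{\bfC}$ generated by $\Psi(\bfC_0)$ is $F^*$-stable. In particular $F^*$ preserves the unit and hence sends trivial objects to trivial objects.

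For normality I would use the Tannakian criterion that a closed subgroup $H\subseteq G$ is normal iff, for every $V\in\Rep_\KK(G)$, the largest $H$-trivial subobject of $V|_H$ is actually a sub-$G$-representation of $V$. Translated to our setting, one must show that for every $(\calE,\Phi)\in\bfC_0$, the maximal trivial subobject $M\subseteq\calE$ in $\bfC$ (equivalently in $\widetilde{\bfC}$, since triviality is absolute) is $\Phi$-stable. Because $F^*$ is an exact $\otimes$-autoequivalence preserving triviality, it carries the maximal trivial subobject of $\calE$ to the maximal trivial subobject of $F^*\calE$, which is $F^*M$. The isomorphism $\Phi\colon F^*\calE\iso\calE$ must therefore send $F^*M$ onto $M$, so $\Phi$ restricts to an isomorphism $F^*M\iso M$, producing a subobject $(M,\Phi|_{F^*M})\subseteq(\calE,\Phi)$ in $\bfC_0$ whose image under $\Psi$ is $M$. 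This confirms normality.

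For the ``in particular'' part, given $\calF\in\bfC$, the fact that the essential image of $\Psi$ is closed under $\otimes$ and duals shows that $\bfC$ is the full subcategory of subquotients of objects $\Psi(\calE_0)$ with $\calE_0\in\bfC_0$, so fix such an $\calE_0$ for which $\calF$ is a subquotient of $\Psi(\calE_0)$. The Frobenius $\Phi$ gives $F^*\Psi(\calE_0)\iso\Psi(\calE_0)$, so $F^*$ permutes the isomorphism classes of subquotients of $\Psi(\calE_0)$; using that the set of such isomorphism classes is finite I would conclude that the $F^*$-orbit $\{[(F^*)^i\calF]\}_{i\geq 0}$ is finite of some size $n$. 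Setting $\calG:=\bigoplus_{i=0}^{n-1}(F^*)^i\calF$, the cyclic shift yields a Frobenius isomorphism $F^*\calG\iso\calG$, so there exists $\calG_0\in\bfC_0$ with $\Psi(\calG_0)=\calG$, and $\calF$ embeds into $\calG$ as the $i=0$ direct summand.

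\textbf{The main obstacle} I anticipate is justifying the finiteness of the $F^*$-orbit of $[\calF]$ in the set of isomorphism classes of subquotients of $\Psi(\calE_0)$. In the semi-simple case this is immediate from Krull--Schmidt applied to subsums of multiplicities, but in the general non-semi-simple Tannakian setting one has to control extension classes of Jordan--Hölder constituents. I would reduce first to the case of simple $\calF$ (where the finiteness of the orbit follows from the finite length of $\Psi(\calE_0)$) and then bootstrap via induction on length, using that once each simple constituent is embedded in a $\Psi(\calG_0^{(i)})$, suitable extensions and products of the $\calG_0^{(i)}$ accommodate the whole of $\calF$.
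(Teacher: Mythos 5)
Your normality argument is correct and genuinely different from the paper's. The paper first extends scalars to $\overline{\KK}$ so that an isomorphism $\omega\Rightarrow\omega\circ F^*$ is available, then builds the Weil group $W(\bfC_0,\omega_0)$ and observes that $\pi_1(\bfC,\omega)$ is normalized by the Zariski-dense image of $W(\bfC_0,\omega_0)$ inside $\pi_1(\bfC_0,\omega_0)$. You instead work directly over $\KK$ with the criterion that a closed $H\subseteq G$ is normal if and only if $V^H$ is $G$-stable for every $V\in\Rep_\KK(G)$, and verify it by noting that $F^*$, being an exact $\otimes$-autoequivalence preserving the unit, carries the maximal trivial subobject $M\subseteq\calE$ to that of $F^*\calE$, so $\Phi(F^*M)=M$ and $(M,\Phi|_{F^*M})$ is a subobject of $(\calE,\Phi)$ in $\bfC_0$. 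This is sound and avoids both the base change and the construction of the Weil group; the criterion you invoke is valid (the nonobvious implication follows by applying the hypothesis to the regular representation $\KK[G]$, exhausted by its finite-dimensional $G$-subrepresentations, from which $G$-stability of $\KK[G]^H$ forces normality).

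The argument for the \emph{in particular} clause has a gap which you flag but do not close. Your orbit-sum construction works when $\calF$ is simple, since $F^*$ permutes the finitely many Jordan--H\"older classes of $\Psi(\calE_0)$. For general $\calF$, however, the set of isomorphism classes of subquotients of a finite-length object can be infinite: for example a length-three unipotent representation of $\Ga\times\Ga$ admits a $\mathbb{P}^1$-family of pairwise nonisomorphic length-two subobjects, and one can choose $F^*$ so that $F^*\Psi(\calE_0)\simeq\Psi(\calE_0)$ while $F^*$ shifts the parameter by a non-torsion automorphism, giving an infinite $F^*$-orbit. Your proposed repair by induction on length via ``suitable extensions and products'' is not substantiated: even once the Jordan--H\"older constituents of $\calF$ embed into objects $\Psi(\calG_0^{(i)})$, there is no evident construction of an ambient object in $\bfC_0$ for the extension $\calF$ itself, because extensions of objects of $\bfC_0$ need not carry a Frobenius structure, and the relevant extension classes need not lift along the chosen embeddings.

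There is, in any case, no need to prove the two assertions separately, because the second is a formal consequence of the first. Once $\pi_1(\bfC,\omega)$ is normal in $\pi_1(\bfC_0,\omega_0)$, the sequence $1\to\pi_1(\bfC,\omega)\to\pi_1(\bfC_0,\omega_0)\to\pi_1(\bfC_0,\omega_0)/\pi_1(\bfC,\omega)\to 1$ is exact with faithfully flat right-hand map, so condition (c) of Theorem~\ref{a-EHS:t} applies; that condition is precisely the statement that every $\calF\in\bfC$ is a subobject of some $\Psi(\calG_0)$. This is the paper's one-line deduction, and your normality proof already yields the whole lemma once you invoke it.
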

	
	\proof
	Thanks to Theorem \cite[Theorem A.1]{EHS}, the second part of the statement follows from the first one. We may verify that the subgroup is normal after extending the field $\KK$ to its algebraic closure. Under the additional assumption that $\KK$ is algebraically closed, by Remark \ref{a-existence-iso:r} there exists an isomorphism between $\omega$ and $\omega\circ F^*$, so that we can construct the Weil group $W(\bfC_0,\omega_0)$ as explained in §\ref{a-Weil:c}. By Lemma \ref{a-zar-dense:l}, the group scheme $W(\bfC_0,\omega_0)$ is endowed with a natural morphism $\iota: W(\bfC_0,\omega_0)\to \pi_1(\bfC_0,\omega_0)$ with Zariski-dense image. Let $H$ be the normaliser of $\pi_1(\bfC,\omega)$ in $\pi_1(\bfC_0,\omega_0)$. The group $\pi_1(\bfC,\omega)$ is normal in $W(\bfC_0,\omega_0)$, hence the $\KK$-point $(e,1)\in W(\bfC_0,\omega_0)(\KK)$ normalizes $\pi_1(\bfC,\omega)$. As a consequence, $\iota(e,1)\in \pi_1(\bfC_0,\omega_0)(\KK)$ is contained in $H(\KK)$. The group $W(\bfC_0,\omega_0)$ is generated by $\pi_1(\bfC,\omega)$ and $(e,1)$, thus the image of $\iota$ is contained in $H$. This implies that $H=\pi_1(\bfC_0,\omega_0)$, which shows that $\pi_1(\bfC,\omega)$ is normal in $\pi_1(\bfC_0,\omega_0)$, as we wanted.
	
	\qed
	
	\begin{theo}\
		\label{a-fundamental-exact-sequence:t}Let $(\widetilde{\bfC},F^*)$ be a neutral Tannakian category over $\KK$ with Frobenius and let $\omega$ be a fibre functor of $\widetilde{\bfC}$. The following statements hold.
		\begin{itemize}
			\item[{(i)}]
			The morphisms constructed in §\ref{a-TCF:cons} and §\ref{a-cst:d} form an exact sequence 
			$$1\to\pi_1(\bfC,\omega)\to\pi_1(\bfC_0,\omega_0) \to \pi_1(\bfC_0,\omega_0)^{\cst} \to 1.$$
			
			\item[{(ii)}]
			For every $\calE_0=(\calE,\Phi)\in\bfC_0$ and every $\calF\in \langle\calE\rangle$, there exists $\calG_0\in \langle \calE_0\rangle$ such that $\calF\subseteq \Psi(\calG_0)$.
			\item[{(iii)}]
			For every object $\calE_0=(\calE,\Phi)\in \bfC_0$, the exact sequence of (i) sits in a commutative diagram with exact rows
			
			\begin{equation*}
			\begin{tikzcd}
			\label{a-fundamental-diagram}
			1 \arrow{r}& \pi_1(\bfC,\omega) \arrow{r}\arrow[d,two heads] &\pi_1(\bfC_0,\omega_0) \arrow{r}\arrow[d,two heads]& \pi_1(\bfC_0,\omega_0)^{\cst} \arrow{r}\arrow[d,two heads]& 1\\
			1\arrow{r} & G(\calE,x) \arrow{r} & G(\calE_0,x)  \arrow{r} & G(\calE_0,x)^{\cst}\arrow{r} & 1,\
			\end{tikzcd}
			\end{equation*}
			where the vertical arrows are the natural quotients.

			\item[{(iv)}] The affine group scheme $\pi_1(\bfC_0,\omega_0)^{\cst}$ is isomorphic to the pro-algebraic completion of $\Z$ over $\KK$ and $G(\calE_0,x)^{\cst}$ is a commutative algebraic group. 
		\end{itemize}
	\end{theo}
	
	\begin{proof}
		We already know that the sequence of part (i) is exact on the left and on the right. It remains to show the exactness in the middle using Theorem \cite[Theorem A.1]{EHS}. Condition (a) is satisfied by construction. For condition (b) we notice that a $\otimes$-functor sends trivial objects to trivial objects. Therefore, for every $(\calE,\Phi)\in \bfC_0$, the maximal trivial subobject $\calF\subseteq \calE$ is sent by $F^*$ to the maximal trivial subobject of $F^*(\calE)$. This means that the restriction of $\Phi$ to $F^*(\calF)$ defines an isomorphism between $F^*(\calF)$ and $\calF$ that we denote by $\Phi|_{\calF}$. The pair $(\calF,\Phi|_{\calF})$ is the subobject of $(\calE,\Phi)$ with the desired property. Condition (c) is proven in Lemma \ref{a-normal:l}.
		
		For part (ii) we notice that the subgroup $G(\calE,\omega) \subseteq G(\calE_0,\omega_0)$ is a quotient of $\pi_1(\bfC,\omega)\subseteq\pi_1(\bfC_0,\omega_0)$, thus it is normal. By Theorem \cite[Theorem A.1]{EHS}, this implies the desired result. The diagram of part (iii) is obtained by taking the natural morphisms of the Tannakian groups. To prove that the lower sequence is exact we proceed as in part (i), replacing Lemma \ref{a-normal:l} with part (ii). Finally, the category $\bfC_{\cst}$ is equivalent to $\Rep_{\KK}(\Z)$, thus $\pi_1(\bfC_0,\omega_0)^{\cst}$ is isomorphic to the pro-algebraic completion of $\Z$ over $\KK$. In particular, for every $\calE_0\in \bfC_0$, the algebraic group $G(\calE_0,\omega_0)^{\cst}$, being a quotient of $\pi_1(\bfC_0,\omega_0)^{\cst}$, is commutative. This concludes the proof.
	\end{proof}

	\bibliographystyle{ams-alpha}

\end{document}